\theoremstyle{plain}
\newtheorem{definition}{Definition}[section]
\newtheorem{thm}[definition]{Theorem}
\newtheorem{lem}[definition]{Lemma}
\newtheorem{cor}[definition]{Corollary}
\newtheorem{rem}[definition]{Remark}
\title{Integer group determinants for ${\rm C}_{4}^{2}$}
\author{Yuka Yamaguchi and Naoya Yamaguchi}
\date{\today}
\begin{document}

\maketitle

\begin{abstract}
We determine all possible values of the integer group determinant of ${\rm C}_{4}^{2}$, 
where ${\rm C}_{4}$ is the cyclic group of order $4$. 
\end{abstract}

\section{Introduction}
Recall that for a finite group $G$, 
assigning a variable $x_{g}$ for each $g \in G$, 
the group determinant of $G$ is defined as $\det{\left( x_{g h^{- 1}} \right)}_{g, h \in G}$. 
The group determinant is called an integer group determinant of $G$ when the all variables $x_{g}$ are integers. 
We denote the set of all possible values of the integer group determinant of $G$ by $S(G)$; 
That is, 
$$
S(G) := \left\{ \det{\left( x_{g h^{- 1}} \right)}_{g, h \in G} \mid x_{g} \in \mathbb{Z} \right\}. 
$$
For every group $G$ of order at most $15$, 
$S(G)$ was determined (see \cite{MR4363104, MR4056860}). 
Let ${\rm C}_{n}$ be the cyclic group of order $n$ and ${\rm D}_{n}$ be the dihedral group of order $n$. 
For the groups of order $16$, 
the complete descriptions of $S(G)$ were obtained for ${\rm D}_{16}$ \cite[Theorem~5.3]{MR3879399}, ${\rm C}_{16}$ \cite{Yamaguchi}, 
${\rm C}_{2}^{4}$ \cite{https://doi.org/10.48550/arxiv.2209.12446} and ${\rm C}_{8} \times {\rm C}_{2}$ \cite[Theorem~1.5]{https://doi.org/10.48550/arxiv.2203.14420}. 
In this paper,  we determine $S \left( {\rm C}_{4}^{2} \right)$. 
\begin{thm}\label{thm:1.1}
Let $P := \left\{ p \mid p \equiv - 3  \: \: ({\rm mod} \: 8) \: \: \text{is a prime number} \right\}$ and 
\begin{align*}
A &:= \left\{ (8 k - 3) (8 l - 3) (8 m - 3) (8 n - 3) \right. \mid \\ 
&\qquad \qquad \left. k \in \mathbb{Z}, \: 8 l - 3, 8 m - 3, 8 n - 3 \in P, \: k + l \not\equiv m + n \: \: ({\rm mod} \: 2) \right\} \\ 
&\subsetneq \left\{ 16 m - 7 \mid m \in \mathbb{Z} \right\}. 
\end{align*}
Then we have 
\begin{align*}
S \left( {\rm C}_{4}^{2} \right) &= \left\{ 16 m + 1, \: 2^{15} p (2 m + 1), \: 2^{16} m \mid m \in \mathbb{Z}, \: p \in P \right\} \cup A. 
\end{align*}
\end{thm}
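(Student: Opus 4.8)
The plan is to exploit the character factorization of the group determinant. Writing $\omega:=\sqrt{-1}$ and $f(X,Y):=\sum_{0\le s,t\le 3}x_{(s,t)}X^{s}Y^{t}$, the group determinant of $C_{4}^{2}$ equals $\prod_{0\le a,b\le 3}f(\omega^{a},\omega^{b})$. The $16$ factors split into the four with $a,b\in\{0,2\}$, which are the rational integers $f(\pm 1,\pm 1)$, and the twelve of order $4$ (those with $a$ or $b$ odd), which occur in six complex-conjugate pairs, each pair contributing a factor $N(z_{k}):=z_{k}\overline{z_{k}}\in\mathbb{Z}_{\ge 0}$ for some $z_{k}\in\mathbb{Z}[\omega]$. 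Hence every element of $S(C_{4}^{2})$ is of the form
\[
f(1,1)\,f(1,-1)\,f(-1,1)\,f(-1,-1)\,\prod_{k=1}^{6}N(z_{k}),
\]
and, conversely, a tuple $(v_{1},v_{2},v_{3},v_{4};z_{1},\dots,z_{6})\in\mathbb{Z}^{4}\times\mathbb{Z}[\omega]^{6}$ is realized by an integer vector $(x_{g})_{g\in C_{4}^{2}}$ exactly when it lies in the image lattice $\Lambda$, a full-rank sublattice of $2$-power index. So the first job is to describe $\Lambda$ explicitly: reduction modulo $1+\omega$ shows that the ten factors share a common parity, the Walsh--Hadamard relations among $f(\pm1,\pm1)$ and among conjugate blocks such as $f(\omega,1),f(\omega,\omega),f(\omega,-1),f(\omega,-\omega)$ force the appropriate $\mathbb{Z}$- and $\mathbb{Z}[\omega]$-linear combinations to be divisible by $4$, and refining these modulo $(1+\omega)^{2}$ and $(1+\omega)^{3}$ yields a complete system of congruences cutting out $\Lambda$.

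For the inclusion ``$\subseteq$'' I would split on $v_{2}(D)$. If $D$ is odd, all ten factors are odd; since a norm $N(a+b\omega)$ with $a+b$ odd is $\equiv 1$ or $5\pmod{8}$, combining this with the congruences defining $\Lambda$ and computing modulo $8$ gives $D\equiv 1\pmod{8}$ (note that the $C_{2}^{2}$ group determinant $f(1,1)f(1,-1)f(-1,1)f(-1,-1)$ by itself is only forced to be $\equiv 1\pmod{4}$, so this step genuinely uses the coupling between the rational factors and the norm factors). Refining modulo $16$ then shows that $D\equiv 9\pmod{16}$ can occur only when $D$ has the multiplicative shape displayed in $A$ --- in particular at least three of the prime factors of $D$ are $\equiv 5\pmod{8}$ (counted with multiplicity), which falls out of the detailed structure of $\Lambda$ modulo $16$ --- and the condition $j\not\equiv k+l+m+n\pmod{2}$ turns out to be exactly the statement ``$D\equiv 9\pmod{16}$''; this last point also gives $A\subsetneq\{16m-7\mid m\in\mathbb{Z}\}$, since every element of $A$ has absolute value at least $3\cdot 5^{3}$. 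If $D$ is even, all ten factors are even; comparing the $(1+\omega)$-adic valuations of the twelve order-$4$ evaluations against the behaviour of the $C_{2}^{2}$ group determinant $f(1,1)f(1,-1)f(-1,1)f(-1,-1)$ pushes $v_{2}(D)\ge 15$ (so no even value with $1\le v_{2}(D)\le 14$ appears), and in the extremal case $v_{2}(D)=15$ the same $\mathbb{Z}[\omega]$-arithmetic forces a rational prime $\equiv 5\pmod{8}$ dividing the odd part, i.e.\ $D\in\{2^{15}p(2m+1)\mid m\in\mathbb{Z},\ p\in P\}$; all remaining even values lie in $\{2^{16}m\mid m\in\mathbb{Z}\}$.

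For ``$\supseteq$'' I would give explicit constructions. The values $16m+1$ are immediate: taking $x_{e}=1+m$ and $x_{g}=m$ for $g\ne e$ makes $f(1,1)=1+16m$ while every other factor equals $1$. The multiples of $2^{16}$ and the values $2^{15}p(2m+1)$ are obtained by prescribing the ten factors directly --- choosing the $z_{k}$ to be Gaussian integers divisible by suitable powers of $1+\omega$, and for the prime $p\equiv 1\pmod{4}$ taking one $z_{k}$ to be a Gaussian prime above $p$ --- and then checking that the prescribed tuple lies in $\Lambda$; by the Chinese Remainder Theorem this check decouples into a $2$-part and an odd part. The set $A$ is handled in the same way: one uses Gaussian primes above the primes $8l-3,8m-3,8n-3$ as three of the norm factors, distributes the remaining factor $(8j+1)(8k-3)$ among the other factors, and solves the congruences of $\Lambda$; the parity condition is precisely what makes this system solvable (and keeps the product in $\{16m-7\mid m\in\mathbb{Z}\}$).

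The main obstacle, as is typical for this circle of problems, is the sharp end of both directions: on the necessity side, showing that the congruences defining $\Lambda$ genuinely force ``at least three prime factors $\equiv 5\pmod{8}$'' in the $\equiv 9\pmod{16}$ case, together with the jump of $v_{2}(D)$ directly from $0$ to $15$; on the sufficiency side, showing that the resulting simultaneous congruences in $\mathbb{Z}[\omega]$ can always be solved while still hitting the prescribed norms. I expect the bulk of the work to be exactly this case-by-case analysis of the lattice $\Lambda$.
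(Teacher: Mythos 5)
Your overall strategy is the same one the paper uses: the character (Fourier) factorization of the $C_{4}^{2}$ determinant into four rational linear factors and six Gaussian norms, a $2$-adic/congruence analysis of which tuples of factor values are realizable to get the exclusions, and explicit constructions for the converse. (The paper packages the same sixteen characters as $D_{4}(\bm{b})D_{4}(\bm{c})\beta\overline{\beta}\gamma\overline{\gamma}$ and works with the integer parameters $b_{i},c_{i},d_{i}$ instead of an abstract image lattice $\Lambda$, but that is a difference of bookkeeping, not of method.)

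The problem is that, as written, your text is a plan rather than a proof: every step that carries the actual content of the theorem is asserted, not established. You never write down the congruences that cut out $\Lambda$ (the analogue of the paper's Remark~2.4 and Lemmas~2.5--2.10, 3.1--3.2), so nothing is available to ``compute modulo $8$'' or ``refine modulo $16$'' with. The decisive necessity claims --- that an odd determinant $\equiv 9 \pmod{16}$ forces at least three prime factors $\equiv 5 \pmod 8$ (which in the paper is the coupled case analysis of Lemmas~4.2--4.7, and which also needs the two-squares theorem, Corollary~2.11, to pass from a norm being $\equiv 5 \pmod 8$ to it having a prime factor in $P$), that $v_{2}(D)$ jumps from $0$ directly to $15$, and that $v_{2}(D)=15$ forces a prime factor in $P$ (Lemmas~5.2--5.6) --- are stated only as expected outcomes of ``the detailed structure of $\Lambda$.'' Likewise, on the sufficiency side you give an explicit vector only for $16m+1$; for $A$, $2^{15}p(2m+1)$ and $2^{16}m$ you claim the congruence system ``can be solved,'' precisely the point that requires the delicate explicit choices of the paper's Lemmas~6.1--6.3 (e.g.\ representations $16l+5=(8r+1)^{2}+(8s+2)^{2}$, $16l-3=(8r+3)^{2}+(8s+2)^{2}$ fed into a parametrization making $\beta\overline{\beta}\gamma\overline{\gamma}$ the product of two prescribed primes of $P$). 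Since you yourself identify these points as ``the main obstacle,'' the proposal leaves the theorem unproved: the missing parts are not routine verifications but the entire case analysis that constitutes the paper.
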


There are fourteen groups of order $16$ up to isomorphism \cite{MR1510814, MR1505615}, 
and five of them are abelian. 
Our result leaves ${\rm C}_{4} \times {\rm C}_{2}^{2}$ as the only abelian group of order $16$ for which $S(G)$ has not been determined 
(shortly thereafter, the integer group determinants have been determined for ${\rm C}_{4} \times {\rm C}_{2}^{2}$ \cite{https://doi.org/10.48550/arxiv.2211.14761}, ${\rm D}_{8} \times {\rm C}_{2}$, ${\rm Q}_{8} \times {\rm C}_{2}$ \cite[Theorems~3.1 and 4.1]{https://doi.org/10.48550/arxiv.2211.09930}, ${\rm Q}_{16}$ \cite{https://doi.org/10.48550/arxiv.2302.11688} and ${\rm C}_{2}^{2} \rtimes {\rm C}_{4}$ \cite{yamaguchi2023integer}, where ${\rm Q}_{n}$ denotes the generalized quaternion group of order $n$).

\section{Preliminaries}
For any $\overline{r} \in {\rm C}_{n}$ with $r \in \{ 0, 1, \ldots, n - 1 \}$, 
we denote the variable $x_{\overline{r}}$ by $x_{r}$, 
and let $D_{n}(x_{0}, x_{1}, \ldots, x_{n - 1}) := \det{\left( x_{g h^{- 1}} \right)}_{g, h \in {\rm C}_{n}}$. 
For any $(\overline{r}, \overline{s}) \in {\rm C}_{4}^{2}$ with $r, s \in \{ 0, 1, 2, 3 \}$, 
we denote the variable $y_{(\overline{r}, \overline{s})}$ by $y_{j}$, where $j := r + 4 s$, 
and let $D_{4 \times 4}(y_{0}, y_{1}, \ldots, y_{15}) := \det{\left( y_{g h^{- 1}} \right)}_{g, h \in {\rm C}_{4}^{2}}$. 
From the $G = {\rm C}_{4}$ and $H = \{ \overline{0}, \overline{2} \}$ case of \cite[Theorem~1.1]{https://doi.org/10.48550/arxiv.2203.14420}, 
we have the following corollary. 

\begin{cor}\label{cor:2.1}
We have 
\begin{align*}
D_{4}(x_{0}, x_{1}, x_{2}, x_{3}) 
&= D_{2}(x_{0} + x_{2}, x_{1} + x_{3}) D_{2}\left( x_{0} - x_{2}, \sqrt{- 1} (x_{1} - x_{3}) \right) \\ 
&= \left\{ (x_{0} + x_{2})^{2}- (x_{1} + x_{3})^{2} \right\} \left\{ (x_{0} - x_{2})^{2} + (x_{1} - x_{3})^{2} \right\}. 
\end{align*}
\end{cor}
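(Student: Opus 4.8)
The plan is to prove the corollary directly from the classical diagonalization of the group matrix of a finite abelian group, rather than unpacking the general index-two factorization of \cite{https://doi.org/10.48550/arxiv.2203.14420}; the latter is the quickest route, but the direct computation is short enough to record in full. Recall that for a finite abelian group $G$ one has $\det{\left( x_{g h^{-1}} \right)}_{g, h \in G} = \prod_{\chi \in \widehat{G}} \sum_{g \in G} x_{g}\, \chi(g)$, where $\widehat{G}$ is the character group; this is just the statement that the regular representation of $G$ splits as the sum of all its one-dimensional representations, and for $G = C_{n}$ it recovers the familiar factorization of the circulant determinant. First I would fix the ordering of $C_{4}$ so that $\left( x_{g h^{-1}} \right)_{g,h}$ is the standard circulant matrix, so that this formula applies verbatim.

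Next, with $\sqrt{-1}$ a fixed primitive fourth root of unity, the four characters of $C_{4}$ send a generator to $1, \sqrt{-1}, -1, -\sqrt{-1}$, so evaluating the corresponding linear forms gives
\begin{align*}
D_{4}(x_{0}, x_{1}, x_{2}, x_{3}) &= (x_{0} + x_{1} + x_{2} + x_{3})(x_{0} - x_{1} + x_{2} - x_{3}) \\
&\quad \times \left( (x_{0} - x_{2}) + \sqrt{-1}(x_{1} - x_{3}) \right) \left( (x_{0} - x_{2}) - \sqrt{-1}(x_{1} - x_{3}) \right).
\end{align*}
Then I would pair the two factors coming from the real characters: their product is $(x_{0} + x_{2})^{2} - (x_{1} + x_{3})^{2}$, which is exactly $D_{2}(x_{0} + x_{2}, x_{1} + x_{3})$ since $D_{2}(a, b) = a^{2} - b^{2}$. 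Pairing the two factors from the complex-conjugate characters gives $(x_{0} - x_{2})^{2} + (x_{1} - x_{3})^{2} = (x_{0} - x_{2})^{2} - \left( \sqrt{-1}(x_{1} - x_{3}) \right)^{2}$, which is $D_{2}\left( x_{0} - x_{2}, \sqrt{-1}(x_{1} - x_{3}) \right)$. Multiplying the two identities yields the first displayed line of the corollary, and expanding both quadratic factors yields the second.

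There is essentially no obstacle here; the only points requiring care are the choice of ordering mentioned above and the sign bookkeeping when squaring $\sqrt{-1}(x_{1} - x_{3})$, so that the second quadratic factor really matches $D_{2}$ as defined in Section~\ref{thm:1.1}'s preliminaries. Alternatively, one can obtain the same factorization with no appeal to characters by writing the $4 \times 4$ group matrix of $C_{4}$ in $2 \times 2$ block form along the subgroup $H = \{ \overline{0}, \overline{2} \}$, each block being a $2 \times 2$ group matrix of $C_{2}$, and block-diagonalizing; this is precisely the $G = C_{4}$, $H = \{ \overline{0}, \overline{2} \}$ instance of \cite[Theorem~1.1]{https://doi.org/10.48550/arxiv.2203.14420}, so either derivation may be cited.
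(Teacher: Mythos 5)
Your proof is correct, and it takes a different route from the paper: the paper offers no computation at all, simply invoking the $G = C_{4}$, $H = \{ \overline{0}, \overline{2} \}$ case of the cited index-two factorization theorem, whereas you derive the identity directly from Dedekind's factorization $\det\left( x_{g h^{-1}} \right)_{g,h \in G} = \prod_{\chi \in \widehat{G}} \sum_{g} x_{g} \chi(g)$ for abelian $G$. Your pairing of the characters is right: the two real characters give $(x_{0}+x_{1}+x_{2}+x_{3})(x_{0}-x_{1}+x_{2}-x_{3}) = (x_{0}+x_{2})^{2}-(x_{1}+x_{3})^{2} = D_{2}(x_{0}+x_{2},\, x_{1}+x_{3})$, and the conjugate pair gives $(x_{0}-x_{2})^{2}+(x_{1}-x_{3})^{2} = D_{2}\left( x_{0}-x_{2},\, \sqrt{-1}(x_{1}-x_{3}) \right)$, matching both displayed lines. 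What your approach buys is self-containedness: it rests only on the classical circulant factorization and needs no external reference; what the paper's citation buys is brevity and uniformity, since the same family of factorization theorems is reused for Corollary~2.3, and the $D_{2}$-shaped grouping of factors is exactly what that general theorem outputs, rather than something one has to notice by hand. One small remark: your caution about fixing an ordering of $C_{4}$ is unnecessary, since the determinant of $\left( x_{gh^{-1}} \right)$ is invariant under simultaneous row and column permutations, so the character formula applies for any indexing; the only point that genuinely needs care is the sign when squaring $\sqrt{-1}(x_{1}-x_{3})$, which you handle correctly.
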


\begin{rem}\label{rem:2.2}
From Corollary~$\ref{cor:2.1}$, we have $D_{4}(x_{0}, x_{1}, x_{2}, x_{3}) = - D_{4}(x_{1}, x_{2}, x_{3}, x_{0})$. 
\end{rem}

From the $H = K = {\rm C}_{4}$ case of \cite[Theorem~1.1]{https://doi.org/10.48550/arxiv.2202.06952}, 
we have the following corollary. 

\begin{cor}\label{cor:2.3}
Let $D := D_{4 \times 4}(y_{0}, y_{1}, \ldots, y_{15})$. Then we have 
\begin{align*}
D = \prod_{k = 0}^{3} D_{4}\left( \sum_{s = 0}^{3} \sqrt{- 1}^{k s} y_{4 s}, \sum_{s = 0}^{3} \sqrt{- 1}^{k s} y_{1 + 4 s}, \sum_{s = 0}^{3} \sqrt{- 1}^{k s} y_{2 + 4 s}, \sum_{s = 0}^{3} \sqrt{- 1}^{k s} y_{3 + 4 s} \right). 
\end{align*}
\end{cor}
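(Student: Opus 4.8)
The plan is to prove the identity by realizing the group matrix of $C_{4}^{2}$ as a block-circulant matrix over the first cyclic factor and then diagonalizing the block structure; this is precisely the $H = K = C_{4}$ instance of the cited factorization, so the work is mostly bookkeeping. First I would order the sixteen elements of $C_{4}^{2}$ by their second coordinate, writing $g = (\overline{r}, \overline{s})$ and using the index $j = r + 4 s$ so that $y_{j} = y_{(\overline{r}, \overline{s})}$. Since $g h^{-1} = (\overline{r - r'}, \overline{s - s'})$, the matrix $M = \left( y_{g h^{-1}} \right)$ decomposes into a $4 \times 4$ array of $4 \times 4$ blocks indexed by $(s, s')$, where the $(s, s')$ block depends only on $s - s'$ and equals the $C_{4}$-circulant $A_{s - s'} := \left( y_{\overline{r - r'} + 4 \, \overline{(s - s')}} \right)_{r, r'}$. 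Thus $M$ is block-circulant with circulant blocks $A_{0}, A_{1}, A_{2}, A_{3}$.

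Next I would apply the block-circulant determinant formula: conjugating $M$ by $F_{4} \otimes I_{4}$, where $F_{4}$ is the $4$-point discrete Fourier transform associated with $\sqrt{-1} = e^{2 \pi \sqrt{-1}/4}$, block-diagonalizes the block-circulant matrix $M$ into the blocks $B_{k} = \sum_{s = 0}^{3} \sqrt{-1}^{k s} A_{s}$ for $k = 0, 1, 2, 3$, so that $D = \det M = \prod_{k = 0}^{3} \det B_{k}$. Because a linear combination of $C_{4}$-circulants is again a $C_{4}$-circulant, each $B_{k}$ is the $C_{4}$ group matrix whose defining coefficients are the twisted sums $z_{r}^{(k)} := \sum_{s = 0}^{3} \sqrt{-1}^{k s} y_{r + 4 s}$; hence $\det B_{k} = D_{4} \bigl( z_{0}^{(k)}, z_{1}^{(k)}, z_{2}^{(k)}, z_{3}^{(k)} \bigr)$ by the very definition of $D_{4}$, which matches the four arguments in the asserted product.

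Finally I would reconcile notation: the four coefficients $z_{0}^{(k)}, z_{1}^{(k)}, z_{2}^{(k)}, z_{3}^{(k)}$ are exactly $\sum_{s} \sqrt{-1}^{k s} y_{4 s}$, $\sum_{s} \sqrt{-1}^{k s} y_{1 + 4 s}$, $\sum_{s} \sqrt{-1}^{k s} y_{2 + 4 s}$, and $\sum_{s} \sqrt{-1}^{k s} y_{3 + 4 s}$, so that $\prod_{k} \det B_{k}$ is literally the right-hand side. Alternatively, one may simply invoke \cite[Theorem~1.1]{https://doi.org/10.48550/arxiv.2202.06952} with $H = K = C_{4}$, reading off the characters of the second factor as $\chi_{k}(\overline{s}) = \sqrt{-1}^{k s}$. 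I expect the only real obstacle to be this bookkeeping: fixing the block-circulant ordering so the blocks are genuine $C_{4}$-circulants, and confirming that the Fourier convention (the character $\sqrt{-1}^{k s}$ rather than its conjugate) places the twisted coefficients in the stated argument slots rather than a permuted or conjugated version. Corollary~\ref{cor:2.1} can serve as a consistency check, since expanding a single factor $D_{4}$ into its two quadratics should agree with the $k$-th block after substitution.
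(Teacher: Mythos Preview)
Your proposal is correct and aligns with the paper's approach: the paper states Corollary~\ref{cor:2.3} as an immediate consequence of \cite[Theorem~1.1]{https://doi.org/10.48550/arxiv.2202.06952} with $H = K = C_{4}$ and gives no further argument, which is exactly the alternative you mention. Your explicit block-circulant diagonalization via $F_{4} \otimes I_{4}$ is simply the content of that cited theorem specialized to this case, and your caveat about the Fourier convention is harmless here since the product over $k \in \{0,1,2,3\}$ is invariant under replacing $\sqrt{-1}^{ks}$ by $\sqrt{-1}^{-ks}$.
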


Throughout this paper, 
we assume that $a_{0}, a_{1}, \ldots, a_{15} \in \mathbb{Z}$, and let 
\begin{align*}
b_{i} &:= (a_{i} + a_{i + 8}) + (a_{i + 4} + a_{i + 12}), & 0 \leq i \leq 3, \\ 
c_{i} &:= (a_{i} + a_{i + 8}) - (a_{i + 4} + a_{i + 12}), & 0 \leq i \leq 3, \\ 
d_{i} &:= a_{i} - a_{i + 8}, & 0 \leq i \leq 7, \\ 
\alpha_{i} &:= d_{i} + \sqrt{- 1} d_{i + 4}, & 0 \leq i \leq 3. 
\end{align*}

\begin{rem}\label{rem:2.4}
For any $0 \leq i \leq 3$, the following hold: 
\begin{enumerate}
\item[$(1)$] $b_{i} \equiv c_{i} \equiv d_{i} + d_{i + 4} \pmod{2}$; 
\item[$(2)$] $b_{i} + c_{i} \equiv 2 d_{i} \pmod{4}$; 
\item[$(3)$] $b_{i} - c_{i} \equiv 2 d_{i + 4} \pmod{4}$. 
\end{enumerate}
\end{rem}

Let 
\begin{gather*}
\bm{a} := (a_{0}, a_{1}, \ldots, a_{15}), \qquad 
\bm{b} := (b_{0}, b_{1}, b_{2}, b_{3}), \qquad 
\bm{c} := (c_{0}, c_{1}, c_{2}, c_{3}), \\
\beta := (\alpha_{0} + \alpha_{2})^{2} - (\alpha_{1} + \alpha_{3})^{2}, \qquad 
\gamma := (\alpha_{0} - \alpha_{2})^{2} + (\alpha_{1} - \alpha_{3})^{2}. 
\end{gather*}
From Corollaries~$\ref{cor:2.1}$ and $\ref{cor:2.3}$, 
we have the following relation which will be frequently used in this paper: 
$$
D_{4 \times 4}(\bm{a}) = D_{4}(\bm{b}) D_{4}(\bm{c}) \beta \gamma \overline{\beta \gamma} = D_{4}(\bm{b}) D_{4}(\bm{c}) \beta \overline{\beta} \gamma \overline{\gamma}, 
$$
where $\overline{\alpha}$ denotes the complex conjugate of $\alpha \in \mathbb{C}$. 
From 
\begin{align*}
\beta 
&= (\alpha_{0} + \alpha_{2} + \alpha_{1} + \alpha_{3}) (\alpha_{0} + \alpha_{2} - \alpha_{1} - \alpha_{3}) \\ 
&= \left\{ (d_{0} + d_{2} + d_{1} + d_{3}) + \sqrt{- 1} (d_{4} + d_{6} + d_{5} + d_{7}) \right\} \\ 
&\qquad \times \left\{ (d_{0} + d_{2} - d_{1} - d_{3}) + \sqrt{- 1} (d_{4} + d_{6} - d_{5} - d_{7}) \right\}, \\ 
\gamma 
&= \left\{ (\alpha_{0} - \alpha_{2}) + \sqrt{- 1} (\alpha_{1} - \alpha_{3}) \right\} \left\{ (\alpha_{0} - \alpha_{2}) - \sqrt{- 1} (\alpha_{1} - \alpha_{3}) \right\} \\ 
&= \left\{ (d_{0} - d_{2} - d_{5} + d_{7}) + \sqrt{- 1} (d_{4} - d_{6} + d_{1} - d_{3}) \right\} \\ 
&\qquad \times \left\{ (d_{0} - d_{2} + d_{5} - d_{7}) + \sqrt{- 1} (d_{4} - d_{6} - d_{1} + d_{3}) \right\}, 
\end{align*}
we have the following lemma. 

\begin{lem}\label{lem:2.5}
The following hold: 
\begin{align*}
\beta \overline{\beta} 
&= \left\{ (d_{0} + d_{2} + d_{1} + d_{3})^{2} + (d_{4} + d_{6} + d_{5} + d_{7})^{2} \right\} \\ 
&\qquad \times \left\{ (d_{0} + d_{2} - d_{1} - d_{3})^{2} + (d_{4} + d_{6} - d_{5} - d_{7})^{2} \right\}, \\ 
\gamma \overline{\gamma} 
&= \left\{ (d_{0} - d_{2} - d_{5} + d_{7})^{2} + (d_{4} - d_{6} + d_{1} - d_{3})^{2} \right\} \\ 
&\qquad \times \left\{ (d_{0} - d_{2} + d_{5} - d_{7})^{2} + (d_{4} - d_{6} - d_{1} + d_{3})^{2} \right\}. 
\end{align*}
\end{lem}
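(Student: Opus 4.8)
The plan is to read the result off directly from the two factorizations of $\beta$ and $\gamma$ displayed immediately before the lemma, using only that complex conjugation is a ring homomorphism and that $z\overline{z} = (\Re z)^{2} + (\Im z)^{2}$ for every $z \in \mathbb{C}$.

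First I would record the factorization $\beta = z_{1} z_{2}$, where
\[
z_{1} = (d_{0} + d_{2} + d_{1} + d_{3}) + \sqrt{-1}\,(d_{4} + d_{6} + d_{5} + d_{7}), \qquad z_{2} = (d_{0} + d_{2} - d_{1} - d_{3}) + \sqrt{-1}\,(d_{4} + d_{6} - d_{5} - d_{7}).
\]
This is just the difference-of-squares identity $\beta = (\alpha_{0}+\alpha_{2})^{2} - (\alpha_{1}+\alpha_{3})^{2} = (\alpha_{0}+\alpha_{2}+\alpha_{1}+\alpha_{3})(\alpha_{0}+\alpha_{2}-\alpha_{1}-\alpha_{3})$ rewritten via $\alpha_{i} = d_{i} + \sqrt{-1}\,d_{i+4}$. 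Since $\overline{z_{1} z_{2}} = \overline{z_{1}}\,\overline{z_{2}}$, we get $\beta\overline{\beta} = (z_{1}\overline{z_{1}})(z_{2}\overline{z_{2}})$, and applying $z\overline{z} = (\Re z)^{2} + (\Im z)^{2}$ to each factor yields the claimed expression for $\beta\overline{\beta}$.

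The argument for $\gamma\overline{\gamma}$ is the same, starting instead from the sum-of-squares identity $\gamma = (\alpha_{0}-\alpha_{2})^{2} + (\alpha_{1}-\alpha_{3})^{2} = \{(\alpha_{0}-\alpha_{2}) + \sqrt{-1}\,(\alpha_{1}-\alpha_{3})\}\{(\alpha_{0}-\alpha_{2}) - \sqrt{-1}\,(\alpha_{1}-\alpha_{3})\}$, and then substituting $\alpha_{i} = d_{i} + \sqrt{-1}\,d_{i+4}$ to obtain the two Gaussian-integer factors
\[
w_{1} = (d_{0}-d_{2}-d_{5}+d_{7}) + \sqrt{-1}\,(d_{4}-d_{6}+d_{1}-d_{3}), \qquad w_{2} = (d_{0}-d_{2}+d_{5}-d_{7}) + \sqrt{-1}\,(d_{4}-d_{6}-d_{1}+d_{3}),
\]
whose squared moduli multiply to give $\gamma\overline{\gamma}$. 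There is no genuine obstacle here: the only thing requiring care is the bookkeeping of real and imaginary parts when expanding $\sqrt{-1}\,(\alpha_{1}-\alpha_{3}) = -(d_{5}-d_{7}) + \sqrt{-1}\,(d_{1}-d_{3})$, which is precisely what the displayed factorizations already carry out, so the lemma follows at once.
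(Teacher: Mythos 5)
Your proposal is correct and is essentially the paper's own argument: the paper states the lemma as an immediate consequence of the displayed factorizations of $\beta$ and $\gamma$ into the two Gaussian factors, and you simply make explicit the step $\beta\overline{\beta} = (z_{1}\overline{z_{1}})(z_{2}\overline{z_{2}})$ (and likewise for $\gamma$) via multiplicativity of conjugation and $z\overline{z} = (\Re z)^{2} + (\Im z)^{2}$. Your bookkeeping of real and imaginary parts, including $\sqrt{-1}\,(\alpha_{1}-\alpha_{3}) = -(d_{5}-d_{7}) + \sqrt{-1}\,(d_{1}-d_{3})$, matches the paper's displayed factors exactly.
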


\begin{rem}\label{rem:2.6}
From Lemma~$\ref{lem:2.5}$, 
each $\beta \overline{\beta}$ and $\gamma \overline{\gamma}$ is invariant under the replacing $(d_{0}, \ldots, d_{7})$ with $(d_{4}, d_{5}, d_{6}, d_{7}, d_{0}, d_{1}, d_{2}, d_{3})$. 
\end{rem}

\begin{lem}\label{lem:2.7}
We have $D_{4 \times 4}(\bm{a}) \equiv D_{4}(\bm{b}) \equiv D_{4}(\bm{c}) \equiv \beta \overline{\beta} \equiv \gamma \overline{\gamma} \pmod{2}$. 
\end{lem}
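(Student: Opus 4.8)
The plan is to work entirely modulo $2$ and reduce the four-way congruence to a single elementary identity. First I would reduce the various expressions mod $2$. By Remark 2.4(1), $b_i \equiv c_i \equiv d_i + d_{i+4} \pmod 2$ for each $0 \le i \le 3$, so $\bm b \equiv \bm c \pmod 2$ componentwise, and hence $D_4(\bm b) \equiv D_4(\bm c) \pmod 2$ is immediate from the polynomial expression in Corollary~\ref{cor:2.1} (a polynomial with integer coefficients evaluated at congruent integer tuples gives congruent values). So the real content is to show $D_4(\bm b) \equiv \beta\overline\beta \equiv \gamma\overline\gamma \equiv D_{4\times 4}(\bm a) \pmod 2$.

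Next I would handle $\beta\overline\beta \equiv \gamma\overline\gamma \pmod 2$. Modulo $2$, every square $z^2 \equiv z$, so from Lemma~\ref{lem:2.5},
\[
\beta\overline\beta \equiv (d_0+d_1+d_2+d_3 + d_4+d_5+d_6+d_7)(d_0+d_1+d_2+d_3+d_4+d_5+d_6+d_7) \equiv \sum_{i=0}^{7} d_i \pmod 2,
\]
since each of the two bracketed factors reduces mod $2$ to the full sum $d_0 + \cdots + d_7$ (the sign changes on $d_1,d_3,d_5,d_7$ disappear). The same computation applied to $\gamma\overline\gamma$ gives each factor $\equiv \sum_{i=0}^{7} d_i \pmod 2$ as well, so $\beta\overline\beta \equiv \gamma\overline\gamma \equiv \sum_{i=0}^{7} d_i \pmod 2$. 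Now using $d_i = a_i - a_{i+8} \equiv a_i + a_{i+8}\pmod 2$, this common value is $\equiv \sum_{i=0}^{15} a_i \pmod 2$.

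Then I would compute $D_4(\bm b) \pmod 2$. By Corollary~\ref{cor:2.1}, $D_4(\bm b) = \{(b_0+b_2)^2 - (b_1+b_3)^2\}\{(b_0-b_2)^2+(b_1-b_3)^2\}$, and mod $2$ each factor reduces to $b_0+b_1+b_2+b_3$, so $D_4(\bm b) \equiv (b_0+b_1+b_2+b_3)^2 \equiv b_0+b_1+b_2+b_3 \equiv \sum_{i=0}^{15} a_i \pmod 2$, where the last step uses $b_i = a_i + a_{i+4} + a_{i+8} + a_{i+12}$. Finally, for $D_{4\times 4}(\bm a)$ I would invoke the relation $D_{4\times 4}(\bm a) = D_4(\bm b)D_4(\bm c)\beta\overline\beta\,\gamma\overline\gamma$ displayed just before Lemma~\ref{lem:2.5}; since all four factors on the right are congruent to the same value $t := \sum_{i=0}^{15} a_i \bmod 2$, the product is $\equiv t^4 \equiv t \pmod 2$, completing the chain $D_{4\times 4}(\bm a) \equiv D_4(\bm b) \equiv D_4(\bm c) \equiv \beta\overline\beta \equiv \gamma\overline\gamma \pmod 2$.

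I do not expect any serious obstacle here: the only thing to be careful about is that the expressions $\beta\overline\beta$, $\gamma\overline\gamma$ and the $D_4$'s are genuine integers (guaranteed because they are norms / given by the real polynomial formula in Corollary~\ref{cor:2.1}), so that "mod $2$" makes sense and the reductions $z^2 \equiv z$ are legitimate. If one prefers to avoid even mentioning the factored relation for $D_{4\times 4}(\bm a)$, the same $z^2\equiv z$ trick can be applied directly to the product formula from Corollaries~\ref{cor:2.1} and~\ref{cor:2.3}, but routing through the already-displayed relation is cleanest.
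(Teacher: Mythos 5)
Your proposal is correct and follows essentially the same route as the paper: both reduce everything modulo $2$ via the explicit formulas of Corollary~\ref{cor:2.1} and Lemma~\ref{lem:2.5} together with Remark~\ref{rem:2.4}~(1), the only cosmetic difference being that you compute the common residue explicitly as $\sum_i a_i \bmod 2$ while the paper phrases it as matching parity criteria. No gaps.
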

\begin{proof}
From Corollary~$\ref{cor:2.1}$, it follows that for any $x_{0}, x_{1}, x_{2}, x_{3} \in \mathbb{Z}$, 
$$
D_{4}(x_{0}, x_{1}, x_{2}, x_{3}) \equiv x_{0} + x_{1} + x_{2} + x_{3} \pmod{2}. 
$$
Also, from Lemma~$\ref{lem:2.5}$, we have 
\begin{align*}
\beta \overline{\beta} \equiv d_{0} + d_{1} + \cdots + d_{7} \equiv \gamma \overline{\gamma} \pmod{2}. 
\end{align*}
Therefore, from Remark~$\ref{rem:2.4}$~(1), 
we have $D_{4}(\bm{b}) \equiv D_{4}(\bm{c}) \equiv \beta \overline{\beta} \equiv \gamma \overline{\gamma} \pmod{2}$. 
\end{proof}

The following lemma is immediately obtained from Lemma~$\ref{lem:2.5}$. 

\begin{lem}\label{lem:2.8}
We have 
\begin{align*}
\beta \overline{\beta} &= \left\{ (d_{0} + d_{2})^{2} + (d_{4} + d_{6})^{2} + (d_{1} + d_{3})^{2} + (d_{5} + d_{7})^{2} \right\}^{2} \\ 
&\qquad - 4 \left\{ (d_{0} + d_{2}) (d_{1} + d_{3}) + (d_{4} + d_{6}) (d_{5} + d_{7}) \right\}^{2}, \\ 
\gamma \overline{\gamma} &= \left\{ (d_{0} - d_{2})^{2} + (d_{4} - d_{6})^{2} + (d_{1} - d_{3})^{2} + (d_{5} - d_{7})^{2} \right\}^{2} \\ 
&\qquad - 4 \left\{ (d_{0} - d_{2}) (d_{5} - d_{7}) - (d_{4} - d_{6}) (d_{1} - d_{3}) \right\}^{2}. 
\end{align*}
\end{lem}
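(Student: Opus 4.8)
The plan is to deduce both identities from Lemma~\ref{lem:2.5} by a change of variables that displays each of $\beta \overline{\beta}$ and $\gamma \overline{\gamma}$ as a product of the form $|z + w|^{2} \, |z - w|^{2}$ for suitable Gaussian integers $z$ and $w$, and then to invoke the elementary identity
$$
|z + w|^{2} \, |z - w|^{2} = \bigl( |z|^{2} + |w|^{2} \bigr)^{2} - 4 \, \Re(z \overline{w})^{2},
$$
which follows immediately from $|z \pm w|^{2} = |z|^{2} + |w|^{2} \pm 2 \Re(z \overline{w})$.

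For $\beta \overline{\beta}$, I would set $p := d_{0} + d_{2}$, $q := d_{4} + d_{6}$, $r := d_{1} + d_{3}$, $s := d_{5} + d_{7}$, so that Lemma~\ref{lem:2.5} reads $\beta \overline{\beta} = \{ (p + r)^{2} + (q + s)^{2} \} \{ (p - r)^{2} + (q - s)^{2} \}$, and take $z := p + \sqrt{-1} \, q$ and $w := r + \sqrt{-1} \, s$. Then the two factors are exactly $|z + w|^{2}$ and $|z - w|^{2}$, while $|z|^{2} + |w|^{2} = p^{2} + q^{2} + r^{2} + s^{2}$ and $\Re(z \overline{w}) = p r + q s$; substituting into the displayed identity yields the first formula. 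For $\gamma \overline{\gamma}$, I would set $p' := d_{0} - d_{2}$, $q' := d_{4} - d_{6}$, $r' := d_{1} - d_{3}$, $s' := d_{5} - d_{7}$, note that Lemma~\ref{lem:2.5} becomes $\gamma \overline{\gamma} = \{ (p' - s')^{2} + (q' + r')^{2} \} \{ (p' + s')^{2} + (q' - r')^{2} \}$, and now choose the pairing $z' := p' + \sqrt{-1} \, q'$ and $w' := - s' + \sqrt{-1} \, r'$; here $|z'|^{2} + |w'|^{2} = {p'}^{2} + {q'}^{2} + {r'}^{2} + {s'}^{2}$ and $\Re(z' \overline{w'}) = q' r' - p' s'$, so the same identity gives the second formula.

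I do not expect a genuine obstacle here: once the two factors appearing in Lemma~\ref{lem:2.5} are recognized as $|z \pm w|^{2}$, everything is forced. The only point that needs care is the bookkeeping of signs in the $\gamma \overline{\gamma}$ case — one must pair $+ \sqrt{-1} \, r'$ with $- s'$ rather than with $+ s'$, so that the cross term comes out as $(d_{0} - d_{2})(d_{5} - d_{7}) - (d_{4} - d_{6})(d_{1} - d_{3})$ and not with the opposite sign. As a sanity check (or as an alternative to the whole argument) one can simply expand both sides of each claimed equality and compare, which is a routine computation.
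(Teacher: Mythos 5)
Your proof is correct: the identity $|z+w|^{2}|z-w|^{2} = (|z|^{2}+|w|^{2})^{2} - 4\Re(z\overline{w})^{2}$ applied to the factors from Lemma~\ref{lem:2.5} gives exactly the stated formulas, and your sign bookkeeping in the $\gamma\overline{\gamma}$ case checks out. This is essentially the paper's argument, which simply derives the lemma directly from Lemma~\ref{lem:2.5} by the same elementary algebra (the paper calls it immediate); your complex-number packaging is just a tidy way of organizing that expansion.
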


By direct calculation, we have the following lemma. 

\begin{lem}\label{lem:2.9}
We have 
\begin{align*}
&\left\{ (d_{0} + d_{2})^{2} + (d_{4} + d_{6})^{2} + (d_{1} + d_{3})^{2} + (d_{5} + d_{7})^{2} \right\}^{2} \\ 
&\qquad - \left\{ (d_{0} - d_{2})^{2} + (d_{4} - d_{6})^{2} + (d_{1} - d_{3})^{2} + (d_{5} - d_{7})^{2} \right\}^{2} \\ 
&\quad= 8 \left( d_{0}^{2} + d_{2}^{2} + d_{4}^{2} + d_{6}^{2} + d_{1}^{2} + d_{3}^{2} + d_{5}^{2} + d_{7}^{2} \right) (d_{0} d_{2} + d_{4} d_{6} + d_{1} d_{3} + d_{5} d_{7}). 
\end{align*}
\end{lem}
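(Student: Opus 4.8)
The plan is to recognize the left-hand side as a difference of two squares and factor it accordingly. Set
$$
S_{+} := (d_{0} + d_{2})^{2} + (d_{4} + d_{6})^{2} + (d_{1} + d_{3})^{2} + (d_{5} + d_{7})^{2}, \qquad
S_{-} := (d_{0} - d_{2})^{2} + (d_{4} - d_{6})^{2} + (d_{1} - d_{3})^{2} + (d_{5} - d_{7})^{2},
$$
so that the quantity in Lemma~$\ref{lem:2.9}$ is $S_{+}^{2} - S_{-}^{2} = (S_{+} + S_{-})(S_{+} - S_{-})$. The strategy is then to evaluate each of the two factors separately using the elementary identities $(x + y)^{2} + (x - y)^{2} = 2(x^{2} + y^{2})$ and $(x + y)^{2} - (x - y)^{2} = 4xy$, applied to each of the four pairs $\{d_{0}, d_{2}\}$, $\{d_{4}, d_{6}\}$, $\{d_{1}, d_{3}\}$, $\{d_{5}, d_{7}\}$.

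Carrying this out, the first identity gives $S_{+} + S_{-} = 2\left( d_{0}^{2} + d_{2}^{2} + d_{4}^{2} + d_{6}^{2} + d_{1}^{2} + d_{3}^{2} + d_{5}^{2} + d_{7}^{2} \right)$, and the second gives $S_{+} - S_{-} = 4 \left( d_{0} d_{2} + d_{4} d_{6} + d_{1} d_{3} + d_{5} d_{7} \right)$. Multiplying these two expressions produces exactly the right-hand side of the asserted identity, the factor $8$ arising as $2 \cdot 4$.

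There is no real obstacle here: once the difference-of-squares factorization is in place, the remaining work is the routine termwise application of the two standard quadratic identities and collecting terms, which is why the statement can be recorded as obtained "by direct calculation." I would present it in essentially the compressed form above, displaying the two factorizations of $S_{+} \pm S_{-}$ and then the product, rather than expanding all sixteen squares by brute force.
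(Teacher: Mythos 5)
Your proof is correct and is essentially the paper's argument: the paper simply records this identity as "by direct calculation," and your difference-of-squares factorization with the identities $(x+y)^2+(x-y)^2=2(x^2+y^2)$ and $(x+y)^2-(x-y)^2=4xy$ is just a clean way of organizing exactly that calculation.
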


\begin{lem}\label{lem:2.10}
The following hold: 
\begin{enumerate}
\item[$(1)$] $2 (d_{0} d_{2} + d_{4} d_{6} + d_{1} d_{3} + d_{5} d_{7}) \equiv b_{0} b_{2} + b_{1} b_{3} + c_{0} c_{2} + c_{1} c_{3} \pmod{4}$; 
\item[$(2)$] $2 (d_{0} d_{7} + d_{2} d_{5} + d_{4} d_{3} + d_{6} d_{1}) \equiv b_{0} b_{3} + b_{2} b_{1} - c_{0} c_{3} - c_{2} c_{1} \pmod{4}$; 
\item[$(3)$] $2 (d_{0} d_{3} + d_{2} d_{1} + d_{4} d_{7} + d_{6} d_{5}) \equiv b_{0} b_{3} + b_{2} b_{1} + c_{0} c_{3} + c_{2} c_{1} \pmod{4}$; 
\item[$(4)$] $2 (d_{0} d_{5} + d_{2} d_{7} + d_{4} d_{1} + d_{6} d_{3}) \equiv b_{0} b_{1} + b_{2} b_{3} - c_{0} c_{1} - c_{2} c_{3} \pmod{4}$; 
\item[$(5)$] $2 (d_{0} d_{1} + d_{2} d_{3} + d_{4} d_{5} + d_{6} d_{7}) \equiv b_{0} b_{1} + b_{2} b_{3} + c_{0} c_{1} + c_{2} c_{3} \pmod{4}$. 
\end{enumerate}
\end{lem}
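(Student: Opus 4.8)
The plan is to prove all five congruences simultaneously by expressing the $d_i$ in terms of the $a_i$ and reducing everything modulo~$4$, using the defining relations $b_i = (a_i + a_{i+8}) + (a_{i+4} + a_{i+12})$, $c_i = (a_i + a_{i+8}) - (a_{i+4} + a_{i+12})$, and $d_i = a_i - a_{i+8}$, $d_{i+4} = a_{i+4} - a_{i+12}$ for $0 \le i \le 3$. First I would introduce the shorthand $u_i := a_i + a_{i+8}$ and $v_i := a_{i+4} + a_{i+12}$ for $0 \le i \le 3$, so that $b_i = u_i + v_i$, $c_i = u_i - v_i$, $d_i = a_i - a_{i+8}$, and $d_{i+4} = a_{i+4} - a_{i+12}$. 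The key elementary observation is that $d_i \equiv u_i \pmod 2$ and $d_{i+4} \equiv v_i \pmod 2$ (since $a_i - a_{i+8}$ and $a_i + a_{i+8}$ have the same parity), hence for any indices $2 d_i d_j \equiv 2 u_i u_j \pmod 4$ whenever $\{i,j\}$ lie in the ``first half'' $\{0,1,2,3\}$, with the analogous statement $2 d_{i+4} d_{j+4} \equiv 2 v_i v_j \pmod 4$ for the ``second half,'' and mixed products $2 d_i d_{j+4} \equiv 2 u_i v_j \pmod 4$.

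Granting that, each left-hand side becomes, modulo~$4$, a sum of two pieces: one built purely from the $u$'s (coming from the index pairs both in $\{0,1,2,3\}$) and one built purely from the $v$'s (from the pairs both in the shifted set), since in every one of the five expressions the four product terms split as two ``$u$-type'' and two ``$v$-type''. For instance in~(1) the terms $d_0 d_2$ and $d_1 d_3$ give $2(u_0 u_2 + u_1 u_3)$ and $d_4 d_6 + d_5 d_7$ gives $2(v_0 v_2 + v_1 v_3)$, so the left side is congruent to $2(u_0 u_2 + u_1 u_3) + 2(v_0 v_2 + v_1 v_3) \pmod 4$; the other four cases are handled by the same bookkeeping, being careful which products are mixed (those contribute $u_i v_j$ terms, which is where the sign patterns in~(2) and~(4) originate). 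Then on the right-hand side one substitutes $b_i = u_i + v_i$, $c_i = u_i - v_i$ and expands: $b_i b_j + c_i c_j = 2 u_i u_j + 2 v_i v_j$ and $b_i b_j - c_i c_j = 2 u_i v_j + 2 u_j v_i$. Matching these expansions against the left-hand sides term by term, modulo~$4$, yields each identity; the $2$ in front means we only ever need the $u$'s and $v$'s modulo~$2$, which is exactly what the parity observation supplies, so no cross terms of the form $u_i v_j$ survive unexpectedly.

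The only real subtlety — and the step I expect to take the most care — is the bookkeeping of which of the four product terms on each left-hand side is ``mixed'' (one index in the first half, one in the shifted half) versus ``pure,'' together with tracking the signs; an error there would break the matching with the $\pm c_i c_j$ patterns in statements~(2) and~(4). A clean way to organize this is to write each $d$-product as $2 w_? w_?$ with $w$ standing for $u$ or $v$ as dictated by the index, collect the first-half contribution and the shifted-half contribution separately, and then verify the two halves separately against $\tfrac12(b_i b_j \pm c_i c_j)$-type expressions; since Remark~\ref{rem:2.4}(2),(3) already records $b_i + c_i \equiv 2 d_i$ and $b_i - c_i \equiv 2 d_{i+4} \pmod 4$, one can alternatively feed those directly into products and reduce, which may be the most economical route and avoids reintroducing $u_i, v_i$ at all. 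Either way the proof is a finite, purely mechanical mod-$4$ computation, and I would present it by doing~(1) in full detail and indicating that~(2)--(5) follow by the identical method with the stated sign conventions.
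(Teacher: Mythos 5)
Your proposal is correct, and it is essentially the paper's own argument: the paper likewise expands $b_ib_j\pm c_ic_j$ into twice the products of the half-sums $a_i+a_{i+8}$ and $a_{i+4}+a_{i+12}$ (your $u_i,v_i$) and then reduces modulo $4$ using $a_i+a_{i+8}\equiv a_i-a_{i+8}=d_i\pmod 2$, doing case (1) explicitly and noting (2)--(5) follow in the same way. Your explicit $u_i,v_i$ notation and the sign bookkeeping you describe are just a tidier packaging of that same computation, and I checked that the matching works in all five cases.
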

\begin{proof}
We obtain (1) from 
\begin{align*}
&b_{0} b_{2} + b_{1} b_{3} + c_{0} c_{2} + c_{1} c_{3} \\ 
&\quad= (a_{0} + a_{8} + a_{4} + a_{12}) (a_{2} + a_{10} + a_{6} + a_{14}) + (a_{1} + a_{9} + a_{5} + a_{13}) (a_{3} + a_{11} + a_{7} + a_{15}) \\ 
&\qquad \: \: + (a_{0} + a_{8} - a_{4} - a_{12}) (a_{2} + a_{10} - a_{6} - a_{14}) + (a_{1} + a_{9} - a_{5} - a_{13}) (a_{3} + a_{11} - a_{7} - a_{15}) \\ 
&\quad= 2 (a_{0} + a_{8}) (a_{2} + a_{10}) + 2 (a_{4} + a_{12}) (a_{6} + a_{14}) \\ 
&\qquad \: \: + 2 (a_{1} + a_{9}) (a_{3} + a_{11}) + 2 (a_{5} + a_{13}) (a_{7} + a_{15}) \\ 
&\quad\equiv 2 (d_{0} d_{2} + d_{4} d_{6} + d_{1} d_{3} + d_{5} d_{7}) \pmod{4}. 
\end{align*}
In the same way, we can prove (2)--(5). 
\end{proof}
Let $P := \left\{ p \mid p \equiv - 3  \: \: ({\rm mod} \: 8) \: \: \text{is a prime number} \right\}$. 
It is well known that a positive integer $n$ is expressible as a sum of two squares if and only if in the prime factorization of $n$, 
every prime of the form $4 k + 3$ occurs an even number of times. 
From this, we have the following corollary. 

\begin{cor}\label{cor:2.11}
If $a^{2} + b^{2} \equiv - 3 \pmod{8}$, 
then there exist $k \in \mathbb{Z}$ and $8 l - 3 \in P$ satisfying $a^{2} + b^{2} = (8 k + 1) (8 l - 3)$. 
\end{cor}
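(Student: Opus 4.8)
The plan is to prove Corollary~\ref{cor:2.11} by combining the classical two-squares criterion with a parity bookkeeping argument modulo~$8$. First I would observe that if $a^{2} + b^{2} \equiv -3 \equiv 5 \pmod 8$, then $a^{2} + b^{2}$ is odd, hence exactly one of $a, b$ is even; after relabeling, say $b$ is even and $a$ is odd, so $a^{2} \equiv 1 \pmod 8$ and consequently $b^{2} \equiv 4 \pmod 8$, i.e.\ $b \equiv 2 \pmod 4$. The point of recording this is that $n := a^{2} + b^{2}$ is a positive integer which is a sum of two squares, so by the well-known criterion quoted just before the corollary, every prime $q \equiv 3 \pmod 4$ divides $n$ to an even power. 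In particular every prime in $P$ (all of which are $\equiv 3 \pmod 4$) appears in $n$ to an even power, and more importantly every prime $\equiv 3 \pmod 4$ at all appears to an even power.

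Next I would analyze the prime factorization of $n$ modulo $8$. Write $n = 2^{e} \prod q_{i}^{f_{i}}$ with the $q_{i}$ odd primes. Since $n$ is odd, $e = 0$. Split the odd primes into those $\equiv 1 \pmod 4$ and those $\equiv 3 \pmod 4$; the latter occur with even exponents, so their product is a perfect square and hence $\equiv 1 \pmod 8$. Thus $n \equiv \big(\prod_{q_i \equiv 1 (4)} q_i^{f_i}\big) \cdot (\text{square}) \pmod 8$, and modulo $8$ every $q_i \equiv 1 \pmod 4$ is $\equiv 1$ or $\equiv 5 \pmod 8$. So $n \equiv 5^{t} \pmod 8$ where $t$ is the number of prime factors of $n$ (counted with multiplicity) that are $\equiv 5 \pmod 8$. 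Since $5^{2} \equiv 1 \pmod 8$ and $n \equiv 5 \pmod 8$, we get that $t$ is odd; in particular $t \geq 1$, so there exists at least one prime factor $p_{0}$ of $n$ with $p_{0} \equiv 5 \equiv -3 \pmod 8$, i.e.\ $p_{0} = 8l - 3 \in P$ for some $l \in \mathbb{Z}$. Set $m := n / p_{0}$, an integer. Then $n = m \cdot (8l - 3)$, and reducing modulo $8$ gives $5 \equiv m \cdot 5 \pmod 8$, hence $m \equiv 1 \pmod 8$, i.e.\ $m = 8k + 1$ for some $k \in \mathbb{Z}$. Therefore $a^{2} + b^{2} = n = (8k+1)(8l-3)$, which is exactly the claimed conclusion.

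I do not expect a serious obstacle here: the only mild care needed is the modular arithmetic step showing that the ``$3 \bmod 4$'' part of $n$ is a square and hence contributes $1 \pmod 8$, and the counting argument on the number of $5 \pmod 8$ prime factors to force it odd and hence nonzero. One small subtlety worth spelling out is that the statement allows $8l-3$ and $8k+1$ to be \emph{any} integers of those residues, not necessarily positive or prime apart from the membership $8l-3 \in P$; so once we have extracted a genuine prime $p_{0} \in P$ dividing $n$ and defined $m = n/p_{0} > 0$, the factorization is literal and no sign issues arise. Thus the whole argument reduces to: (i) parity of $a, b$; (ii) the two-squares theorem to control the $3 \bmod 4$ primes; (iii) a short $\pmod 8$ computation to locate a prime $\equiv -3 \pmod 8$ and to identify the cofactor as $\equiv 1 \pmod 8$.
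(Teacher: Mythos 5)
Your proposal is correct and fills in exactly the deduction the paper leaves implicit: it invokes the two-squares criterion quoted just before the corollary to make the $3 \bmod 4$ part of $a^{2}+b^{2}$ a perfect square (hence $\equiv 1 \pmod 8$), then counts prime factors $\equiv 5 \pmod 8$ to extract a prime $p_{0} = 8l-3 \in P$ and checks the cofactor is $\equiv 1 \pmod 8$. This is the same approach the paper intends, just written out in full detail.
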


\section{Integer group determinant of ${\rm C}_{4}$}

\begin{lem}[{\cite[Lemmas~$4.6$ and $4.7$]{https://doi.org/10.48550/arxiv.2203.14420}}]\label{lem:3.1}
For any $k, l, m, n \in \mathbb{Z}$, the following hold: 
\begin{enumerate}
\item[$(1)$] $D_{4}(2 k + 1, 2 l, 2 m, 2 n) \equiv 8 m + 1 \pmod{16}$; 
\item[$(2)$] $D_{4}(2 k, 2 l + 1, 2 m + 1, 2 n + 1) \equiv 8 (k + l + n) - 3 \pmod{16}$. 
\end{enumerate}
\end{lem}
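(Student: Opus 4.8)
The statement is attributed to \cite{https://doi.org/10.48550/arxiv.2203.14420}, so strictly it suffices to cite it; but to give a self-contained proof the plan is to expand the factorization from Corollary~\ref{cor:2.1},
$$
D_{4}(x_{0}, x_{1}, x_{2}, x_{3}) = \left\{ (x_{0} + x_{2})^{2} - (x_{1} + x_{3})^{2} \right\} \left\{ (x_{0} - x_{2})^{2} + (x_{1} - x_{3})^{2} \right\},
$$
and simply track everything modulo $16$ under the two parity patterns. The key arithmetic input is the behavior of squares mod $16$: an even square $(2t)^2 = 4t^2$ is $\equiv 0$ or $4 \pmod{16}$ according as $t$ is even or odd, i.e. $4t^2 \equiv 4t \pmod 8$ after dividing, and an odd square is $\equiv 1 \pmod 8$. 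So the plan has three short steps.

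\textbf{Step 1 (case (1): $x_0$ odd, $x_1,x_2,x_3$ even).} Here $x_0+x_2$ and $x_0-x_2$ are odd, while $x_1 \pm x_3$ are even. Write $x_1+x_3 = 2u$, $x_1 - x_3 = 2v$ with $u \equiv v \pmod 2$ (both are $\equiv x_1+x_3 \pmod 2$, hence equal parity), and note $u+v = x_1 + \text{(stuff)}$... more precisely $2u = 2l+2n$ so $u = l+n$ and $2v = 2l - 2n$ so $v = l - n$; thus $u \equiv v \pmod 2$. Then the first factor is $(x_0+x_2)^2 - 4u^2$ and the second is $(x_0-x_2)^2 + 4v^2$. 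Since $x_0 \pm x_2$ are odd, their squares are $\equiv 1 \pmod 8$; writing $x_0 = 2k+1$, $x_2 = 2m$, one computes $(x_0+x_2)^2 = (2k+2m+1)^2 \equiv 8m+1 \pmod{16}$ up to the relevant reduction, and $(x_0 - x_2)^2 \equiv -8m+1 \pmod{16}$, while $4u^2 \equiv 4v^2 \pmod{16}$ since $u \equiv v$. Multiplying the two factors mod $16$ and using that $4u^2 \cdot (\text{odd}) \equiv 4u^2 \pmod{16}$-type simplifications, the $u,v$ contributions cancel and one is left with $(8m+1)(-8m+1) \equiv 1 - 64m^2 \equiv 8m+1 \pmod{16}$ — wait, that needs care; the honest computation is to expand $(1 + 8m - 4u^2)(1 - 8m + 4v^2) \bmod 16$ with $u^2 \equiv v^2 \pmod 2$ yielding $8m+1$.

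\textbf{Step 2 (case (2): $x_0$ even, $x_1,x_2,x_3$ odd).} Now $x_0 \pm x_2$ are odd, and $x_1 \pm x_3$ are even; write $x_1+x_3 = 2u$, $x_1 - x_3 = 2v$ with $u \equiv v \pmod 2$. With $x_0 = 2k$, $x_1 = 2l+1$, $x_2 = 2m+1$, $x_3 = 2n+1$ one has $x_0 + x_2 = 2(k+m)+1$, so $(x_0+x_2)^2 \equiv 8(k+m)+1 \pmod{16}$, and $(x_0 - x_2)^2 \equiv 8(m-k) + ... $; meanwhile $u = l+n+1$, so $4u^2 \equiv 4(l+n+1) \equiv 4(l+n)+4 \pmod 8$, and similarly for $v = l-n$. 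The first factor becomes $\equiv 8(k+m) + 1 - 4u^2 \pmod{16}$ and the second $\equiv 8(m-k)+1 + 4v^2 \pmod{16}$ (using oddness of $x_0 \pm x_2$); one then multiplies mod $16$, and the claim is that the product collapses to $8(k+l+n) - 3$.

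\textbf{Main obstacle.} There is no conceptual obstacle — the whole lemma is a bookkeeping exercise in $\mathbb{Z}/16\mathbb{Z}$. The one place requiring genuine care is keeping track of the cross terms: a factor of the form $\text{odd} \cdot 4t$ reduces mod $16$ only to $4t \bmod 16$ times the residue of the odd part mod $4$, so one must not prematurely replace the odd factors by $1$. The cleanest way to avoid errors is to reduce each of the two bracketed factors to a canonical form $a + 4b \pmod{16}$ with $a \in \{1, 9\}$ (coming from the odd square) and then use $(a_1 + 4b_1)(a_2 + 4b_2) \equiv a_1 a_2 + 4(a_1 b_2 + a_2 b_1) \pmod{16}$, noting $a_i \equiv 1 \pmod 4$. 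Given the length, I would in fact just invoke the cited result and move on.
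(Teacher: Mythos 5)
The paper gives no proof of this lemma at all --- it is imported verbatim from the cited reference --- so your fallback of simply invoking that citation coincides with what the paper does. The problem is your self-contained sketch, which as written contains a genuine error, not mere imprecision. The congruence $(2k+2m+1)^2 \equiv 8m+1 \pmod{16}$ is false (take $k=1$, $m=0$: the left side is $9$, the right side is $1$), and likewise $(2k-2m+1)^2 \equiv -8m+1 \pmod{16}$ fails; an odd square $(2s+1)^2 = 1+4s(s+1)$ is $1$ or $9$ modulo $16$ according to $s \bmod 4$, and the residue of neither bracketed factor is determined by $m$ alone. Worse, the product you then propose as the ``honest computation,'' $(1+8m-4u^2)(1-8m+4v^2)$ with $u \equiv v \pmod 2$, actually reduces to $1 \pmod{16}$ rather than $8m+1$: all cross terms vanish modulo $16$ and $4(v^2-u^2) \equiv 0 \pmod{16}$ since $v \pm u$ are both even. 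That contradicts the lemma itself (e.g.\ $D_4(1,0,2,0)=9 \not\equiv 1 \pmod{16}$), so the ``canonical form'' step, which you yourself identify as the crux, is exactly where the sketch breaks.

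The repair is to avoid reducing the two odd squares separately and instead combine the factors first: writing $F_1=(2(k+m)+1)^2-(2l+2n)^2$ and $F_2=(2(k-m)+1)^2+(2l-2n)^2$, both are $\equiv 1 \pmod 4$, hence $F_1F_2 \equiv F_1+F_2-1 \pmod{16}$, and
\[
F_1+F_2-1 = 1+4\left\{(k+m)^2+(k+m)+(k-m)^2+(k-m)\right\}-16ln = 1+8(k^2+k)+8m^2-16ln \equiv 8m+1 \pmod{16},
\]
so the $8m$ emerges only from the sum of the two factors, never from either square alone. The same device settles (2): there $F_1+F_2-1 = 1+8(k^2+m^2+m)-4(2l+1)(2n+1) \equiv 8(k+l+n)-3 \pmod{16}$. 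With this correction your plan goes through; alternatively, citing the reference, as the paper does, is enough.
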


Let $\mathbb{Z}_{\rm odd}$ be the set of all odd numbers. 

\begin{lem}\label{lem:3.2}
For any $k, l, m, n \in \mathbb{Z}$, the following hold: 
\begin{enumerate}
\item[$(1)$] $D_{4}(2 k, 2 l, 2 m, 2 n) \in \begin{cases} 2^{4} \mathbb{Z}_{\rm odd}, & k + m \not\equiv l + n \pmod{2}, \\ 2^{8} \mathbb{Z}, & k + m \equiv l + n \pmod{2}; \end{cases}$
\item[$(2)$] $D_{4}(2 k + 1, 2 l + 1, 2 m + 1, 2 n + 1) \in \begin{cases} 2^{4} \mathbb{Z}_{\rm odd}, & k + m \not\equiv l + n \pmod{2}, \\ 2^{7} \mathbb{Z}_{\rm odd}, & (k + m) (l + n) \equiv - 1 \pmod{4}, \\ 2^{9} \mathbb{Z}, & \text{otherwise}; \end{cases}$
\item[$(3)$] $D_{4}(2 k, 2 l + 1, 2 m, 2 n + 1)$ \\ 
\quad $\in \begin{cases} 2^{5} \mathbb{Z}_{\rm odd}, & k - m \equiv l - n \equiv 1 \pmod{2}, \\ 2^{6} \mathbb{Z}_{\rm odd}, & k \equiv m \pmod{2} \: \: \text{and} \: \: (2 k + 2 l + 1) (2 m + 2 n + 1) \equiv \pm 3 \pmod{8}, \\ 2^{7} \mathbb{Z}, & \text{otherwise}; \end{cases}$
\item[$(4)$] $D_{4}(2 k, 2 l, 2 m + 1, 2 n + 1) \in \begin{cases} 2^{4} \mathbb{Z}_{\rm odd}, & (2 k + 2 m + 1) (2 l + 2 n + 1) \equiv \pm 3 \pmod{8}, \\ 2^{5} \mathbb{Z}, & (2 k + 2 m + 1) (2 l + 2 n + 1) \equiv \pm 1 \pmod{8}.  \end{cases}$
\end{enumerate}
\end{lem}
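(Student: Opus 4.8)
All four parts concern the $2$-adic valuation of $D_4$ evaluated at integer vectors with prescribed parities, so the natural tool is Corollary~\ref{cor:2.1}, which factors $D_4(x_0,x_1,x_2,x_3) = \{(x_0+x_2)^2-(x_1+x_3)^2\}\{(x_0-x_2)^2+(x_1-x_3)^2\}$. I would treat each part by substituting the prescribed parities, pulling out the obvious powers of $2$ from each factor, and then analyzing the odd/even cofactor via congruences modulo $4$, $8$, or $16$ as needed. The recurring elementary fact is that a sum of two squares $u^2+v^2$ with $u\equiv v\pmod 2$ is either $\equiv 0\pmod 8$ (when both even) or $\equiv 2\pmod 4$ (when both odd is impossible here since $u\equiv v$), and more usefully that $u^2+v^2\equiv 1,2\pmod 4$ and $\equiv 1,4,0 \text{ or } 2,5\pmod 8$ according to parities; similarly a difference of two squares $u^2-v^2$ with $u\equiv v\pmod 2$ is divisible by $8$ when both are even and by $8$ or is $\equiv 0\pmod 8$ otherwise. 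Part~(1) is cleanest: writing $x_i=2z_i$, $D_4(2z_0,\dots)=2^4\{(z_0+z_2)^2-(z_1+z_3)^2\}\{(z_0-z_2)^2+(z_1-z_3)^2\}$, and then the parity of $z_0+z_1+z_2+z_3$ controls whether each factor is odd; when $z_0+z_2\equiv z_1+z_3$, both factors are even and one gains at least $2^2\cdot 2^2$ more, giving $2^8$; when they differ, $(z_0-z_2)^2+(z_1-z_3)^2$ is odd and $(z_0+z_2)^2-(z_1+z_3)^2$ is odd, so the product lies in $2^4\mathbb{Z}_{\mathrm{odd}}$.

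**Key steps in order.** First I would dispatch (1) as above. For (2), substitute $x_i = 2z_i+1$, so $x_0+x_2 = 2(z_0+z_2+1)$, $x_1+x_3=2(z_1+z_3+1)$, $x_0-x_2=2(z_0-z_2)$, $x_1-x_3=2(z_1-z_3)$; this again extracts $2^4$, leaving $\{(z_0+z_2+1)^2-(z_1+z_3+1)^2\}\{(z_0-z_2)^2+(z_1-z_3)^2\}$, and I would split on the parities of $z_0+z_2$ versus $z_1+z_3$, using that $(z_0-z_2)^2+(z_1-z_3)^2$ is odd iff $z_0+z_2\not\equiv z_1+z_3$, and in the complementary case factor the difference of squares $(z_0+z_2+1+z_1+z_3+1)(z_0+z_2-z_1-z_3)$ to read off the extra powers of $2$, the finer split into $2^7\mathbb{Z}_{\mathrm{odd}}$ vs $2^9\mathbb{Z}$ coming from the value of $(z_0+z_2)(z_1+z_3)\bmod 4$ — I would match this against the stated condition $(k+m)(l+n)\equiv -1\pmod 4$. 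Parts (3) and (4) are the mixed-parity cases: for (3) with $x_0=2k,x_1=2l+1,x_2=2m,x_3=2n+1$ we get $x_0+x_2=2(k+m)$, $x_1+x_3=2(l+n+1)$, $x_0-x_2=2(k-m)$, $x_1-x_3=2(l-n)$, all even, so $2^4$ factors out and the cofactor is $\{(k+m)^2-(l+n+1)^2\}\{(k-m)^2+(l-n)^2\}$; the case split is governed by parities of $k-m,l-n$ and the residue of $(2k+2l+1)(2m+2n+1)\bmod 8$, which I would relate to the difference-of-squares factor by noting $(k+m)-(l+n+1)$ and $(k+m)+(l+n+1)$ encode that product. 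For (4), with $x_0=2k,x_1=2l,x_2=2m+1,x_3=2n+1$, one factor $x_0-x_2 = -(2(m-k)+1)$ is odd while $x_0+x_2=2(k+m)+1$ is odd too, so $(x_0+x_2)^2-(x_1+x_3)^2 = (2k+2m+1)^2-(2l+2n+1)^2$ which is a difference of two odd squares hence divisible by $8$; meanwhile $(x_0-x_2)^2+(x_1-x_3)^2$ is a sum of two odd squares hence $\equiv 2\pmod 4$; so $2^4$ comes out for free, and the dichotomy $2^4\mathbb{Z}_{\mathrm{odd}}$ vs $2^5\mathbb{Z}$ hinges on whether the difference-of-odd-squares is exactly $2^3\mathbb{Z}_{\mathrm{odd}}$ or divisible by $2^4$, which by the identity $(A^2-B^2) = (A-B)(A+B)$ for odd $A,B$ is controlled precisely by $AB\bmod 8$ (equivalently $A^2-B^2\equiv 0$ or $8\pmod{16}$), matching $\pm 3$ vs $\pm 1 \pmod 8$.

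**Main obstacle.** The routine part is extracting the guaranteed $2^4$ (or $2^5$) from the two factors; the delicate part is the \emph{sharpness} of the higher thresholds in (2) and (3) — showing that the cofactor is genuinely odd (not merely even) in the first subcase of each, and that in the borderline middle subcases the valuation is exactly $7$ and no more. This requires carefully tracking the odd parts through the difference-of-squares factorization modulo $16$ (and in (3) modulo $8$ for the sum-of-squares factor), and verifying that the stated arithmetic conditions on $(k+m)(l+n)$ or on $(2k+2l+1)(2m+2n+1)$ are exactly equivalent to the corresponding $2$-adic statements. I would organize this by reducing everything to two lemmas on odd integers: (i) for odd $A,B$, $v_2(A^2-B^2)=3$ iff $AB\equiv 3\pmod 4$ (i.e. $A\equiv -B\pmod 4$) and $v_2(A^2-B^2)\ge 4$ otherwise; and (ii) for integers $u,v$, $u^2+v^2$ modulo $8$ takes value in $\{0,1,2,4,5\}$ with the value determined by $(u\bmod 2, v\bmod 2, \text{and whether }u\text{ or }v\equiv 0\pmod 4)$. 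With these two lemmas in hand, each of (1)--(4) becomes a finite check over parity patterns, and I expect no single step to be conceptually hard — only bookkeeping-intensive.
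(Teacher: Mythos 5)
Your overall route is the same as the paper's: expand $D_{4}$ via Corollary~\ref{cor:2.1}, pull out the guaranteed power of $2$ from $\{(x_{0}+x_{2})^{2}-(x_{1}+x_{3})^{2}\}\{(x_{0}-x_{2})^{2}+(x_{1}-x_{3})^{2}\}$, and finish by a parity/residue case check (the paper does the mod~$8$ translations via Remark~\ref{rem:3.3}). However, the tool you say everything reduces to is wrong as stated. Your lemma (i) claims that for odd $A,B$ one has $v_{2}(A^{2}-B^{2})=3$ iff $AB\equiv 3\pmod{4}$; this is false: take $A=3$, $B=5$, so $AB=15\equiv 3\pmod 4$, yet $A^{2}-B^{2}=-16$ has $v_{2}=4$. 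The correct criterion is mod~$8$: since an odd square is $1$ or $9 \pmod{16}$ according as the root is $\equiv \pm 1$ or $\equiv \pm 3 \pmod 8$, one has $v_{2}(A^{2}-B^{2})=3$ iff $AB\equiv \pm 3\pmod{8}$, and $v_{2}\geq 4$ iff $AB\equiv \pm 1\pmod 8$. Used verbatim, your lemma (i) contradicts the very statement you are proving: in (4) it would place the case $(2k+2m+1)(2l+2n+1)\equiv -1\pmod 8$ in $2^{4}\mathbb{Z}_{\rm odd}$ rather than $2^{5}\mathbb{Z}$, and it would likewise corrupt the $2^{6}\mathbb{Z}_{\rm odd}$ versus $2^{7}\mathbb{Z}$ boundary in (3). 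You do state the correct mod~$8$ criterion inside your discussion of (4), so the repair is simply to replace lemma (i) by that version (which is exactly what the paper's Remark~\ref{rem:3.3} encodes); with it, your finite parity check goes through along the same lines as the paper's proof.

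Two smaller inaccuracies to fix in the write-up: in (1), when $k+m\equiv l+n\equiv 1\pmod 2$ the sum-of-squares factor is only $\equiv 2\pmod 4$, so you do not gain ``$2^{2}$ from each factor''; the extra $2^{4}$ needed for the $2^{8}$ bound comes from $2^{3}$ out of the difference of two odd squares times $2^{1}$ out of the sum of two odd squares. Also your background claims that a sum (or difference) of two even squares is $\equiv 0\pmod 8$ are false in general (e.g.\ $2^{2}+4^{2}=20$, $4^{2}-2^{2}=12$); only divisibility by $4$ is automatic, and in (2) the finer information must be read off from the parities of $(k+m+1)/2$ and $(l+n+1)/2$, which is where the condition $(k+m)(l+n)\equiv -1\pmod 4$ enters, as you indicate. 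With these corrections your argument is sound and essentially reproduces the paper's proof.
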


To prove Lemma~$\ref{lem:3.2}$, 
we remark the following. 

\begin{rem}[{\cite[Remark~3.5]{https://doi.org/10.48550/arxiv.2209.12446}}]\label{rem:3.3}
For any $k, l, m, n \in \mathbb{Z}$, the following hold: 
\begin{enumerate}
\item[$(1)$] $(2 k + 2 l + 1) (2 m + 2 n + 1) \equiv 1 \pmod{8} \iff k - m \equiv - l + n \pmod{4}$; 
\item[$(2)$] $(2 k + 2 l + 1) (2 m + 2 n + 1) \equiv - 1 \pmod{8} \iff k + m \equiv - l - n - 1 \pmod{4}$;  
\item[$(3)$] $(2 k + 2 l + 1) (2 m + 2 n + 1) \equiv 3 \pmod{8} \iff k + m \equiv 1 - l - n \pmod{4}$; 
\item[$(4)$] $(2 k + 2 l + 1) (2 m + 2 n + 1) \equiv - 3 \pmod{8} \iff k - m \equiv 2 - l + n \pmod{4}$.  
\end{enumerate}
\end{rem}

\begin{proof}[Proof of Lemma~$\ref{lem:3.2}$]
We obtain (1) from 
\begin{align*}
D_{4}(2 k, 2 l, 2 m, 2 n) 
&= \left\{ (2 k + 2 m)^{2} - (2 l + 2 n)^{2} \right\} \left\{ (2 k - 2 m)^{2} + (2 l - 2 n)^{2} \right\} \\ 
&= 2^{4} \left\{ (k + m)^{2} - (l + n)^{2} \right\} \left\{ (k - m)^{2} + (l - n)^{2} \right\}. 
\end{align*}
We obtain (2) from 
\begin{align*}
&D_{4}(2 k + 1, 2 l + 1, 2 m + 1, 2 n + 1) \\ 
&\qquad = \left\{ (2 k + 2 m + 2)^{2} - (2 l + 2 n + 2)^{2} \right\} \left\{ (2 k - 2 m)^{2} + (2 l - 2 n)^{2} \right\} \\ 
&\qquad = 2^{4} \left\{ (k + m + 1)^{2} - (l + n + 1)^{2} \right\} \left\{ (k - m)^{2} + (l - n)^{2} \right\}
\end{align*}
and 
\begin{align*}
(k + m + 1)^{2} - (l + n + 1)^{2} &\in 
\begin{cases}
\mathbb{Z}_{\rm odd}, & k + m \not\equiv l + n \pmod{2}, \\ 
2^{2} \mathbb{Z}_{\rm odd}, & (k + m) (l + n) \equiv - 1 \pmod{4}, \\ 
2^{4} \mathbb{Z}, & (k + m) (l + n) \equiv 1 \pmod{4}, \\ 
2^{3} \mathbb{Z}, & k + m \equiv l + n \equiv 0 \pmod{2}, 
\end{cases} \\ 
(k - m)^{2} + (l - n)^{2} &\in 
\begin{cases}
\mathbb{Z}_{\rm odd}, & k + m \not\equiv l + n \pmod{2}, \\ 
2 \mathbb{Z}_{\rm odd}, & k + m \equiv l + n \equiv 1 \pmod{2}, \\ 
2^{2} \mathbb{Z}, & k + m \equiv l + n \equiv 0 \pmod{2}. 
\end{cases}
\end{align*}
We prove (3). 
From Remark~$\ref{rem:3.3}$, 
we have 
\begin{align*}
\left\{ (k + m)^{2} - (l + n + 1)^{2} \right\} \left\{ (k - m)^{2} + (l - n)^{2} \right\} \in 
\begin{cases}
2 \mathbb{Z}_{\rm odd}, & k - m \equiv l - n \equiv 1 \pmod{2}, \\ 
2^{3} \mathbb{Z}, & k \not\equiv m, \: l \equiv n \pmod{2}, \\ 
2^{3} \mathbb{Z}, & \text{when (I)}, \\ 
2^{2} \mathbb{Z}_{\rm odd}, & \text{when (II)}, 
\end{cases}
\end{align*}
where 
\begin{enumerate}
\item[(I)] $k \equiv m \pmod{2}$ and $(2 k + 2 l + 1) (2 m + 2 n + 1) \equiv \pm 1 \pmod{8}$; 
\item[(II)] $k \equiv m \pmod{2}$ and $(2 k + 2 l + 1) (2 m + 2 n + 1) \equiv \pm 3 \pmod{8}$. 
\end{enumerate}
Therefore, (3) is obtained from 
\begin{align*}
D_{4}(2 k, 2 l + 1, 2 m, 2 n + 1) 
&= \left\{ (2 k + 2 m)^{2} - (2 l + 2 n + 2)^{2} \right\} \left\{ (2 k - 2 m)^{2} + (2 l - 2 n)^{2} \right\} \\ 
&= 2^{4} \left\{ (k + m)^{2} - (l + n + 1)^{2} \right\} \left\{ (k - m)^{2} + (l - n)^{2} \right\}. 
\end{align*}
We prove (4). 
From Remark~$\ref{rem:3.3}$, we have 
\begin{align*}
k + m - l - n \in 
\begin{cases}
2^{2} \mathbb{Z}, & (2 k + 2 m + 1) (2 l + 2 n + 1) \equiv 1 \pmod{8}, \\ 
\mathbb{Z}_{\rm odd}, & (2 k + 2 m + 1) (2 l + 2 n + 1) \equiv - 1 \: \: \text{or} \: \: 3 \pmod{8}, \\ 
2 \mathbb{Z}_{\rm odd}, & (2 k + 2 m + 1) (2 l + 2 n + 1) \equiv - 3 \pmod{8}, 
\end{cases} \\ 
k + m + l + n + 1 \in 
\begin{cases}
\mathbb{Z}_{\rm odd}, & (2 k + 2 m + 1) (2 l + 2 n + 1) \equiv 1 \: \: \text{or} \: \: {- 3} \pmod{8}, \\ 
2^{2} \mathbb{Z}, & (2 k + 2 m + 1) (2 l + 2 n + 1) \equiv - 1 \pmod{8}, \\ 
2 \mathbb{Z}_{\rm odd}, & (2 k + 2 m + 1) (2 l + 2 n + 1) \equiv 3 \pmod{8}. 
\end{cases}
\end{align*}
Therefore, (4) is obtained from 
\begin{align*}
&D_{4}(2 k, 2 l, 2 m + 1, 2 n + 1) \\ 
&\: \: \: = \left\{ (2 k + 2 m + 1)^{2} - (2 l + 2 n + 1)^{2} \right\} \left\{ (2 k - 2 m - 1)^{2} + (2 l - 2 n - 1)^{2} \right\} \\ 
&\: \: \: = 2^{3} (k + m + l + n + 1) (k + m - l - n) \left\{ 2 (k - m) (k - m - 1) + 2 (l - n) (l - n - 1) + 1 \right\}. 
\end{align*}
\end{proof}

\section{Impossible odd numbers}
In this section, 
we consider impossible odd numbers. 
Let 
\begin{align*}
A &:= \left\{ (8 k - 3) (8 l - 3) (8 m - 3) (8 n - 3) \right. \mid \\ 
&\qquad \qquad \left. k \in \mathbb{Z}, \: 8 l - 3, 8 m - 3, 8 n - 3 \in P, \: k + l \not\equiv m + n \: \: ({\rm mod} \: 2) \right\}. 
\end{align*}

\begin{lem}\label{lem:4.1}
We have $S({\rm C}_{4}^{2}) \cap \mathbb{Z}_{\rm odd} \subset \left\{ 16 m + 1 \mid m \in \mathbb{Z} \right\} \cup A$. 
\end{lem}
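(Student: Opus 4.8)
The plan is to analyze the case where $D_{4\times 4}(\bm a)$ is odd, and to show that forcing oddness pins down the parities of the $a_i$ so tightly that $D_{4\times 4}(\bm a)$ must land in $\{16m+1\mid m\in\mathbb Z\}\cup A$. By Lemma~\ref{lem:2.7} we have $D_{4\times 4}(\bm a)\equiv D_4(\bm b)\equiv D_4(\bm c)\equiv\beta\overline\beta\equiv\gamma\overline\gamma\pmod 2$, so $D_{4\times 4}(\bm a)$ odd forces all five of these factors to be odd simultaneously. By the criterion extracted in the proof of Lemma~\ref{lem:2.7} (namely $D_4(x_0,x_1,x_2,x_3)$ is odd iff $x_0+x_1+x_2+x_3$ is odd), $D_4(\bm b)$ odd means $b_0+b_1+b_2+b_3$ is odd; since $b_i\equiv a_i+a_{i+4}+a_{i+8}+a_{i+12}\pmod 2$, this says $a_0+a_1+\cdots+a_{15}$ is odd. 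Likewise $\beta\overline\beta$ odd means $d_0+d_1+\cdots+d_7$ is odd, which is automatic from the same condition. So the first step is just to record that oddness of the product is equivalent to $\sum a_i$ odd, hence to $D_4(\bm b),D_4(\bm c)$ each odd and $\beta\overline\beta,\gamma\overline\gamma$ each odd.

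Next I would pin down which parity pattern among $b_0,b_1,b_2,b_3$ (and separately $c_0,c_1,c_2,c_3$) can occur. Since $b_0+b_1+b_2+b_3$ is odd, exactly one or exactly three of the $b_i$ are odd; using Remark~\ref{rem:2.2} (which lets us cyclically shift $(x_0,x_1,x_2,x_3)\mapsto(x_1,x_2,x_3,x_0)$ at the cost of a sign) I can reduce to the pattern (exactly one odd) and then, up to the cyclic symmetry, assume $b_0$ is odd and $b_1,b_2,b_3$ even, and similarly arrange the $c_i$. Here one must be careful that the cyclic shifts on $\bm b$, $\bm c$, and the $d_i$'s/$\alpha_i$'s are compatible — this is exactly the content of Remark~\ref{rem:2.6} and Remark~\ref{rem:2.4}, which tell us the relevant quantities transform consistently. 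With $b_0$ odd and $b_1,b_2,b_3$ even, Lemma~\ref{lem:3.1}~(1) gives $D_4(\bm b)\equiv 8b_2/?\ldots$ more precisely $D_4(b_0,b_1,b_2,b_3)\equiv 4b_2+1\pmod{16}$ type information; combining the $\bmod 16$ data for $D_4(\bm b)$ and $D_4(\bm c)$ from Lemma~\ref{lem:3.1} with the $2$-adic data for $\beta\overline\beta$ and $\gamma\overline\gamma$ (each a product of two sums of two squares, each $\equiv 1$ or $\equiv -3\pmod 8$, by Lemma~\ref{lem:2.5}) should give the claimed dichotomy.

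The decisive step is the analysis of $\beta\overline\beta$ and $\gamma\overline\gamma$ modulo $8$ (indeed modulo $16$). Each of the four factors appearing in Lemma~\ref{lem:2.5} is a sum of two squares, hence is $\equiv 0,1,2\pmod 4$ and, when odd, is $\equiv 1$ or $\equiv -3\pmod 8$; since the product $\beta\overline\beta$ is odd, all four such factors are odd, and by Corollary~\ref{cor:2.11} any factor that is $\equiv-3\pmod 8$ can be written as $(8k+1)(8l-3)$ with $8l-3\in P$, while any factor $\equiv 1\pmod 8$ contributes no prime of $P$ to odd order. Counting the number of $\equiv-3\pmod 8$ factors among the eight factors coming from $D_4(\bm b),D_4(\bm c),\beta\overline\beta,\gamma\overline\gamma$ (two from each, using Corollary~\ref{cor:2.1} to write $D_4(\bm b)$ and $D_4(\bm c)$ as products of a difference of two squares and a sum of two squares) and using Lemma~\ref{lem:2.10} to control the cross terms modulo $4$, I expect to find that $D_{4\times 4}(\bm a)\bmod 16$ is either $1$ (giving membership in $\{16m+1\}$) or $-7\equiv 9\pmod{16}$, and in the latter case that an \emph{even} total number of factors from $P$ are forced, with the precise parity bookkeeping matching the condition $j\not\equiv k+l+m+n\pmod 2$ in the definition of $A$. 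The main obstacle is precisely this last bookkeeping: tracking, through the $\bmod 4$ identities of Lemma~\ref{lem:2.10} and the $\bmod 16$ identities of Lemma~\ref{lem:3.1}, exactly how many of the eight sum-of-two-squares factors are $\equiv-3\pmod 8$ and reconciling that count with the parity constraint defining $A$; everything else is a finite case check over parity patterns of $(a_0,\ldots,a_{15})$.
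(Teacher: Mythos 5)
Your outline does follow the paper's general strategy (factor $D_{4\times 4}(\bm{a})=D_{4}(\bm{b})D_{4}(\bm{c})\beta\overline{\beta}\gamma\overline{\gamma}$, reduce mod $16$ via Lemma~\ref{lem:3.1}, extract primes of $P$ from sum-of-two-squares factors via Corollary~\ref{cor:2.11}), but as written it has genuine gaps, and the decisive one you flag yourself. A concrete misstep first: Remark~\ref{rem:2.2} only permits cyclic shifts of $(b_{0},b_{1},b_{2},b_{3})$, which cannot change how many of the $b_{i}$ are odd, so you cannot ``reduce to the pattern (exactly one odd)''; the one-odd and three-odd patterns are genuinely distinct cases, handled by Lemma~\ref{lem:3.1}~(1) and (2) respectively (this is exactly the split between the paper's Lemmas~\ref{lem:4.4} and \ref{lem:4.5}). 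The essential gap is the second half: since $A\subsetneq\{16m-7\mid m\in\mathbb{Z}\}$, no amount of mod-$16$ computation can by itself give $D_{4\times 4}(\bm{a})\in A$; one must actually exhibit a representation $(8j+1)(8k-3)(8l-3)(8m-3)(8n-3)$ with three factors in $P$ \emph{and} verify $j\not\equiv k+l+m+n\pmod{2}$. Your proposal never does this. What is needed, and what the paper supplies, is (i) the congruence $\beta\overline{\beta}-\gamma\overline{\gamma}\equiv 4(b_{0}b_{2}+b_{1}b_{3}+c_{0}c_{2}+c_{1}c_{3})\pmod{16}$ (Lemma~\ref{lem:4.6}, assembled from Lemmas~\ref{lem:2.8}--\ref{lem:2.10}), which both shows $\gamma\overline{\gamma}\equiv\beta\overline{\beta}\equiv-3\pmod{8}$ (hence two $P$-primes via Corollary~\ref{cor:2.11}) and ties the parity of the $(8j+1)$-part of $\beta\overline{\beta}\gamma\overline{\gamma}$ to $b_{0}b_{2}+b_{1}b_{3}+c_{0}c_{2}+c_{1}c_{3}$ mod $4$; (ii) an argument that the sum-of-two-squares factor of one of $D_{4}(\bm{b})$, $D_{4}(\bm{c})$ is $\equiv-3\pmod{8}$ (the paper shows $d_{1}+d_{3}$ must be odd, forcing $c_{1}\pm c_{3}\equiv 2\pmod{4}$, via Remark~\ref{rem:2.4}), which supplies the third $P$-prime; and (iii) the reconciliation of the two parity statements into $j\not\equiv k+l+m+n$. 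You explicitly defer all of this (``I expect to find\dots'', ``the main obstacle is precisely this last bookkeeping''), so the hard half of the lemma is not proved.

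Moreover, the counting scheme you sketch is not sound even as a plan: of the eight factors, the difference-of-two-squares factors such as $(b_{0}+b_{2})^{2}-(b_{1}+b_{3})^{2}$ carry no guarantee of a prime in $P$ when they are $\equiv-3\pmod{8}$ (e.g.\ $21=3\cdot 7$), so ``counting the $\equiv-3\pmod 8$ factors'' does not produce the representation defining $A$; only the sum-of-two-squares factors do, which is precisely why $A$ has three $P$-primes and one unconstrained $8k-3$. Also the assertion that a factor $\equiv 1\pmod{8}$ ``contributes no prime of $P$ to odd order'' is false ($65=5\cdot 13$), though this particular slip is harmless since surplus $P$-primes can be absorbed into the $8j+1$ or $8k-3$ parts.
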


To prove Lemma~$\ref{lem:4.1}$, 
we use the following five lemmas. 

\begin{lem}\label{lem:4.2}
Let $b_{0} + b_{2} \not\equiv b_{1} + b_{3} \pmod{2}$. 
Then we have the following: 
\begin{enumerate}
\item[$(1)$] If $(b_{0} b_{2} + b_{1} b_{3}, c_{0} c_{2} + c_{1} c_{3}) \equiv (0, 0) \: \: \text{or} \: \: (2, 2) \pmod{4}$, then 
$$
D_{4}(\bm{b}) D_{4}(\bm{c}) \in \left\{ (8 k + 1) (8 l + 1) \mid k, l \in \mathbb{Z}, \: \: k \equiv l \: \: ({\rm mod} \: 2) \right\}; 
$$
\item[$(2)$] If $(b_{0} b_{2} + b_{1} b_{3}, c_{0} c_{2} + c_{1} c_{3}) \equiv (0, 2) \: \: \text{or} \: \: (2, 0) \pmod{4}$, then 
$$
D_{4}(\bm{b}) D_{4}(\bm{c}) \in \left\{ (8 k + 1) (8 l + 1) \mid k, l \in \mathbb{Z}, \: \: k \not\equiv l \: \: ({\rm mod} \: 2) \right\}; 
$$
\item[$(3)$] If $(b_{0} b_{2} + b_{1} b_{3}, c_{0} c_{2} + c_{1} c_{3}) \equiv (1, 1) \: \: \text{or} \: \: (- 1, - 1) \pmod{4}$, then 
$$
D_{4}(\bm{b}) D_{4}(\bm{c}) \in \left\{ (8 k - 3) (8 l - 3) \mid k, l \in \mathbb{Z}, \: \: k \equiv l \: \: ({\rm mod} \: 2) \right\}; 
$$
\item[$(4)$] If $(b_{0} b_{2} + b_{1} b_{3}, c_{0} c_{2} + c_{1} c_{3}) \equiv (1, - 1) \: \: \text{or} \: \: (- 1, 1) \pmod{4}$, then 
$$
D_{4}(\bm{b}) D_{4}(\bm{c}) \in \left\{ (8 k - 3) (8 l - 3) \mid k, l \in \mathbb{Z}, \: \: k \not\equiv l \: \: ({\rm mod} \: 2) \right\}. 
$$
\end{enumerate}
\end{lem}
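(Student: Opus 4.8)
The plan is to reduce the whole statement to one congruence modulo $16$ for $D_{4}(\bm{b})$ (and symmetrically for $D_{4}(\bm{c})$), from which all four cases fall out by bookkeeping.

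\textbf{Step 1 (reduction).} I would first observe that the assertion is invariant under simultaneously replacing $\bm{b}$ by $(b_{1}, b_{2}, b_{3}, b_{0})$ and $\bm{c}$ by $(c_{1}, c_{2}, c_{3}, c_{0})$: by Remark~\ref{rem:2.2} the product $D_{4}(\bm{b}) D_{4}(\bm{c})$ is unchanged (the two sign changes cancel), the hypothesis $b_{0} + b_{2} \not\equiv b_{1} + b_{3} \pmod 2$ is unchanged, and the quantities $b_{0} b_{2} + b_{1} b_{3}$ and $c_{0} c_{2} + c_{1} c_{3}$ are unchanged, while the parities of $b_{0} + b_{2}$ and of $b_{1} + b_{3}$ get swapped. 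Hence I may assume $b_{0} + b_{2}$ is odd; by Remark~\ref{rem:2.4}~(1) then $c_{0} + c_{2}$ is odd as well, and both $b_{1} + b_{3}$ and $c_{1} + c_{3}$ are even, so $b_{0} b_{2}$ and $c_{0} c_{2}$ are even. At this point I would also record that the four cases are exhaustive: since $b_{1} \equiv b_{3} \pmod 2$ and $c_{1} \equiv c_{3} \pmod 2$ while $b_{0} b_{2}, c_{0} c_{2}$ are even, the pair $(b_{0} b_{2} + b_{1} b_{3}, c_{0} c_{2} + c_{1} c_{3})$ reduces modulo $4$ to an element of $\{0, 2\}^{2} \cup \{1, -1\}^{2}$.

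\textbf{Step 2 (key congruence).} The heart of the proof is the claim
\[
D_{4}(\bm{b}) \equiv 1 - 4 (b_{0} b_{2} + b_{1} b_{3}) \pmod{16}, \qquad D_{4}(\bm{c}) \equiv 1 - 4 (c_{0} c_{2} + c_{1} c_{3}) \pmod{16}.
\]
I would prove the first (the second is identical, using that $c_{0} + c_{2}$ is odd and $c_{1} + c_{3}$ even). Put $X := (b_{0} + b_{2})^{2}$ and $Y := (b_{1} + b_{3})^{2}$, so $X \equiv 1 \pmod 8$ and $Y \equiv 0 \pmod 4$; since $(b_{0} - b_{2})^{2} = X - 4 b_{0} b_{2}$ and $(b_{1} - b_{3})^{2} = Y - 4 b_{1} b_{3}$, Corollary~\ref{cor:2.1} gives
\[
D_{4}(\bm{b}) = (X - Y)\bigl(X + Y - 4 (b_{0} b_{2} + b_{1} b_{3})\bigr) = X^{2} - Y^{2} - 4 (b_{0} b_{2} + b_{1} b_{3})(X - Y).
\]
From $X \equiv 1 \pmod 8$ and $Y \equiv 0 \pmod 4$ one gets $X^{2} \equiv 1 \pmod{16}$, $Y^{2} \equiv 0 \pmod{16}$, and $X - Y \equiv 1 \pmod 4$, so the last term is $\equiv 4 (b_{0} b_{2} + b_{1} b_{3}) \pmod{16}$, which proves the claim.

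\textbf{Step 3 (case analysis).} With the key congruence in hand the rest is bookkeeping. In cases (1) and (2) the quantities $b_{0} b_{2} + b_{1} b_{3}$ and $c_{0} c_{2} + c_{1} c_{3}$ are even, so $D_{4}(\bm{b}), D_{4}(\bm{c}) \equiv 1 \pmod 8$; writing $D_{4}(\bm{b}) = 8 k + 1$ and $D_{4}(\bm{c}) = 8 l + 1$, the key congruence shows $k$ is even precisely when $b_{0} b_{2} + b_{1} b_{3} \equiv 0 \pmod 4$ (and odd when $\equiv 2$), and likewise $l$ in terms of $c_{0} c_{2} + c_{1} c_{3}$. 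Thus in case (1) the two quantities agree mod $4$, giving $k \equiv l \pmod 2$, whereas in case (2) they disagree, giving $k \not\equiv l \pmod 2$; either way $D_{4}(\bm{b}) D_{4}(\bm{c}) = (8k+1)(8l+1)$ with the required parity relation. In cases (3) and (4) the two quantities are odd, so $D_{4}(\bm{b}), D_{4}(\bm{c}) \equiv -3 \pmod 8$; writing $D_{4}(\bm{b}) = 8 k - 3$, $D_{4}(\bm{c}) = 8 l - 3$ and noting $8 k - 3$ is $\equiv 13$ or $5 \pmod{16}$ according as $k$ is even or odd, the key congruence shows $k$ is even precisely when $b_{0} b_{2} + b_{1} b_{3} \equiv 1 \pmod 4$ (and odd when $\equiv -1$), and likewise for $l$. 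Hence case (3) gives $k \equiv l \pmod 2$ and case (4) gives $k \not\equiv l \pmod 2$, which is exactly the assertion.

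\textbf{Main obstacle.} The only step that is not routine is the key congruence modulo $16$ in Step 2; I expect the reduction in Step 1 and the case analysis in Step 3 to be straightforward, the one thing needing care being to keep the chain (value mod $4$) $\leftrightarrow$ ($D_{4}$ mod $16$) $\leftrightarrow$ (parity of $k$) consistent in each of the four cases.
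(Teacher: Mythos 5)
Your proposal is correct, and it reaches the conclusion by a genuinely different (and somewhat more self-contained) route than the paper. The paper first pins down the parity pattern of the entries (exactly three of $b_{0}, b_{1}, b_{2}, b_{3}$ even in cases (1)--(2), exactly one even in cases (3)--(4)), uses the cyclic-shift invariance of Remark~\ref{rem:2.2} to normalize the position of the exceptional entry, and then quotes the imported mod-$16$ formulas of Lemma~\ref{lem:3.1}, namely $D_{4}(2k+1,2l,2m,2n) \equiv 8m+1$ and $D_{4}(2k,2l+1,2m+1,2n+1) \equiv 8(k+l+n)-3 \pmod{16}$, translating the conditions $b_{0}b_{2}+b_{1}b_{3} \bmod 4$ into conditions on $m$ resp.\ $k+l+n$. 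You instead use the same shift normalization (plus $b_{i} \equiv c_{i} \pmod 2$ from Remark~\ref{rem:2.4}) but replace Lemma~\ref{lem:3.1} by the single uniform congruence $D_{4}(\bm{b}) \equiv 1 - 4(b_{0}b_{2}+b_{1}b_{3}) \pmod{16}$, proved directly from the factorization in Corollary~\ref{cor:2.1}; your Step 2 computation is correct (with $X \equiv 1 \pmod 8$, $Y \equiv 0 \pmod 4$ one indeed gets $X^{2}-Y^{2} \equiv 1$ and $4(b_{0}b_{2}+b_{1}b_{3})(X-Y) \equiv 4(b_{0}b_{2}+b_{1}b_{3}) \pmod{16}$), and the bookkeeping in Step 3 correctly converts the residue mod $16$ into the parity of $k$ and $l$ in all four cases. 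Note that your key congruence is precisely a per-factor sharpening of the paper's Lemma~\ref{lem:4.3}, which the paper deduces as a corollary of Lemma~\ref{lem:4.2}; so your argument in effect reverses that logical order. What your approach buys is uniformity (one congruence handles all four cases, no case split on how many $b_{i}$ are even) and independence from the external Lemma~\ref{lem:3.1}; what the paper's approach buys is reuse of already-available formulas and a proof pattern that matches the later even-case analysis in Section~\ref{Section5}.
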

\begin{proof}
First, we prove (1) and (2). 
If $b_{0} b_{2} + b_{1} b_{3} \equiv 0 \pmod{2}$, 
then exactly three of $b_{0}, b_{1}, b_{2}, b_{3}$ are even. 
On the other hand, for any $k, l, m, n \in \mathbb{Z}$, 
$$
m \equiv 0 \pmod{2} \iff (2 k + 1) (2 m) + (2 l) (2 n) \equiv 0 \pmod{4}. 
$$
Therefore, from Remarks~$\ref{rem:2.2}$ and $\ref{rem:2.4}$~(1) and Lemma~$\ref{lem:3.1}$~(1), 
we have (1) and (2). 
Next, we prove (3) and (4). 
If $b_{0} b_{2} + b_{1} b_{3} \equiv 1 \pmod{2}$, 
then exactly one of $b_{0}, b_{1}, b_{2}, b_{3}$ is even. 
On the other hand, for any $k, l, m, n \in \mathbb{Z}$, 
$$
k + l + n \equiv 0 \pmod{2} \iff (2 k) (2 m + 1) + (2 l + 1) (2 n + 1) \equiv 1 \pmod{4}. 
$$
Therefore, from Remarks~$\ref{rem:2.2}$ and $\ref{rem:2.4}$~(1) and Lemma~$\ref{lem:3.1}$~(2), 
we have (3) and (4). 
\end{proof}

The following lemma is immediately obtained from Lemma~$\ref{lem:4.2}$. 

\begin{lem}\label{lem:4.3}
If $b_{0} + b_{2} \not\equiv b_{1} + b_{3} \pmod{2}$, 
then 
$$
D_{4}(\bm{b}) D_{4}(\bm{c}) \equiv 1 - 4 (b_{0} b_{2} + b_{1} b_{3} + c_{0} c_{2} + c_{1} c_{3}) \pmod{16}. 
$$
\end{lem}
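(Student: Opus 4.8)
The plan is to combine Lemma~$\ref{lem:4.2}$ with the elementary observations that $(8k+1)(8l+1) \equiv 1 + 8(k+l) \pmod{16}$ and $(8k-3)(8l-3) \equiv 9 - 24(k+l) \equiv 9 + 8(k+l) \pmod{16}$, reading off the residue of $D_{4}(\bm{b}) D_{4}(\bm{c})$ modulo $16$ in terms of the parity of $k+l$. In cases~(1) and~(3) of Lemma~$\ref{lem:4.2}$ we have $k \equiv l \pmod 2$, so $k+l$ is even and $D_{4}(\bm{b})D_{4}(\bm{c}) \equiv 1 \pmod{16}$ and $\equiv 9 \pmod{16}$ respectively; in cases~(2) and~(4) we have $k \not\equiv l \pmod 2$, so $k+l$ is odd and the values are $\equiv 9 \pmod{16}$ and $\equiv 1 \pmod{16}$ respectively. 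Thus, across the four cases, $D_{4}(\bm{b})D_{4}(\bm{c}) \equiv 1 \pmod{16}$ exactly when $(b_{0}b_{2}+b_{1}b_{3}, c_{0}c_{2}+c_{1}c_{3})$ is $(0,0)$, $(2,2)$, $(1,-1)$, or $(-1,1)$ mod $4$, and $\equiv 9 \pmod{16}$ in the complementary four cases $(0,2)$, $(2,0)$, $(1,1)$, $(-1,-1)$.

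Next I would verify that in every one of these eight cases the claimed formula $1 - 4(b_{0}b_{2}+b_{1}b_{3}+c_{0}c_{2}+c_{1}c_{3}) \pmod{16}$ produces the matching residue. Writing $B := b_{0}b_{2}+b_{1}b_{3}$ and $C := c_{0}c_{2}+c_{1}c_{3}$, one checks: if $(B,C) \equiv (0,0)$ or $(2,2) \pmod 4$ then $B + C \equiv 0 \pmod 4$, so $1 - 4(B+C) \equiv 1 \pmod{16}$; if $(B,C) \equiv (0,2)$ or $(2,0) \pmod 4$ then $B+C \equiv 2 \pmod 4$, so $1 - 4(B+C) \equiv 1 - 8 \equiv 9 \pmod{16}$; if $(B,C) \equiv (1,1)$ or $(-1,-1) \pmod 4$ then $B+C \equiv 2 \pmod 4$, giving $9 \pmod{16}$; and if $(B,C) \equiv (1,-1)$ or $(-1,1) \pmod 4$ then $B+C \equiv 0 \pmod 4$, giving $1 \pmod{16}$. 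In each case this agrees with the residue computed in the previous paragraph, so the two expressions are congruent mod $16$.

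The only subtlety is that Lemma~$\ref{lem:4.2}$ is stated for residues of $(B,C)$ modulo $4$, and I must make sure these eight cases genuinely exhaust all possibilities under the hypothesis $b_{0}+b_{2} \not\equiv b_{1}+b_{3} \pmod 2$. Under that hypothesis exactly one or exactly three of $b_{0},b_{1},b_{2},b_{3}$ are even (this is already used inside the proof of Lemma~$\ref{lem:4.2}$), so $B$ is odd in the first situation and even in the second; the same dichotomy holds for $C$ by Remark~$\ref{rem:2.4}$~(1), which forces $b_i \equiv c_i \pmod 2$ for each $i$, hence $B \equiv C \pmod 2$. Therefore $(B \bmod 2, C \bmod 2)$ is $(0,0)$ or $(1,1)$, and refining mod $4$ gives precisely the four even-even pairs $(0,0),(0,2),(2,0),(2,2)$ and the four odd-odd pairs $(1,1),(1,-1),(-1,1),(-1,-1)$ — exactly the eight cases handled by Lemma~$\ref{lem:4.2}$. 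I expect the main (minor) obstacle to be bookkeeping: making the parity-of-$k+l$ translation in each branch of Lemma~$\ref{lem:4.2}$ line up correctly with the sign conventions $8k-3$ versus $8k+1$, but this is a short finite check rather than a genuine difficulty.
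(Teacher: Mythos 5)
Your proposal is correct and takes essentially the same route as the paper: the paper simply declares Lemma~\ref{lem:4.3} to be ``immediately obtained'' from Lemma~\ref{lem:4.2}, and your case-by-case check (computing $(8k+1)(8l+1)$ and $(8k-3)(8l-3)$ modulo $16$ according to the parity of $k+l$, matching them with $1-4(b_{0}b_{2}+b_{1}b_{3}+c_{0}c_{2}+c_{1}c_{3})$, and verifying via Remark~\ref{rem:2.4}~(1) that the eight residue pairs exhaust all possibilities) is exactly the implicit verification the paper has in mind.
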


\begin{lem}\label{lem:4.5}
Let $b_{0} + b_{2} \not\equiv b_{1} + b_{3} \pmod{2}$ and $\beta \overline{\beta} \equiv - 3 \pmod{8}$. 
Then the following hold: 
\begin{enumerate}
\item[$(1)$] If $b_{0} b_{2} + b_{1} b_{3} + c_{0} c_{2} + c_{1} c_{3} \equiv 0 \pmod{4}$, then 
$$
D_{4}(\bm{b}) D_{4}(\bm{c}) \in \left\{ (8 k - 3) (8 l - 3) \mid k \in \mathbb{Z}, \: 8 l - 3 \in P, \: k \not\equiv l \: \: ({\rm mod} \: 2) \right\}; 
$$
\item[$(2)$] If $b_{0} b_{2} + b_{1} b_{3} + c_{0} c_{2} + c_{1} c_{3} \equiv 2 \pmod{4}$, then 
$$
D_{4}(\bm{b}) D_{4}(\bm{c}) \in \left\{ (8 k - 3) (8 l - 3) \mid k \in \mathbb{Z}, \: 8 l - 3 \in P, \: k \equiv l \: \: ({\rm mod} \: 2) \right\}. 
$$
\end{enumerate}
\end{lem}
\begin{proof}
From $b_{0} + b_{2} \not\equiv b_{1} + b_{3} \pmod{2}$ and Remark~$\ref{rem:2.4}$ (1), 
$d_{0} + d_{2} + d_{4} + d_{6} \not\equiv d_{1} + d_{3} + d_{5} + d_{7} \pmod{2}$ holds. 
Moreover, from $\beta \overline{\beta} \equiv - 3 \pmod{8}$ and Lemma~$\ref{lem:2.8}$, 
exactly one of $d_{0} + d_{2}$, $d_{1} + d_{3}$, $d_{4} + d_{6}$, $d_{5} + d_{7}$ is even. 
For any $k, l \in \mathbb{Z}$ satisfying $D_{4}(\bm{b}) D_{4}(\bm{c}) = (8 k - 3) (8 l - 3)$, 
it follows from Lemma~$\ref{lem:4.3}$ that $k \not\equiv l \pmod{2}$ if $b_{0} b_{2} + b_{1} b_{3} + c_{0} c_{2} + c_{1} c_{3} \equiv 0 \pmod{4}$; 
$k \equiv l \pmod{2}$ if $b_{0} b_{2} + b_{1} b_{3} + c_{0} c_{2} + c_{1} c_{3} \equiv 2 \pmod{4}$. 
Below, we prove that there exist $k \in \mathbb{Z}$ and $8 l - 3 \in P$ satisfying $D_{4}(\bm{b}) D_{4}(\bm{c}) = (8 k - 3) (8 l - 3)$. 
First, suppose that $b_{0} b_{2} + b_{1} b_{3} \equiv 0 \pmod{2}$. 
Then, exactly three of $b_{0}, b_{1}, b_{2}, b_{3}$ are even. 
From Remark~$\ref{rem:2.2}$, we have 
\begin{align*}
D_{4}(\bm{b}) D_{4}(\bm{c}) 
&= D_{4}(b_{1}, b_{2}, b_{3}, b_{0}) D_{4}(c_{1}, c_{2}, c_{3}, c_{0}) \\ 
&= D_{4}(b_{2}, b_{3}, b_{0}, b_{1}) D_{4}(c_{2}, c_{3}, c_{0}, c_{1}) \\ 
&= D_{4}(b_{3}, b_{0}, b_{1}, b_{2}) D_{4}(c_{3}, c_{0}, c_{1}, c_{2}). 
\end{align*}
Therefore, from Remark~$\ref{rem:2.4}$~(1), 
we may assume without loss of generality that $\bm{b} \equiv \bm{c} \equiv (1, 0, 0, 0) \pmod{2}$. 
From Remark~$\ref{rem:2.4}$, we have 
$$
2 (d_{1} + d_{3}) \equiv b_{1} + b_{3} + c_{1} + c_{3} \equiv b_{1} + b_{3} - c_{1} - c_{3} \equiv 2 (d_{5} + d_{7}) \pmod{4}. 
$$
Thus, $d_{1} + d_{3}$ must be odd since $d_{1} + d_{3} \equiv d_{5} + d_{7} \pmod{2}$. 
Hence, from $b_{1} + b_{3} + c_{1} + c_{3} \equiv 2 (d_{1} + d_{3}) \equiv 2 \pmod{4}$, 
we have $(b_{1} + b_{3}, c_{1} + c_{3}) \equiv (0, 2) \: \: \text{or} \: \: (2, 0) \pmod{4}$. 
We consider the case of $(b_{1} + b_{3}, c_{1} + c_{3}) \equiv (0, 2) \pmod{4}$. 
From $\bm{b} \equiv (1, 0, 0, 0) \pmod{2}$ and Lemma~$\ref{lem:3.1}$~(1), 
there exists $j \in \mathbb{Z}$ satisfying $D_{4}(\bm{b}) = 8 j + 1$. 
On the other hand, from $\bm{c} \equiv (1, 0, 0, 0) \pmod{2}$ and $c_{1} + c_{3} \equiv 2 \pmod{4}$, 
we have $c_{1} - c_{3} \equiv 2 \pmod{4}$. 
Thus, from Corollary~$\ref{cor:2.11}$, 
there exist $l' \in \mathbb{Z}$, $8 l - 3 \in P$ satisfying $(c_{0} - c_{2})^{2} + (c_{1} - c_{3})^{2} = (8 l' + 1) (8 l - 3)$. 
Also, there exists $k' \in \mathbb{Z}$ satisfying $(c_{0} + c_{2})^{2} - (c_{1} + c_{3})^{2} = 8 k' - 3$. 
From Corollary~$\ref{cor:2.1}$, 
we have $D_{4}(\bm{c}) = (8 k' - 3) (8 l' + 1) (8 l - 3)$. 
Therefore, there exists $k \in \mathbb{Z}$ satisfying $D_{4}(\bm{b}) D_{4}(\bm{c}) = (8 k - 3) (8 l - 3)$. 
In the same way, the case of $(b_{1} + b_{3}, c_{1} + c_{3}) \equiv (2, 0) \pmod{4}$ can also be proved. 
Next, suppose that $b_{0} b_{2} + b_{1} b_{3} \equiv 1 \pmod{2}$. 
Then, exactly one of $b_{0}, b_{1}, b_{2}, b_{3}$ is even. 
From Remarks~$\ref{rem:2.2}$ and $\ref{rem:2.4}$~(1), 
we may assume without loss of generality that $\bm{b} \equiv \bm{c} \equiv (0, 1, 1, 1) \pmod{2}$. 
In the same way as in the above, 
we have $(b_{1} + b_{3}, c_{1} + c_{3}) \equiv (0, 2) \: \: \text{or} \: \: (2, 0) \pmod{4}$. 
We consider the case of $(b_{1} + b_{3}, c_{1} + c_{3}) \equiv (0, 2) \pmod{4}$. 
From $\bm{c} \equiv (0, 1, 1, 1) \pmod{2}$ and Lemma~$\ref{lem:3.1}$~(2), 
there exists $k' \in \mathbb{Z}$ satisfying $D_{4}(\bm{c}) = 8 k' - 3$. 
On the other hand, from $\bm{b} \equiv (0, 1, 1, 1) \pmod{2}$ and $b_{1} + b_{3} \equiv 0 \pmod{4}$, 
we have $b_{1} - b_{3} \equiv 2 \pmod{4}$. 
Thus, from Corollary~$\ref{cor:2.11}$, 
there exist $l' \in \mathbb{Z}$, $8 l - 3 \in P$ satisfying $(b_{0} - b_{2})^{2} + (b_{1} - b_{3})^{2} = (8 l' + 1) (8 l - 3)$. 
Also, there exists $j \in \mathbb{Z}$ satisfying $(b_{0} + b_{2})^{2} - (b_{1} + b_{3})^{2} = 8 j + 1$. 
From Corollary~$\ref{cor:2.1}$, 
we have $D_{4}(\bm{b}) = (8 j + 1) (8 l' + 1) (8 l - 3)$. 
Therefore, there exists $k \in \mathbb{Z}$ satisfying $D_{4}(\bm{b}) D_{4}(\bm{c}) = (8 k - 3) (8 l - 3)$. 
In the same way, the case of $(b_{1} + b_{3}, c_{1} + c_{3}) \equiv (2, 0) \pmod{4}$ can also be proved. 
\end{proof}

\begin{lem}\label{lem:4.6}
Let $b_{0} + b_{2} \not\equiv b_{1} + b_{3} \pmod{2}$. 
Then we have 
$$
\beta \overline{\beta} - \gamma \overline{\gamma} \equiv 4 (b_{0} b_{2} + b_{1} b_{3} + c_{0} c_{2} + c_{1} c_{3}) \pmod{16}. 
$$
\end{lem}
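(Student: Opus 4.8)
The plan is to express $\beta\overline{\beta}$ and $\gamma\overline{\gamma}$ through the $d_{i}$'s via Lemma~$\ref{lem:2.8}$, to reduce their difference modulo $16$ within the $d_{i}$'s, and then to translate back to the $b_{i}$'s and $c_{i}$'s using Lemmas~$\ref{lem:2.9}$ and $\ref{lem:2.10}$. Concretely, I would set
\begin{align*}
S_{+} &:= (d_{0} + d_{2})^{2} + (d_{4} + d_{6})^{2} + (d_{1} + d_{3})^{2} + (d_{5} + d_{7})^{2}, \\
S_{-} &:= (d_{0} - d_{2})^{2} + (d_{4} - d_{6})^{2} + (d_{1} - d_{3})^{2} + (d_{5} - d_{7})^{2}, \\
T_{+} &:= (d_{0} + d_{2})(d_{1} + d_{3}) + (d_{4} + d_{6})(d_{5} + d_{7}), \\
T_{-} &:= (d_{0} - d_{2})(d_{5} - d_{7}) - (d_{4} - d_{6})(d_{1} - d_{3}),
\end{align*}
so that Lemma~$\ref{lem:2.8}$ reads $\beta\overline{\beta} = S_{+}^{2} - 4 T_{+}^{2}$ and $\gamma\overline{\gamma} = S_{-}^{2} - 4 T_{-}^{2}$, whence $\beta\overline{\beta} - \gamma\overline{\gamma} = (S_{+}^{2} - S_{-}^{2}) - 4(T_{+}^{2} - T_{-}^{2})$.

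For the term $S_{+}^{2} - S_{-}^{2}$, Lemma~$\ref{lem:2.9}$ gives $S_{+}^{2} - S_{-}^{2} = 8 Q R$, where $Q := d_{0}^{2} + d_{1}^{2} + \cdots + d_{7}^{2}$ and $R := d_{0} d_{2} + d_{4} d_{6} + d_{1} d_{3} + d_{5} d_{7}$. By Remark~$\ref{rem:2.4}$~(1) the hypothesis $b_{0} + b_{2} \not\equiv b_{1} + b_{3} \pmod{2}$ is equivalent to $d_{0} + d_{1} + \cdots + d_{7} \equiv 1 \pmod{2}$, and since $d_{i}^{2} \equiv d_{i} \pmod{2}$ this forces $Q$ to be odd; hence $8 Q R \equiv 8 R \pmod{16}$. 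Finally Lemma~$\ref{lem:2.10}$~(1) gives $2 R \equiv b_{0} b_{2} + b_{1} b_{3} + c_{0} c_{2} + c_{1} c_{3} \pmod{4}$, so $8 R \equiv 4(b_{0} b_{2} + b_{1} b_{3} + c_{0} c_{2} + c_{1} c_{3}) \pmod{16}$, and therefore $S_{+}^{2} - S_{-}^{2} \equiv 4(b_{0} b_{2} + b_{1} b_{3} + c_{0} c_{2} + c_{1} c_{3}) \pmod{16}$.

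It then remains to show $4(T_{+}^{2} - T_{-}^{2}) \equiv 0 \pmod{16}$, i.e. $T_{+}^{2} \equiv T_{-}^{2} \pmod{4}$, for which it suffices to prove $T_{+} \equiv T_{-} \pmod{2}$. Reducing modulo $2$ (where subtraction and addition agree), one computes $T_{+} - T_{-} \equiv (d_{0} + d_{2})(d_{1} + d_{3}) + (d_{4} + d_{6})(d_{5} + d_{7}) + (d_{0} + d_{2})(d_{5} + d_{7}) + (d_{4} + d_{6})(d_{1} + d_{3}) = (d_{0} + d_{2} + d_{4} + d_{6})(d_{1} + d_{3} + d_{5} + d_{7}) \pmod{2}$; by Remark~$\ref{rem:2.4}$~(1) the two factors are congruent to $b_{0} + b_{2}$ and $b_{1} + b_{3}$ modulo $2$, so the hypothesis makes exactly one of them even and hence the product even. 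Combining the two displays completes the proof. The main obstacle, such as it is, is recognizing that the hypothesis $b_{0} + b_{2} \not\equiv b_{1} + b_{3} \pmod{2}$ must be invoked twice: once to force $Q$ odd so that the factor $R$ survives modulo $2$ inside $8QR$, and once to force the cross term $T_{+} - T_{-}$ even; without it the $T$-contribution need not vanish modulo $16$.
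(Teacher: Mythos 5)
Your proof is correct and follows essentially the same route as the paper: decompose via Lemma~\ref{lem:2.8}, use Lemma~\ref{lem:2.9} with the oddness of $\sum d_i^2$ (forced by the hypothesis through Remark~\ref{rem:2.4}~(1)), kill the $4(T_+^2-T_-^2)$ term by showing $T_+\equiv T_-\pmod 2$, and finish with Lemma~\ref{lem:2.10}~(1). The only cosmetic difference is that you justify $T_+\equiv T_-\pmod 2$ by factoring the difference as $(d_0+d_2+d_4+d_6)(d_1+d_3+d_5+d_7)$, whereas the paper phrases the same parity fact via "exactly one or three of $d_0+d_2$, $d_4+d_6$, $d_1+d_3$, $d_5+d_7$ are even."
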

\begin{proof}
From Remark~$\ref{rem:2.4}$~(1), we have $
d_{0} + d_{2} + d_{4} + d_{6} \not\equiv d_{1} + d_{3} + d_{5} + d_{7} \pmod{2}$. 
Thus, $d_{0}^{2} + d_{2}^{2} + d_{4}^{2} + d_{6}^{2} + d_{1}^{2} + d_{3}^{2} + d_{5}^{2} + d_{7}^{2} \equiv 1 \pmod{2}$. 
Also, since exactly one or three of $d_{0} + d_{2}$, $d_{4} + d_{6}$, $d_{1} + d_{3}$, $d_{5} + d_{7}$ are even, 
it holds that 
\begin{align*}
&\left\{ (d_{0} + d_{2}) (d_{1} + d_{3}) + (d_{4} + d_{6}) (d_{5} + d_{7}) \right\}^{2} \\ 
&\qquad \equiv \left\{ (d_{0} - d_{2}) (d_{5} - d_{7}) - (d_{4} - d_{6}) (d_{1} - d_{3}) \right\}^{2} \pmod{4}. 
\end{align*}
From the above and Lemmas~$\ref{lem:2.8}$, $\ref{lem:2.9}$ and $\ref{lem:2.10}$~(1), we have 
\begin{align*}
\beta \overline{\beta} - \gamma \overline{\gamma} 
&\equiv 8 (d_{0}^{2} + d_{2}^{2} + d_{4}^{2} + d_{6}^{2} + d_{1}^{2} + d_{3}^{2} + d_{5}^{2} + d_{7}^{2}) (d_{0} d_{2} + d_{4} d_{6} + d_{1} d_{3} + d_{5} d_{7}) \\ 
&\equiv 8 (d_{0} d_{2} + d_{4} d_{6} + d_{1} d_{3} + d_{5} d_{7}) \\ 
&\equiv 4 (b_{0} b_{2} + b_{1} b_{3} + c_{0} c_{2} + c_{1} c_{3}) \pmod{16}. 
\end{align*}
\end{proof}

\begin{lem}\label{lem:4.7}
Let $b_{0} + b_{2} \not\equiv b_{1} + b_{3} \pmod{2}$ and $\beta \overline{\beta} \equiv - 3 \pmod{8}$. 
Then the following hold: 
\begin{enumerate}
\item[$(1)$] If $b_{0} b_{2} + b_{1} b_{3} + c_{0} c_{2} + c_{1} c_{3} \equiv 0 \pmod{4}$, then 
$$
\beta \overline{\beta} \gamma \overline{\gamma} \in \left\{ (8 j + 1) (8 m - 3) (8 n - 3) \mid j \in \mathbb{Z}, \: 8 m - 3, 8 n - 3 \in P, \: j \equiv m + n \: \: ({\rm mod} \: 2) \right\}; 
$$
\item[$(2)$] If $b_{0} b_{2} + b_{1} b_{3} + c_{0} c_{2} + c_{1} c_{3} \equiv 2 \pmod{4}$, then 
$$
\beta \overline{\beta} \gamma \overline{\gamma} \in \left\{ (8 j + 1) (8 m - 3) (8 n - 3) \mid j \in \mathbb{Z}, \: 8 m - 3, 8 n - 3 \in P, \: j \not\equiv m + n \: \: ({\rm mod} \: 2) \right\}. 
$$
\end{enumerate}
\end{lem}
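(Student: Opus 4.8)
The plan is to put $\beta\overline{\beta}$ and $\gamma\overline{\gamma}$ into the normal form $(8\ast+1)(8\ast-3)$ with the $\equiv-3$ factor lying in $P$, and then to read off the parity of $j$ from Lemma~\ref{lem:4.6}. Since $\beta\overline{\beta}\equiv-3\pmod 8$ it is odd and nonzero, so by Lemma~\ref{lem:2.5} it is a product of two odd sums of two squares. An odd sum of two squares is $\equiv1$ or $\equiv-3\pmod 8$, and since the product is $\equiv-3\pmod 8$, exactly one of the two factors is $\equiv1\pmod 8$ and the other is $\equiv-3\pmod 8$. Applying Corollary~\ref{cor:2.11} to the factor that is $\equiv-3\pmod 8$ and absorbing the $\equiv1\pmod 8$ factor into the first term, I obtain $s,m\in\mathbb{Z}$ with $8m-3\in P$ and $\beta\overline{\beta}=(8s+1)(8m-3)$. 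In both cases of the lemma the hypothesis gives $b_0b_2+b_1b_3+c_0c_2+c_1c_3\equiv0\pmod 2$, so Lemma~\ref{lem:4.6} yields $\beta\overline{\beta}-\gamma\overline{\gamma}\equiv0\pmod 8$ and hence $\gamma\overline{\gamma}\equiv-3\pmod 8$; running the same argument for $\gamma\overline{\gamma}$ produces $t,n\in\mathbb{Z}$ with $8n-3\in P$ and $\gamma\overline{\gamma}=(8t+1)(8n-3)$. Multiplying, $\beta\overline{\beta}\,\gamma\overline{\gamma}=(8j+1)(8m-3)(8n-3)$ where $8j+1=(8s+1)(8t+1)$, so $j\equiv s+t\pmod 2$.

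Next I would compare $s+t$ with $m+n$ modulo $2$. Reducing modulo $16$ gives $(8s+1)(8m-3)\equiv8(s+m)-3$ and $(8t+1)(8n-3)\equiv8(t+n)-3$, so $\beta\overline{\beta}-\gamma\overline{\gamma}\equiv8(s+m-t-n)\pmod{16}$. Comparing with Lemma~\ref{lem:4.6} gives $2(s+m-t-n)\equiv b_0b_2+b_1b_3+c_0c_2+c_1c_3\pmod 4$. In case~(1) the right-hand side is $\equiv0\pmod 4$, so $s+m-t-n$ is even and $j\equiv s+t\equiv m+n\pmod 2$; in case~(2) it is $\equiv2\pmod 4$, so $s+m-t-n$ is odd and $j\equiv s+t\not\equiv m+n\pmod 2$. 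This yields the two asserted memberships.

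I do not expect a conceptual obstacle; the steps to handle with care are (i) that the output of Corollary~\ref{cor:2.11} together with the spare $\equiv1\pmod 8$ factor genuinely collapses to a single $(8s+1)(8m-3)$, and (ii) the mod-$16$ bookkeeping connecting Lemma~\ref{lem:4.6} to the parity of $j$. The one step that uses the hypothesis essentially is the deduction $\gamma\overline{\gamma}\equiv-3\pmod 8$, which is what makes Corollary~\ref{cor:2.11} applicable to $\gamma\overline{\gamma}$ in the first place.
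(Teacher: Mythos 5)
Your argument is correct and follows essentially the same route as the paper: deduce $\gamma\overline{\gamma}\equiv-3\pmod 8$ from Lemma~\ref{lem:4.6}, write $\beta\overline{\beta}=(8s+1)(8m-3)$ and $\gamma\overline{\gamma}=(8t+1)(8n-3)$ with $8m-3,8n-3\in P$ via Corollary~\ref{cor:2.11}, and then fix the parity of $j$ by comparing $\beta\overline{\beta}-\gamma\overline{\gamma}$ modulo $16$ with Lemma~\ref{lem:4.6}. The only cosmetic differences are that the paper applies Corollary~\ref{cor:2.11} directly to $\beta\overline{\beta}$ (itself a sum of two squares) rather than to one factor from Lemma~\ref{lem:2.5}, and obtains the evenness of $b_0b_2+b_1b_3+c_0c_2+c_1c_3$ from Remark~\ref{rem:2.4}(1) instead of from the case hypotheses.
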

\begin{proof}
From Remark~$\ref{rem:2.4}$~(1) and Lemma~$\ref{lem:4.6}$, 
we have $\gamma \overline{\gamma} \equiv \beta \overline{\beta} \equiv - 3 \pmod{8}$. 
Therefore, from Corollary~$\ref{cor:2.11}$, 
there exist $m', n' \in \mathbb{Z}$, $8 m - 3, 8 n - 3 \in P$ satisfying 
$$
\beta \overline{\beta} = (8 m' + 1) (8 m - 3), \quad \gamma \overline{\gamma} = (8 n' + 1) (8 n - 3). 
$$
Let $j := 8 m' n' + m' + n'$. 
Then, $\beta \overline{\beta} \gamma \overline{\gamma} = (8 j + 1) (8 m - 3) (8 n - 3)$. 
From Lemma~$\ref{lem:4.6}$, 
$$
8 (m + m' + n + n') \equiv \beta \overline{\beta} - \gamma \overline{\gamma} \equiv 4 (b_{0} b_{2} + b_{1} b_{3} + c_{0} c_{2} + c_{1} c_{3}) \pmod{16}. 
$$
Therefore, we have $2 (j + m + n) \equiv 2 (m' + n' + m + n) \equiv b_{0} b_{2} + b_{1} b_{3} + c_{0} c_{2} + c_{1} c_{3} \pmod{4}$. 
\end{proof}

\begin{proof}[Proof of Lemma~$\ref{lem:4.1}$]
Let $D_{4 \times 4}(\bm{a}) = D_{4}(\bm{b}) D_{4}(\bm{c}) \beta \overline{\beta} \gamma \overline{\gamma} \in \mathbb{Z}_{\rm odd}$. 
Then, $b_{0} + b_{2} \not\equiv b_{1} + b_{3} \pmod{2}$ holds from $D_{4}(\bm{b}) \in \mathbb{Z}_{\rm odd}$ and Corollary~$\ref{cor:2.1}$. 
Since $\beta \overline{\beta}$ is an odd number expressible in the form $x^{2} + y^{2}$, 
we have $\beta \overline{\beta} \equiv 1 \pmod{4}$. 
Therefore, from Lemma~$\ref{lem:4.6}$, 
\begin{align*}
\beta \overline{\beta} \gamma \overline{\gamma} 
&\equiv \beta \overline{\beta} \left\{ \beta \overline{\beta} - 4 (b_{0} b_{2} + b_{1} b_{3} + c_{0} c_{2} + c_{1} c_{3}) \right\} \\ 
&\equiv (\beta \overline{\beta})^{2} - 4 (b_{0} b_{2} + b_{1} b_{3} + c_{0} c_{2} + c_{1} c_{3}) \pmod{16}. 
\end{align*}
From this and Lemma~$\ref{lem:4.3}$, we have 
\begin{align*}
D_{4}(\bm{b}) D_{4}(\bm{c}) \beta \overline{\beta} \gamma \overline{\gamma} 
&\equiv \left\{ 1 - 4 (b_{0} b_{2} + b_{1} b_{3} + c_{0} c_{2} + c_{1} c_{3}) \right\} \left\{ (\beta \overline{\beta})^{2} - 4 (b_{0} b_{2} + b_{1} b_{3} + c_{0} c_{2} + c_{1} c_{3}) \right\} \\ 
&\equiv (\beta \overline{\beta})^{2} - 8 (b_{0} b_{2} + b_{1} b_{3} + c_{0} c_{2} + c_{1} c_{3}) \pmod{16}. 
\end{align*}
Moreover, from Remark~$\ref{rem:2.4}$~(1), 
it follows that $D_{4}(\bm{b}) D_{4}(\bm{c}) \beta \overline{\beta} \gamma \overline{\gamma} \equiv (\beta \overline{\beta})^{2} \pmod{16}$. 
Therefore, if $\beta \overline{\beta} \equiv 1 \pmod{8}$, 
then $D_{4 \times 4}(\bm{a}) \in \left\{ 16 m + 1 \mid m \in \mathbb{Z} \right\}$. 
If $\beta \overline{\beta} \equiv - 3 \pmod{8}$, 
then we have $D_{4 \times 4}(\bm{a}) \in A'$ from Lemmas~$\ref{lem:4.5}$ and $\ref{lem:4.7}$, 
where 
\begin{align*}
A' &:= \left\{ (8 j + 1) (8 k - 3) (8 l - 3) (8 m - 3) (8 n - 3) \right. \mid \\ 
&\qquad \qquad \left. j, k \in \mathbb{Z}, \: 8 l - 3, 8 m - 3, 8 n - 3 \in P, \: j \not\equiv k + l + m + n \: \: ({\rm mod} \: 2) \right\}. 
\end{align*}
Since $A' = A$, 
the lemma is proved. 
\end{proof}

\section{Impossible even numbers}\label{Section5}

In this section, 
we consider impossible even numbers. 

\begin{lem}\label{lem:5.1}
The following hold: 
\begin{enumerate}
\item[$(1)$] $S({\rm C}_{4}^{2}) \cap 2 \mathbb{Z} \subset 2^{15} \mathbb{Z}$; 
\item[$(2)$] $S({\rm C}_{4}^{2}) \cap 2^{15} \mathbb{Z}_{\rm odd} \subset \left\{ 2^{15} p (2 m + 1) \mid p \in P, \: m \in \mathbb{Z} \right\}$. 
\end{enumerate}
\end{lem}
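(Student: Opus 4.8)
The plan is to exploit the factorization
$$
D_{4 \times 4}(\bm{a}) = D_{4}(\bm{b}) \, D_{4}(\bm{c}) \, \beta \overline{\beta} \, \gamma \overline{\gamma}
$$
recorded after Corollaries~\ref{cor:2.1} and \ref{cor:2.3} and to control the $2$-adic valuation of each of the four factors. Suppose $D_{4 \times 4}(\bm{a})$ is even. By Lemma~\ref{lem:2.7} every factor is then even, and by Remark~\ref{rem:2.4}~(1) the tuples $\bm{b}$ and $\bm{c}$ have the same parity pattern, so $D_{4}(\bm{b})$ and $D_{4}(\bm{c})$ fall under the same item of Lemma~\ref{lem:3.2}; reading that lemma off gives $v_{2}(D_{4}(\bm{b})), v_{2}(D_{4}(\bm{c})) \geq 4$. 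Inspecting the two-sums-of-two-squares expressions in Lemma~\ref{lem:2.5} (or Lemma~\ref{lem:2.8}) shows $v_{2}(\beta \overline{\beta}), v_{2}(\gamma \overline{\gamma}) \geq 2$, their exact values being governed by the parities of $d_{0} + d_{1} + d_{2} + d_{3}$, of $d_{0} - d_{2} - d_{5} + d_{7}$, and so on. These crude estimates only yield $v_{2}(D_{4 \times 4}(\bm{a})) \geq 12$, so the substance of part~(1) is to show that the four valuations cannot all be small at once.

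For this I would run a case analysis on the common parity pattern of $b_{0}, \ldots, b_{3}$ (equivalently of $c_{0}, \ldots, c_{3}$), which by the cyclic invariance of Remark~\ref{rem:2.2} and the flip of Remark~\ref{rem:2.6} reduces to a few patterns (all even, all odd, two opposite entries odd, two adjacent entries odd), each refined by the parities of the $d_{i}$. The key point is a set of linear identities linking the quantities that control the separate valuations: writing $X := a_{0} + a_{1} + a_{2} + a_{3}$, $Y := a_{4} + \ldots + a_{7}$, $Z := a_{8} + \ldots + a_{11}$, $W := a_{12} + \ldots + a_{15}$, one has $b_{0} + b_{1} + b_{2} + b_{3} = X + Y + Z + W$, $c_{0} + c_{1} + c_{2} + c_{3} = X - Y + Z - W$, $d_{0} + d_{1} + d_{2} + d_{3} = X - Z$, together with analogous relations for the other relevant sums. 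Thus, for instance, in the all-even case $v_{2}(D_{4}(\bm{b}))$ and $v_{2}(D_{4}(\bm{c}))$ can both be minimal ($=4$) only when $X + Z$ is even, which is exactly the configuration that forces $v_{2}(\beta \overline{\beta})$ and $v_{2}(\gamma \overline{\gamma})$ up to $\geq 4$. Carrying out this trade-off in each case --- using Lemma~\ref{lem:3.2} for the $D_{4}$-factors (which sometimes needs $b_{i} \bmod 4$ or $\bmod 8$, recovered via Remark~\ref{rem:2.4}) and Lemmas~\ref{lem:2.5}, \ref{lem:2.8}, \ref{lem:2.9} and \ref{lem:2.10} for $\beta \overline{\beta}$ and $\gamma \overline{\gamma}$ --- yields $v_{2}(D_{4 \times 4}(\bm{a})) \geq 15$ in every case, which is part~(1).

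For part~(2) I would revisit the cases in which this bound is attained, i.e.\ $v_{2}(D_{4 \times 4}(\bm{a})) = 15$. In each such case the valuations of the four factors are pinned to a single distribution, and the constraints forcing that distribution also pin down enough of the residues of the $d_{i}$ (and $b_{i}$, $c_{i}$) modulo small powers of $2$ to exhibit, among the genuine sums of two squares occurring in the factorization --- the two factors of $\beta \overline{\beta}$ and of $\gamma \overline{\gamma}$ from Lemma~\ref{lem:2.5}, and the factors $(b_{0} - b_{2})^{2} + (b_{1} - b_{3})^{2}$, $(c_{0} - c_{2})^{2} + (c_{1} - c_{3})^{2}$ of $D_{4}(\bm{b})$ and $D_{4}(\bm{c})$ --- at least one whose odd part is $\equiv -3 \pmod{8}$. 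An odd sum of two squares that is $\equiv -3 \pmod{8}$ must be divisible by a prime $\equiv -3 \pmod{8}$: by the two-squares criterion recalled before Corollary~\ref{cor:2.11} it is a product of primes $\equiv 1 \pmod{4}$ and even powers of primes $\equiv 3 \pmod{4}$, and every prime $\equiv 1 \pmod{8}$ and every such even power contributes $1$ modulo $8$, so some prime factor must be $\equiv 5 \pmod{8}$. Hence such a factor, and therefore $D_{4 \times 4}(\bm{a})/2^{15}$, is divisible by some $p \in P$, which gives $D_{4 \times 4}(\bm{a}) \in \{2^{15} p (2m + 1) \mid p \in P, \ m \in \mathbb{Z}\}$.

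I expect the main obstacle to be the combinatorial bookkeeping of the case analysis: there are many parity subcases, and in several of them Lemma~\ref{lem:3.2} only determines $v_{2}(D_{4}(\bm{b}))$ once $b_{i}$ is known modulo $4$ or $8$, which must be tracked back to the $a_{i}$ through the definitions and Remark~\ref{rem:2.4}. The most delicate point of part~(2) is to verify, uniformly over every case with $v_{2}(D_{4 \times 4}(\bm{a})) = 15$, that the sum-of-two-squares factor one extracts is genuinely $\equiv -3 \pmod{8}$ rather than $\equiv 1 \pmod{8}$; this distinction --- which cannot be read off from a congruence of $D_{4 \times 4}(\bm{a})$ modulo a power of $2$ alone, since $p(2m+1)$ realizes every odd residue class --- is precisely what pins down the prime in $P$.
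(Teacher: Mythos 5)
Your plan is essentially the paper's own proof: the same factorization $D_{4\times 4}(\bm{a}) = D_{4}(\bm{b})D_{4}(\bm{c})\beta\overline{\beta}\gamma\overline{\gamma}$, the same parity-case valuation trade-off (the paper packages it as Lemmas~\ref{lem:5.2} and \ref{lem:5.3}, driven by Lemma~\ref{lem:3.2} and Lemmas~\ref{lem:2.5}--\ref{lem:2.10}) to get part~(1), and for part~(2) the same identification, in each configuration attaining $2^{15}$, of a sum-of-two-squares factor whose odd part is $\equiv -3 \pmod 8$ and hence divisible by some $p \in P$ (the paper's Lemmas~\ref{lem:5.4}--\ref{lem:5.6}, using congruences like $\equiv -12 \pmod{32}$ and $\equiv -6 \pmod{16}$). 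What remains in your sketch is exactly the case-by-case bookkeeping those lemmas carry out, and your worked all-even case and the prime-extraction argument are correct.
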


To prove Lemma~$\ref{lem:5.1}$, 
we use the following five lemmas.

\begin{lem}\label{lem:5.2}
The following hold: 
\begin{enumerate}
\item[$(1)$] If $b_{0} \equiv b_{1} \equiv b_{2} \equiv b_{3} \equiv 0 \pmod{2}$, then 
\begin{align*}
D_{4}(\bm{b}) D_{4}(\bm{c}) \in 
\begin{cases}
2^{8} \mathbb{Z}_{\rm odd}, & b_{0} + b_{1} + b_{2} + b_{3} \equiv c_{0} + c_{1} + c_{2} + c_{3} \equiv 2 \pmod{4}, \\ 
2^{12} \mathbb{Z}, & \text{otherwise}; 
\end{cases}
\end{align*}
\item[$(2)$] If $b_{0} \equiv b_{1} \equiv b_{2} \equiv b_{3} \equiv 1 \pmod{2}$, then 
\begin{align*}
D_{4}(\bm{b}) D_{4}(\bm{c}) \in 
\begin{cases}
2^{8} \mathbb{Z}_{\rm odd}, & b_{0} + b_{1} + b_{2} + b_{3} \equiv c_{0} + c_{1} + c_{2} + c_{3} \equiv 2 \pmod{4}, \\ 
2^{11} \mathbb{Z}, & \text{otherwise}; 
\end{cases}
\end{align*}
\item[$(3)$] If $b_{0} \equiv b_{2} \not\equiv b_{1} \equiv b_{3} \pmod{2}$, then 
\begin{align*}
D_{4}(\bm{b}) D_{4}(\bm{c}) \in 
\begin{cases}
2^{10} \mathbb{Z}_{\rm odd}, & b_{0} - b_{2} \equiv b_{1} - b_{3} \equiv c_{0} - c_{2} \equiv c_{1} - c_{3} \equiv 2 \pmod{4}, \\ 
2^{11} \mathbb{Z}, & \text{otherwise}; 
\end{cases}
\end{align*}
\item[$(4)$] If $b_{0} + b_{2} \equiv b_{1} + b_{3} \equiv 1 \pmod{2}$, then 
\begin{align*}
D_{4}(\bm{b}) D_{4}(\bm{c}) \in 
\begin{cases}
2^{8} \mathbb{Z}_{\rm odd}, & (b_{0} + b_{2}) (b_{1} + b_{3}) \equiv \pm 3, \: \: (c_{0} + c_{2}) (c_{1} + c_{3}) \equiv \pm 3 \pmod{8}, \\ 
2^{9} \mathbb{Z}, & \text{otherwise}. 
\end{cases}
\end{align*}
\end{enumerate}
\end{lem}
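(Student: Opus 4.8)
The plan is to treat all four parts by a common reduction to Lemma~$\ref{lem:3.2}$. First, Remark~$\ref{rem:2.4}$~(1) gives $b_{i}\equiv c_{i}\pmod 2$, so $\bm{b}$ and $\bm{c}$ carry the same parity pattern; in each of (1)--(4) that pattern has an even number of odd entries, so both $D_{4}(\bm{b})$ and $D_{4}(\bm{c})$ are even. Second, by Remark~$\ref{rem:2.2}$ one may cyclically shift $\bm{b}$ and $\bm{c}$ simultaneously, since $D_{4}(\bm{b})D_{4}(\bm{c})=(-1)^{2}D_{4}(b_{1},b_{2},b_{3},b_{0})D_{4}(c_{1},c_{2},c_{3},c_{0})$; I would use this to bring the common parity pattern to the normal form required by the relevant part of Lemma~$\ref{lem:3.2}$. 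With $\bm{b}$ and $\bm{c}$ in that normal form one writes $D_{4}(\bm{b})$ and $D_{4}(\bm{c})$ in the explicit shapes of Lemma~$\ref{lem:3.2}$, reads off the $2$-adic valuation and the odd-part position of each factor, and multiplies.

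For (1) no shift is needed: $\bm{b}\equiv\bm{c}\equiv(0,0,0,0)\pmod 2$, and with $b_{0}=2k$, $b_{1}=2l$, $b_{2}=2m$, $b_{3}=2n$ the alternative $k+m\not\equiv l+n\pmod 2$ of Lemma~$\ref{lem:3.2}$~(1) is precisely $b_{0}+b_{1}+b_{2}+b_{3}\equiv 2\pmod 4$; hence $D_{4}(\bm{b})\in 2^{4}\mathbb{Z}_{\rm odd}$ in that case and $D_{4}(\bm{b})\in 2^{8}\mathbb{Z}$ otherwise, and likewise for $\bm{c}$ (which in any case lies in $2^{4}\mathbb{Z}$). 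Multiplying gives (1). Part (2) is identical with Lemma~$\ref{lem:3.2}$~(2): $b_{0}+b_{1}+b_{2}+b_{3}\equiv 2\pmod 4$ forces $D_{4}(\bm{b})\in 2^{4}\mathbb{Z}_{\rm odd}$ and otherwise $D_{4}(\bm{b})\in 2^{7}\mathbb{Z}$, while $D_{4}(\bm{c})\in 2^{4}\mathbb{Z}$ always, so the product lies in $2^{8}\mathbb{Z}_{\rm odd}$ when both sums are $\equiv 2\pmod 4$ and in $2^{11}\mathbb{Z}$ otherwise.

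For (3) and (4) I would first observe that the parity patterns allowed by the hypothesis form a single cyclic-shift orbit, namely $\{(0,1,0,1),(1,0,1,0)\}$ for (3) and $\{(0,0,1,1),(0,1,1,0),(1,1,0,0),(1,0,0,1)\}$ for (4); hence after a simultaneous shift we may assume $\bm{b}\equiv\bm{c}\equiv(0,1,0,1)\pmod 2$, respectively $(0,0,1,1)$. One must then check that the hypotheses in the statement are unchanged by the shift: a shift by one carries $b_{0}-b_{2}$ to $b_{1}-b_{3}$ and $b_{1}-b_{3}$ to $-(b_{0}-b_{2})$, and since $-2\equiv 2\pmod 4$ the condition $b_{0}-b_{2}\equiv b_{1}-b_{3}\equiv 2\pmod 4$ is preserved, while $(b_{0}+b_{2})(b_{1}+b_{3})$ is literally fixed. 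Now Lemma~$\ref{lem:3.2}$~(3), applied with $b_{0}=2k$, $b_{1}=2l+1$, $b_{2}=2m$, $b_{3}=2n+1$, identifies its first alternative ($2^{5}\mathbb{Z}_{\rm odd}$) with $b_{0}-b_{2}\equiv b_{1}-b_{3}\equiv 2\pmod 4$, and its other two alternatives both lie in $2^{6}\mathbb{Z}$; together with $D_{4}(\bm{c})\in 2^{5}\mathbb{Z}$ always and the mirror statement for $\bm{c}$, this gives (3). Part (4) runs the same way with Lemma~$\ref{lem:3.2}$~(4) and the identity $(b_{0}+b_{2})(b_{1}+b_{3})=(2k+2m+1)(2l+2n+1)$, so $D_{4}(\bm{b})\in 2^{4}\mathbb{Z}_{\rm odd}$ exactly when this product is $\equiv\pm 3\pmod 8$ and $D_{4}(\bm{b})\in 2^{5}\mathbb{Z}$ otherwise.

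These computations are routine; the point needing genuine care is the bookkeeping behind the ``otherwise'' clauses — one has to check that multiplying the worst-case valuation of $D_{4}(\bm{b})$ (namely $2^{8}$, $2^{7}$, $2^{6}$, $2^{5}$ in parts (1)--(4), the value for (3) resting on the three-way split of Lemma~$\ref{lem:3.2}$~(3)) against the unconditional minimal valuation of $D_{4}(\bm{c})$, and symmetrically, always lands in the asserted power of $2$ — together with verifying the shift-invariance of the hypotheses of (3) and (4). I expect that (elementary but error-prone) bookkeeping to be the main obstacle.
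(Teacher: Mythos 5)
Your proposal is correct and follows essentially the same route as the paper: reduce $\bm{c}$ to the parity pattern of $\bm{b}$ via Remark~\ref{rem:2.4}~(1), normalize the pattern with the simultaneous cyclic shift from Remark~\ref{rem:2.2}, translate the hypotheses into the case conditions of Lemma~\ref{lem:3.2}, and multiply the resulting $2$-adic valuations. The paper's own proof is just a terse version of this, and your extra checks (shift-invariance of the conditions in (3) and (4), the worst-case valuation bookkeeping) are exactly the details it leaves implicit.
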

\begin{proof}
For any $k, l, m, n \in \mathbb{Z}$, 
\begin{align*}
k + m \not\equiv l + n \pmod{2} \iff 2 k + 2 l + 2 m + 2 n \equiv 2 \pmod{4}, \\ 
k - m \equiv l - n \equiv 1 \pmod{2} \iff 2 k - 2 m \equiv 2 l - 2 n \equiv 2 \pmod{4}. 
\end{align*}
Therefore, from Remarks~$\ref{rem:2.2}$ and $\ref{rem:2.4}$~(1) and Lemma~$\ref{lem:3.2}$, 
the lemma is proved. 
\end{proof}

\begin{lem}\label{lem:5.3}
The following hold: 
\begin{enumerate}
\item[$(1)$] If $b_{0} + b_{2} \equiv b_{1} + b_{3} \equiv 0 \pmod{2}$, then 
\begin{align*}
\beta \overline{\beta} \gamma \overline{\gamma} \in 
\begin{cases}
2^{4} \mathbb{Z}_{\rm odd}, & b_{0} + b_{1} + b_{2} + b_{3} \not\equiv c_{0} + c_{1} + c_{2} + c_{3} \pmod{4}, \\ 
2^{8} \mathbb{Z}, & b_{0} + b_{1} + b_{2} + b_{3} \equiv c_{0} + c_{1} + c_{2} + c_{3} \pmod{4}; 
\end{cases}
\end{align*}
\item[$(2)$] If $b_{0} + b_{2} \equiv b_{1} + b_{3} \equiv 1 \pmod{2}$, then 
\begin{align*}
\beta \overline{\beta} \gamma \overline{\gamma} \in 
\begin{cases}
2^{7} \mathbb{Z}_{\rm odd}, & d \equiv 2 \pmod{4}, \\ 
2^{8} \mathbb{Z}, & d \equiv 0 \pmod{4}, 
\end{cases}
\end{align*}
\end{enumerate}
where 
$$
d := \left\{ (d_{0} + d_{2}) (d_{5} + d_{7}) + (d_{4} + d_{6}) (d_{1} + d_{3}) \right\} \left\{ (d_{0} - d_{2}) (d_{1} - d_{3}) + (d_{4} - d_{6}) (d_{5} - d_{7}) \right\}. 
$$
\end{lem}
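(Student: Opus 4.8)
The plan is to analyze $\beta\overline{\beta}\,\gamma\overline{\gamma}$ modulo successive powers of $2$ in the two parity regimes for $(b_0+b_2, b_1+b_3)$, just as Lemma~\ref{lem:5.2} does for $D_4(\bm b)D_4(\bm c)$. In both cases $\beta\overline{\beta}$ and $\gamma\overline{\gamma}$ are even (by Lemma~\ref{lem:2.7}), so each factor contributes at least $2^2$; the point is to identify exactly when extra powers of $2$ appear and to package the crossover condition in terms of the visible integers $b_i, c_i$ (or the auxiliary quantity $d$ in case~(2)).

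\textbf{Case (1).} Assume $b_0+b_2\equiv b_1+b_3\equiv 0\pmod 2$. By Remark~\ref{rem:2.4}~(1) this forces $d_0+d_2$, $d_1+d_3$, $d_4+d_6$, $d_5+d_7$ to be of matched parities in pairs; combined with $D_4(\bm b)D_4(\bm c)\in 2\mathbb Z$ we get that all four of $d_0+d_2, d_1+d_3, d_4+d_6, d_5+d_7$ have the same parity, or split two-and-two. Using Lemma~\ref{lem:2.5}, each of $\beta\overline{\beta}$, $\gamma\overline{\gamma}$ is a product of two sums of two squares, each sum being even, hence divisible by $2$; writing $d_0+d_2 = 2e_0$ etc.\ when all are even, or extracting a single factor of $2$ from each sum of two squares, one sees $\beta\overline{\beta}, \gamma\overline{\gamma}\in 2^2\mathbb Z$. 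The sharp statement comes from tracking whether the sum-of-two-squares factor lies in $2\mathbb Z_{\rm odd}$ or $4\mathbb Z$: a sum $u^2+v^2$ with $u\equiv v\pmod 2$ is in $2\mathbb Z_{\rm odd}$ iff $u\equiv v\equiv 1\pmod 2$, and in $4\mathbb Z$ iff $u\equiv v\equiv 0$. Translating these parity conditions on the $d_i\pm d_j$ back through Remark~\ref{rem:2.4}~(2),(3) into congruences on $b_0+b_1+b_2+b_3$ and $c_0+c_1+c_2+c_3$ modulo $4$ gives the dichotomy $2^4\mathbb Z_{\rm odd}$ versus $2^8\mathbb Z$. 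The identity in Lemma~\ref{lem:2.9} (and the analogous cross-term identity one derives the same way) is the tool that makes the $2^4$ versus $2^8$ split precise, since it shows $\beta\overline{\beta}$ and $\gamma\overline{\gamma}$ agree to higher $2$-adic order unless the relevant cross term $d_0d_2+d_4d_6+d_1d_3+d_5d_7$ is odd — and by Lemma~\ref{lem:2.10}~(1) that cross term's parity is detected by $b_0b_2+b_1b_3+c_0c_2+c_1c_3$, which in this regime is governed by $b_0+\cdots+b_3$ vs.\ $c_0+\cdots+c_3$ mod $4$.

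\textbf{Case (2).} Assume $b_0+b_2\equiv b_1+b_3\equiv 1\pmod 2$. Then by Remark~\ref{rem:2.4}~(1) and (2),(3), the four quantities $d_0+d_2, d_1+d_3, d_4+d_6, d_5+d_7$ are not all even; in fact one checks that each of $\beta\overline{\beta}$ and $\gamma\overline{\gamma}$ is, via Lemma~\ref{lem:2.8}, of the shape $($odd$)^2 - 4($something$)^2$ or $4\cdot(\text{odd})\cdot(\text{odd})$ depending on the finer parities, and the product $\beta\overline{\beta}\,\gamma\overline{\gamma}$ carries at least $2^7$. Here the natural crossover invariant is the quantity $d$ defined in the statement: expanding $\beta\overline{\beta}\,\gamma\overline{\gamma}$ via Lemma~\ref{lem:2.5} and collecting terms, the obstruction to divisibility by $2^8$ sits precisely in the two bilinear forms $(d_0+d_2)(d_5+d_7)+(d_4+d_6)(d_1+d_3)$ and $(d_0-d_2)(d_1-d_3)+(d_4-d_6)(d_5-d_7)$, so that $\beta\overline{\beta}\,\gamma\overline{\gamma}\in 2^7\mathbb Z_{\rm odd}$ when $d\equiv 2\pmod 4$ and $\in 2^8\mathbb Z$ when $d\equiv 0\pmod 4$. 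This is a direct $2$-adic computation once the factorizations of Lemma~\ref{lem:2.5} and the algebraic identities of Lemmas~\ref{lem:2.8} and \ref{lem:2.9} are in hand.

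\textbf{The main obstacle} will be Case~(2): unlike Case~(1), the clean reduction to the $b_i, c_i$ breaks down (the relevant cross terms are the "mixed" ones of Lemma~\ref{lem:2.10}~(2)--(5) rather than the single one of part~(1)), so one is forced to work with the $d_i$ directly and isolate the right bilinear combination $d$ by hand. The bookkeeping — verifying that exactly the stated power of $2$ is achieved and no more, uniformly over all sub-parities consistent with $b_0+b_2\equiv b_1+b_3\equiv 1\pmod 2$ — is where the real work lies; everything else is an application of Remark~\ref{rem:2.4}, Lemma~\ref{lem:2.7}, and the explicit formulas of Lemmas~\ref{lem:2.5}, \ref{lem:2.8}, \ref{lem:2.9}.
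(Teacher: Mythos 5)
Your sketch of case (1) follows the paper's route: analyse the parities of the four quantities $d_{0}+d_{2}$, $d_{1}+d_{3}$, $d_{4}+d_{6}$, $d_{5}+d_{7}$ in the factorization of Lemma~\ref{lem:2.5}, then translate the dichotomy $d_{0}+d_{2}\equiv d_{1}+d_{3}\pmod{2}$ or not into $b_{0}+b_{1}+b_{2}+b_{3}\equiv c_{0}+c_{1}+c_{2}+c_{3}\pmod{4}$ or not via Remark~\ref{rem:2.4}. However, your closing claim for (1) --- that Lemma~\ref{lem:2.9} together with Lemma~\ref{lem:2.10}~(1) ``makes the split precise'' because the split is governed by the parity of $d_{0}d_{2}+d_{4}d_{6}+d_{1}d_{3}+d_{5}d_{7}$ --- is false: that parity is not equivalent to the dichotomy. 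For instance $d_{0}=d_{2}=1$ and all other $d_{i}=0$ lies in the all-even (hence $2^{8}\mathbb{Z}$) branch, yet the cross term equals $1$. This remark is superfluous for (1), so it does not sink that case, but it should be deleted rather than relied on.

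The genuine gap is case (2), which you yourself call ``the main obstacle'' and leave as bookkeeping. You assert that the obstruction to divisibility by $2^{8}$ ``sits precisely'' in the two bilinear forms whose product is $d$, but you give no argument that $\beta\overline{\beta}\gamma\overline{\gamma}$ always lies in $2^{7}\mathbb{Z}$, nor that $d\bmod 4$ decides whether the quotient is odd; this is exactly the content of the lemma, so restating it is not a proof. The paper's proof does the missing work as follows: by Remark~\ref{rem:2.4}~(1) one has $d_{0}+d_{2}\not\equiv d_{4}+d_{6}$ and $d_{1}+d_{3}\not\equiv d_{5}+d_{7}\pmod{2}$, and by the symmetry of Remark~\ref{rem:2.6} one may assume $d_{0}+d_{2}$ even and $d_{4}+d_{6}$ odd; this leaves two subcases, (i) $d_{1}+d_{3}$ even, $d_{5}+d_{7}$ odd, and (ii) $d_{1}+d_{3}$ odd, $d_{5}+d_{7}$ even. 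In subcase (i) one checks $d\equiv d_{0}+d_{2}+d_{1}+d_{3}\pmod{4}$, that $\gamma\overline{\gamma}\in 2^{2}\mathbb{Z}_{\rm odd}$, and, from Lemma~\ref{lem:2.5} and the mod-$4$ classes of $d_{0}+d_{2}\pm(d_{1}+d_{3})$ and $d_{4}+d_{6}\pm(d_{5}+d_{7})$, that $\beta\overline{\beta}\in 2^{5}\mathbb{Z}_{\rm odd}$ if $d\equiv 2\pmod 4$ and $\beta\overline{\beta}\in 2^{6}\mathbb{Z}$ if $d\equiv 0\pmod 4$; subcase (ii) is symmetric with the roles of $\beta\overline{\beta}$ and $\gamma\overline{\gamma}$ exchanged and $d\equiv d_{0}-d_{2}+d_{5}-d_{7}\pmod{4}$. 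Without this (or an equivalent) subcase computation, your proposal establishes neither the lower bound $2^{7}$ nor the stated criterion in terms of $d$, so as it stands it is an outline with the decisive step missing.
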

\begin{proof}
We prove (1). 
Let $b_{0} + b_{2} \equiv b_{1} + b_{3} \equiv 0 \pmod{2}$. 
Then, from Remark~$\ref{rem:2.4}$~(1), we have $d_{0} + d_{2} + d_{4} + d_{6} \equiv d_{1} + d_{3} + d_{5} + d_{7} \equiv 0 \pmod{2}$. 
Also, from Remark~$\ref{rem:2.4}$~(1) and (2), 
\begin{align*}
&b_{0} + b_{1} + b_{2} + b_{3} \equiv c_{0} + c_{1} + c_{2} + c_{3} \pmod{4} \\ 
&\qquad \iff b_{0} + b_{2} + c_{0} + c_{2} \equiv b_{1} + b_{3} + c_{1} + c_{3} \pmod{4} \\ 
&\qquad \iff d_{0} + d_{2} \equiv d_{1} + d_{3} \pmod{2}. 
\end{align*}
Therefore, if $b_{0} + b_{1} + b_{2} + b_{3} \not\equiv c_{0} + c_{1} + c_{2} + c_{3} \pmod{4}$, 
then $d_{0} + d_{2} \equiv d_{4} + d_{6} \not\equiv d_{1} + d_{3} \equiv d_{5} + d_{7} \pmod{2}$. 
Thus, from Lemma~$\ref{lem:2.5}$, we have $\beta \overline{\beta} \gamma \overline{\gamma} \in 2^{4} \mathbb{Z}_{\rm odd}$. 
If $b_{0} + b_{1} + b_{2} + b_{3} \equiv c_{0} + c_{1} + c_{2} + c_{3} \pmod{4}$, 
then $d_{0} + d_{2} \equiv d_{4} + d_{6} \equiv d_{1} + d_{3} \equiv d_{5} + d_{7} \pmod{2}$. 
Thus, from Lemma~$\ref{lem:2.5}$, we have $\beta \overline{\beta} \gamma \overline{\gamma} \in 2^{8} \mathbb{Z}$. 
We prove (2). 
Let $b_{0} + b_{2} \equiv b_{1} + b_{3} \equiv 1 \pmod{2}$. 
Then, from Remark~$\ref{rem:2.4}$~(1), we have $d_{0} + d_{2} + d_{4} + d_{6} \equiv d_{1} + d_{3} + d_{5} + d_{7} \equiv 1 \pmod{2}$. 
From Remark~$\ref{rem:2.6}$, we may assume without loss of generality that $d_{0} + d_{2} \equiv 0$, $d_{4} + d_{6} \equiv 1 \pmod{2}$. 
We divide the proof into the following two cases: 
\begin{enumerate}
\item[(i)] $d_{0} + d_{2} \equiv d_{1} + d_{3} \equiv 0$, $d_{4} + d_{6} \equiv d_{5} + d_{7} \equiv 1 \pmod{2}$; 
\item[(ii)] $d_{0} + d_{2} \equiv d_{5} + d_{7} \equiv 0$, $d_{4} + d_{6} \equiv d_{1} + d_{3} \equiv 1 \pmod{2}$. 
\end{enumerate}
We remark that if (i), then $d \equiv d_{0} + d_{2} + d_{1} + d_{3} \pmod{4}$ holds, 
and if (ii), then $d \equiv d_{0} - d_{2} + d_{5} - d_{7} \pmod{4}$ holds. 
First, suppose that (i) holds. 
Then, from Lemma~$\ref{lem:2.5}$, we have $\gamma \overline{\gamma} \in 2^{2} \mathbb{Z}_{\rm odd}$. 
Also, from 
\begin{align*}
(d_{0} + d_{2} + d_{1} + d_{3}, d_{0} + d_{2} - d_{1} - d_{3}) &\equiv (0, 0) \: \: \text{or} \: \: (2, 2) \pmod{4}, \\ 
(d_{4} + d_{6} + d_{5} + d_{7}, d_{4} + d_{6} - d_{5} - d_{7}) &\equiv (0, 2) \: \: \text{or} \: \: (2, 0) \pmod{4} 
\end{align*}
and Lemma~$\ref{lem:2.5}$, we have 
\begin{align*}
\beta \overline{\beta} \in 
\begin{cases}
2^{5} \mathbb{Z}_{\rm odd}, &d \equiv d_{0} + d_{2} + d_{1} + d_{3} \equiv 2 \pmod{4}, \\ 
2^{6} \mathbb{Z}, & d \equiv d_{0} + d_{2} + d_{1} + d_{3} \equiv 0 \pmod{4}. 
\end{cases}
\end{align*}
Next, suppose that (ii) holds. 
Then, from Lemma~$\ref{lem:2.5}$, we have $\beta \overline{\beta} \in 2^{2} \mathbb{Z}_{\rm odd}$. 
Also, from 
\begin{align*}
(d_{0} - d_{2} - d_{5} + d_{7}, d_{0} - d_{2} + d_{5} - d_{7}) &\equiv (0, 0) \: \: \text{or} \: \: (2, 2) \pmod{4}, \\ 
(d_{4} - d_{6} + d_{1} - d_{3}, d_{4} - d_{6} - d_{1} + d_{3}) &\equiv (0, 2) \: \: \text{or} \: \: (2, 0) \pmod{4}
\end{align*}
and Lemma~$\ref{lem:2.5}$, we have 
\begin{align*}
\gamma \overline{\gamma} \in 
\begin{cases}
2^{5} \mathbb{Z}_{\rm odd}, & d \equiv d_{0} - d_{2} + d_{5} - d_{7} \equiv 2 \pmod{4}, \\ 
2^{6} \mathbb{Z}, & d \equiv d_{0} - d_{2} + d_{5} - d_{7} \equiv 0 \pmod{4}. 
\end{cases}
\end{align*}
\end{proof}

\begin{lem}\label{lem:5.4}
The following hold: 
\begin{enumerate}
\item[$(1)$] If $b_{0} \equiv b_{1} \equiv b_{2} \equiv b_{3} \equiv 1 \pmod{2}$ and $D_{4}(\bm{b}) D_{4}(\bm{c}) \in 2^{11} \mathbb{Z}_{\rm odd}$, then 
$$
b_{0} b_{2} + b_{1} b_{3} + c_{0} c_{2} + c_{1} c_{3} \equiv 2 \pmod{4}; 
$$
\item[$(2)$] If $b_{0} \equiv b_{2} \not\equiv b_{1} \equiv b_{3} \pmod{2}$, $D_{4}(\bm{b}) D_{4}(\bm{c}) \in 2^{11} \mathbb{Z}_{\rm odd}$ and $\beta \overline{\beta} \gamma \overline{\gamma} \in 2^{4} \mathbb{Z}_{\rm odd}$, then 
$$
b_{0} b_{2} + b_{1} b_{3} + c_{0} c_{2} + c_{1} c_{3} \equiv 2 \pmod{4}. 
$$
\end{enumerate}
\end{lem}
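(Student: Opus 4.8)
The plan for both parts is the same: feed the hypothesis $D_{4}(\bm{b}) D_{4}(\bm{c}) \in 2^{11}\mathbb{Z}_{\rm odd}$ into the piecewise descriptions of Lemma~\ref{lem:3.2} to pin down the residues of $b_{0}, \dots, b_{3}, c_{0}, \dots, c_{3}$ modulo $4$, and then read off $b_{0}b_{2} + b_{1}b_{3} + c_{0}c_{2} + c_{1}c_{3} \pmod{4}$. Throughout I would use $b_{i} \equiv c_{i} \pmod{2}$ (Remark~\ref{rem:2.4}(1)) and the congruences $b_{i} + c_{i} \equiv 2 d_{i}$, $b_{i} - c_{i} \equiv 2 d_{i + 4} \pmod{4}$ (Remark~\ref{rem:2.4}(2),(3)). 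Two symmetries help: the cyclic shift of the four indices inside each $C_{4}$-block (Remark~\ref{rem:2.2}) and the involution negating $a_{4}, \dots, a_{7}, a_{12}, \dots, a_{15}$ (which swaps $\bm{b} \leftrightarrow \bm{c}$ and negates $d_{4}, \dots, d_{7}$); both fix $D_{4}(\bm{b}) D_{4}(\bm{c})$, $\beta \overline{\beta} \gamma \overline{\gamma}$, and $b_{0}b_{2} + b_{1}b_{3} + c_{0}c_{2} + c_{1}c_{3}$, so I may freely relabel the two factors $D_{4}(\bm{b}), D_{4}(\bm{c})$ and, in part~(2), the parity pattern.

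For part~(1) every $b_{i}$, hence every $c_{i}$, is odd, so by Lemma~\ref{lem:3.2}(2) each of $D_{4}(\bm{b}), D_{4}(\bm{c})$ lies in $2^{4}\mathbb{Z}_{\rm odd} \cup 2^{7}\mathbb{Z}_{\rm odd} \cup 2^{9}\mathbb{Z}$; since the product is in $2^{11}\mathbb{Z}_{\rm odd}$ one factor lies in $2^{4}\mathbb{Z}_{\rm odd}$ and the other in $2^{7}\mathbb{Z}_{\rm odd}$, say $D_{4}(\bm{b}) \in 2^{4}\mathbb{Z}_{\rm odd}$ and $D_{4}(\bm{c}) \in 2^{7}\mathbb{Z}_{\rm odd}$. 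The $2^{4}\mathbb{Z}_{\rm odd}$ branch of Lemma~\ref{lem:3.2}(2) forces $b_{0} + b_{2} \not\equiv b_{1} + b_{3} \pmod{4}$, which for odd $b_{i}$ makes exactly one of $b_{0}b_{2}, b_{1}b_{3}$ congruent to $1$ and the other to $3$ modulo $4$, so $b_{0}b_{2} + b_{1}b_{3} \equiv 0 \pmod{4}$; the $2^{7}\mathbb{Z}_{\rm odd}$ branch forces $c_{0} + c_{2} \equiv c_{1} + c_{3} \equiv 0 \pmod{4}$, so $c_{0} \not\equiv c_{2}$ and $c_{1} \not\equiv c_{3} \pmod{4}$, hence $c_{0}c_{2} \equiv c_{1}c_{3} \equiv 3 \pmod{4}$ and $c_{0}c_{2} + c_{1}c_{3} \equiv 2 \pmod{4}$. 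Adding gives the claim; this part is routine.

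For part~(2), after the cyclic-shift reduction I may assume $b_{0}, b_{2}$ (hence $c_{0}, c_{2}$) even and $b_{1}, b_{3}$ (hence $c_{1}, c_{3}$) odd. By Lemma~\ref{lem:3.2}(3) each of $D_{4}(\bm{b}), D_{4}(\bm{c})$ lies in $2^{5}\mathbb{Z}_{\rm odd} \cup 2^{6}\mathbb{Z}_{\rm odd} \cup 2^{7}\mathbb{Z}$, so the hypothesis forces one factor into $2^{5}\mathbb{Z}_{\rm odd}$ and the other into $2^{6}\mathbb{Z}_{\rm odd}$; say $D_{4}(\bm{b}) \in 2^{5}\mathbb{Z}_{\rm odd}$ and $D_{4}(\bm{c}) \in 2^{6}\mathbb{Z}_{\rm odd}$. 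The $2^{5}\mathbb{Z}_{\rm odd}$ branch of Lemma~\ref{lem:3.2}(3) gives $b_{0} \not\equiv b_{2}$, $b_{1} \not\equiv b_{3} \pmod{4}$, so $b_{0}b_{2} \equiv 0$ and $b_{1}b_{3} \equiv 3 \pmod{4}$; the $2^{6}\mathbb{Z}_{\rm odd}$ branch gives $c_{0} \equiv c_{2} \pmod{4}$, so $c_{0}c_{2} \equiv 0 \pmod{4}$. The whole problem thus reduces to proving $c_{1} \not\equiv c_{3} \pmod{4}$, since then $b_{0}b_{2} + b_{1}b_{3} + c_{0}c_{2} + c_{1}c_{3} \equiv 0 + 3 + 0 + 3 \equiv 2 \pmod{4}$.

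To get $c_{1} \not\equiv c_{3} \pmod{4}$ I would descend to the $d_{i}$. First, $b_{0} + b_{2} \equiv 2 \pmod{4}$ (as $b_{0} \not\equiv b_{2}$) and $c_{0} + c_{2} \equiv 0 \pmod{4}$ (as $c_{0} \equiv c_{2}$) combine with Remark~\ref{rem:2.4}(2) to give $2(d_{0} + d_{2}) \equiv 2 \pmod{4}$, so $d_{0} + d_{2}$ is odd. Second, the hypothesis $\beta \overline{\beta} \gamma \overline{\gamma} \in 2^{4}\mathbb{Z}_{\rm odd}$ together with Lemma~\ref{lem:5.3}(1) gives $b_{0} + b_{1} + b_{2} + b_{3} \not\equiv c_{0} + c_{1} + c_{2} + c_{3} \pmod{4}$, which by Remark~\ref{rem:2.4}(3) means $d_{4} + d_{5} + d_{6} + d_{7}$ is odd; feeding in the parities $d_{0} \equiv d_{4}$, $d_{2} \equiv d_{6}$, $d_{1} \not\equiv d_{5}$, $d_{3} \not\equiv d_{7} \pmod{2}$ (Remark~\ref{rem:2.4}(1)) converts this into: $d_{0} + d_{1} + d_{2} + d_{3}$ is odd, hence $d_{1} + d_{3}$ is even, hence $d_{5} \equiv d_{7} \pmod{2}$. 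Finally, by Remark~\ref{rem:2.4}(3), $c_{1} - c_{3} \equiv (b_{1} - b_{3}) - 2(d_{5} - d_{7}) \equiv 2 \pmod{4}$, which is what we wanted. The one delicate point is this last chain: one must squeeze the parity of $d_{5} - d_{7}$ out of the valuation condition on $D_{4}(\bm{c})$, the condition $\beta \overline{\beta} \gamma \overline{\gamma} \in 2^{4}\mathbb{Z}_{\rm odd}$, and the ambient parities played against one another; everything before and after that step is mechanical. An equivalent route is through Lemma~\ref{lem:2.10}(1), which recasts the target as ``$d_{0}d_{2} + d_{4}d_{6} + d_{1}d_{3} + d_{5}d_{7}$ is odd''; in case~(2) this again comes down to $d_{1} \equiv d_{3} \pmod{2}$.
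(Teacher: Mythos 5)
Your proof is correct and follows essentially the same route as the paper: pin down which of $D_{4}(\bm{b})$, $D_{4}(\bm{c})$ carries $2^{4}$ vs.\ $2^{7}$ (resp.\ $2^{5}$ vs.\ $2^{6}$) via Lemma~\ref{lem:3.2}, then extract the mod-$4$ residues, using Lemma~\ref{lem:5.3} in part (2). The only differences are cosmetic: the paper gets $c_{1}+c_{3}\equiv 0\pmod 4$ directly from the sum congruence rather than detouring through the $d_{i}$, and handles the swapped case ``in the same way'' instead of via your explicit involution.
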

\begin{proof}
We prove (1). 
Let $b_{0} \equiv b_{1} \equiv b_{2} \equiv b_{3} \equiv 1 \pmod{2}$ and $D_{4}(\bm{b}) D_{4}(\bm{c}) \in 2^{11} \mathbb{Z}_{\rm odd}$. 
Then, from Remark~$\ref{rem:2.4}$~(1) and Lemma~$\ref{lem:3.2}$, 
either one of the following cases holds: 
\begin{enumerate}
\item[(i)] $D_{4}(\bm{b}) \in 2^{4} \mathbb{Z}_{\rm odd}$ and $D_{4}(\bm{c}) \in 2^{7} \mathbb{Z}_{\rm odd}$; 
\item[(ii)] $D_{4}(\bm{b}) \in 2^{7} \mathbb{Z}_{\rm odd}$ and $D_{4}(\bm{c}) \in 2^{4} \mathbb{Z}_{\rm odd}$. 
\end{enumerate}
If (i), then $b_{0} + b_{2} \not\equiv b_{1} + b_{3} \pmod{4}$ and $c_{0} + c_{2} \equiv c_{1} + c_{3} \equiv 0 \pmod{4}$ hold. 
Therefore, $(b_{0} b_{2} + b_{1} b_{3}, c_{0} c_{2} + c_{1} c_{3}) \equiv (0, 2) \pmod{4}$ holds. 
Thus, we have $b_{0} b_{2} + b_{1} b_{3} + c_{0} c_{2} + c_{1} c_{3} \equiv 2 \pmod{4}$. 
In the same way, the case~(ii) can also be proved. 
We prove (2). 
Let $b_{0} \equiv b_{2} \not\equiv b_{1} \equiv b_{3} \pmod{2}$, $D_{4}(\bm{b}) D_{4}(\bm{c}) \in 2^{11} \mathbb{Z}_{\rm odd}$ and $\beta \overline{\beta} \gamma \overline{\gamma} \in 2^{4} \mathbb{Z}_{\rm odd}$. 
Then, from Remarks~$\ref{rem:2.2}$ and $\ref{rem:2.4}$~(1), 
we may assume without loss of generality that $\bm{b} \equiv \bm{c} \equiv (0, 1, 0, 1) \pmod{2}$. 
From Lemma~$\ref{lem:3.2}$, 
either one of the following cases holds: 
\begin{enumerate}
\item[(iii)] $D_{4}(\bm{b}) \in 2^{5} \mathbb{Z}_{\rm odd}$ and $D_{4}(\bm{c}) \in 2^{6} \mathbb{Z}_{\rm odd}$; 
\item[(iv)] $D_{4}(\bm{b}) \in 2^{6} \mathbb{Z}_{\rm odd}$ and $D_{4}(\bm{c}) \in 2^{5} \mathbb{Z}_{\rm odd}$. 
\end{enumerate}
If (iii), then $b_{0} - b_{2} \equiv b_{1} - b_{3} \equiv 2 \pmod{4}$ and $c_{0} \equiv c_{2} \equiv 0 \: \: \text{or} \: \: 2 \pmod{4}$ hold. 
Therefore, $b_{0} b_{2} + b_{1} b_{3} \equiv - 1 \pmod{4}$ and $b_{0} + b_{1} + b_{2} + b_{3} \equiv 2 \pmod{4}$ hold. 
Also, since $b_{0} + b_{1} + b_{2} + b_{3} \not\equiv c_{0} + c_{1} + c_{2} + c_{3} \pmod{4}$ from Lemma~$\ref{lem:5.3}$, 
$c_{0} c_{2} + c_{1} c_{3} \equiv - 1 \pmod{4}$ holds. 
Thus, we have $b_{0} b_{2} + b_{1} b_{3} + c_{0} c_{2} + c_{1} c_{3} \equiv 2 \pmod{4}$. 
In the same way, the case~(iv) can also be proved. 
\end{proof}

\begin{lem}\label{lem:5.5}
If $b_{0} + b_{2} \equiv b_{1} + b_{3} \equiv 0 \pmod{2}$, $\beta \overline{\beta} \gamma \overline{\gamma} \in 2^{4} \mathbb{Z}_{\rm odd}$ and $b_{0} b_{2} + b_{1} b_{3} + c_{0} c_{2} + c_{1} c_{3} \equiv 2 \pmod{4}$, 
then $\beta \overline{\beta} \gamma \overline{\gamma} \in \left\{ 2^{4} p (2 m + 1) \mid p \in P, \: m \in \mathbb{Z} \right\}$. 
\end{lem}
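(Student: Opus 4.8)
The plan is to carry the three hypotheses through the identities established above until everything reduces to a single congruence for $\beta\overline{\beta}\,\gamma\overline{\gamma}/2^{4}$ modulo $8$, and then to quote Corollary~\ref{cor:2.11}.

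First I would fix the parities of the $d_{i}$. By Remark~\ref{rem:2.4}~(1), the hypothesis $b_{0}+b_{2}\equiv b_{1}+b_{3}\equiv 0\pmod 2$ says $d_{0}+d_{2}\equiv d_{4}+d_{6}$ and $d_{1}+d_{3}\equiv d_{5}+d_{7}\pmod 2$; by Lemma~\ref{lem:5.3}~(1) the remaining hypotheses force $b_{0}+b_{1}+b_{2}+b_{3}\not\equiv c_{0}+c_{1}+c_{2}+c_{3}\pmod 4$, equivalently (see its proof) $d_{0}+d_{2}\not\equiv d_{1}+d_{3}\pmod 2$. Thus exactly one of $\{d_{0}+d_{2},\,d_{4}+d_{6}\}$, $\{d_{1}+d_{3},\,d_{5}+d_{7}\}$ consists of even numbers. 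Moreover, by Lemma~\ref{lem:2.10}~(1), $b_{0}b_{2}+b_{1}b_{3}+c_{0}c_{2}+c_{1}c_{3}\equiv 2\pmod 4$ gives $d_{0}d_{2}+d_{4}d_{6}+d_{1}d_{3}+d_{5}d_{7}\equiv 1\pmod 2$. Since $\beta\overline{\beta}$, $\gamma\overline{\gamma}$ and this last parity are all invariant both under the substitution of Remark~\ref{rem:2.6} and under $(d_{0},d_{1},\ldots,d_{7})\mapsto(d_{1},d_{0},d_{3},d_{2},d_{5},d_{4},d_{7},d_{6})$ — for which one reads off from Lemma~\ref{lem:2.5} that $\beta\overline{\beta}$ is fixed and the two factors of $\gamma\overline{\gamma}$ are merely interchanged — and since the second substitution exchanges the two cases, I may assume $d_{0}+d_{2}\equiv d_{4}+d_{6}\equiv 0$ and $d_{1}+d_{3}\equiv d_{5}+d_{7}\equiv 1\pmod 2$. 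Then $d_{1}d_{3}$ and $d_{5}d_{7}$ are even, so $d_{0}d_{2}+d_{4}d_{6}$ is odd, and since $d_{0}\equiv d_{2}$, $d_{4}\equiv d_{6}\pmod 2$ this means $d_{0}+d_{4}$ is odd.

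Now write the two identities of Lemma~\ref{lem:2.8} as $\beta\overline{\beta}=S^{2}-4T^{2}$ and $\gamma\overline{\gamma}=\widetilde{S}^{2}-4\widetilde{T}^{2}$. In the present parity regime $S\equiv\widetilde{S}\equiv 2\pmod 4$ and $T,\widetilde{T}$ are even, so $\beta\overline{\beta}/4=u^{2}-4v^{2}$ and $\gamma\overline{\gamma}/4=\widetilde{u}^{2}-4\widetilde{v}^{2}$ with $u=S/2$, $\widetilde{u}=\widetilde{S}/2$ odd. The decisive observation is that $v=T/2\equiv\tfrac{d_{0}+d_{2}}{2}+\tfrac{d_{4}+d_{6}}{2}$ and $\widetilde{v}=\widetilde{T}/2\equiv\tfrac{d_{0}-d_{2}}{2}+\tfrac{d_{4}-d_{6}}{2}\pmod 2$ (because $d_{1}\pm d_{3}$, $d_{5}\pm d_{7}$ are odd), so $v+\widetilde{v}\equiv d_{0}+d_{4}\equiv 1\pmod 2$. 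Hence, modulo $8$,
\[
\frac{\beta\overline{\beta}\,\gamma\overline{\gamma}}{2^{4}}=(u^{2}-4v^{2})(\widetilde{u}^{2}-4\widetilde{v}^{2})\equiv 1-4\bigl(v^{2}+\widetilde{v}^{2}\bigr)\equiv 1-4\bigl(v+\widetilde{v}\bigr)\equiv -3\pmod 8 .
\]

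To finish, note that in this regime each of the two factors of $\beta\overline{\beta}$ and each of the two factors of $\gamma\overline{\gamma}$ in Lemma~\ref{lem:2.5} is a sum of two odd squares, hence twice a sum of two squares; therefore $\beta\overline{\beta}/4$, $\gamma\overline{\gamma}/4$, and so $\beta\overline{\beta}\,\gamma\overline{\gamma}/2^{4}$, are sums of two squares (a product of sums of two squares being one). Being a positive odd sum of two squares congruent to $-3\pmod 8$, $\beta\overline{\beta}\,\gamma\overline{\gamma}/2^{4}$ falls under Corollary~\ref{cor:2.11}, which provides $k\in\mathbb{Z}$ and $p=8l-3\in P$ with $\beta\overline{\beta}\,\gamma\overline{\gamma}/2^{4}=(8k+1)p$; hence $\beta\overline{\beta}\,\gamma\overline{\gamma}=2^{4}p(8k+1)\in\{2^{4}p(2m+1)\mid p\in P,\ m\in\mathbb{Z}\}$. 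I expect the main obstacle to be the bookkeeping of the reduction step — checking carefully that the two symmetries leave invariant every quantity the argument relies on, so that the reduction is legitimate — while the closing modular computation is short once the parities are in hand.
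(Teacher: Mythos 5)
Your proof is correct; I checked the points where it could have gone wrong. The substitution $(d_{0},\ldots,d_{7})\mapsto(d_{1},d_{0},d_{3},d_{2},d_{5},d_{4},d_{7},d_{6})$ does fix $\beta\overline{\beta}$ and merely interchanges the two factors of $\gamma\overline{\gamma}$ in Lemma~\ref{lem:2.5}, and it preserves both the parity pattern and the parity of $d_{0}d_{2}+d_{4}d_{6}+d_{1}d_{3}+d_{5}d_{7}$; since the conclusion depends only on the value of $\beta\overline{\beta}\gamma\overline{\gamma}$, which is a function of the $d_{i}$ alone, the reduction to the case $d_{0}+d_{2}\equiv d_{4}+d_{6}\equiv 0$, $d_{1}+d_{3}\equiv d_{5}+d_{7}\equiv 1\pmod 2$ is legitimate, and the subsequent computations ($v+\widetilde{v}\equiv d_{0}+d_{4}\equiv 1\pmod 2$, the mod-$8$ evaluation, and the two-odd-squares observation) are all sound. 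Your route differs from the paper's in execution: the paper derives the same parity pattern without any normalization, then uses Lemmas~\ref{lem:2.8}, \ref{lem:2.9} and \ref{lem:2.10}~(1) to get $\beta\overline{\beta}-\gamma\overline{\gamma}\equiv 8(b_{0}b_{2}+b_{1}b_{3}+c_{0}c_{2}+c_{1}c_{3})\equiv 16\pmod{32}$, and combines this with $\beta\overline{\beta}\equiv 4\pmod{16}$ to conclude that one of $\beta\overline{\beta},\gamma\overline{\gamma}$ is $\equiv -12\pmod{32}$ and hence carries a prime factor in $P$; you instead compute the odd part of the whole product modulo $8$ and apply Corollary~\ref{cor:2.11} to it, which requires the extra (easy) remark that this odd part is itself a sum of two squares by multiplicativity. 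The paper's version avoids the symmetry bookkeeping and pinpoints which of the two factors contains the prime; yours avoids Lemma~\ref{lem:2.9} and the mod-$32$ computation, uses Lemma~\ref{lem:2.10}~(1) only for a parity, and even yields the slightly sharper normal form $\beta\overline{\beta}\gamma\overline{\gamma}=2^{4}(8k+1)(8l-3)$.
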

\begin{proof}
Let $b_{0} + b_{2} \equiv b_{1} + b_{3} \equiv 0 \pmod{2}$ and $\beta \overline{\beta} \gamma \overline{\gamma} \in 2^{4} \mathbb{Z}_{\rm odd}$. 
From Remark~$\ref{rem:2.4}$~(1), 
we have 
$$
d_{0} + d_{2} + d_{4} + d_{6} \equiv d_{1} + d_{3} + d_{5} + d_{7} \equiv 0 \pmod{2}. 
$$
On the other hand, 
from Lemma~$\ref{lem:5.3}$, we have $b_{0} + b_{1} + b_{2} + b_{3} \not\equiv c_{0} + c_{1} + c_{2} + c_{3} \pmod{4}$. 
From this and Remark~$\ref{rem:2.4}$~(1) and (2), we have 
$$
2 d_{0} + 2 d_{2} \equiv b_{0} + b_{2} + c_{0} + c_{2} \not\equiv b_{1} + b_{3} + c_{1} + c_{3} \equiv 2 d_{1} + 2 d_{3} \pmod{4}. 
$$
Thus, $d_{0} + d_{2} \not\equiv d_{1} + d_{3} \pmod{2}$. 
From the above, we have $d_{0} + d_{2} \equiv d_{4} + d_{6} \not\equiv d_{1} + d_{3} \equiv d_{5} + d_{7} \pmod{2}$. 
Hence, 
\begin{align*}
&\left\{ (d_{0} + d_{2}) (d_{1} + d_{3}) + (d_{4} + d_{6}) (d_{5} + d_{7}) \right\}^{2} - \left\{ (d_{0} - d_{2}) (d_{5} - d_{7}) - (d_{4} - d_{6}) (d_{1} - d_{3}) \right\}^{2} \\ 
&\qquad \equiv 4 (d_{0} d_{2} + d_{4} d_{6} + d_{1} d_{3} + d_{5} d_{7}) \pmod{8}, \\ 
&(d_{0}^{2} + d_{2}^{2} + d_{4}^{2} + d_{6}^{2} + d_{1}^{2} + d_{3}^{2} + d_{5}^{2} + d_{7}^{2}) (d_{0} d_{2} + d_{4} d_{6} + d_{1} d_{3} + d_{5} d_{7}) \equiv 0 \pmod{4}. 
\end{align*}
Therefore, from Lemmas~$\ref{lem:2.8}$, $\ref{lem:2.9}$ and $\ref{lem:2.10}$~(1), 
\begin{align*}
\beta \overline{\beta} - \gamma \overline{\gamma} 
&\equiv 8 (d_{0}^{2} + d_{2}^{2} + d_{4}^{2} + d_{6}^{2} + d_{1}^{2} + d_{3}^{2} + d_{5}^{2} + d_{7}^{2}) (d_{0} d_{2} + d_{4} d_{6} + d_{1} d_{3} + d_{5} d_{7}) \\ 
&\qquad + 16 (d_{0} d_{2} + d_{4} d_{6} + d_{1} d_{3} + d_{5} d_{7}) \\ 
&\equiv 16 (d_{0} d_{2} + d_{4} d_{6} + d_{1} d_{3} + d_{5} d_{7}) \\ 
&\equiv 8 (b_{0} b_{2} + b_{1} b_{3} + c_{0} c_{2} + c_{1} c_{3}) \pmod{32}. 
\end{align*}
Let $b_{0} b_{2} + b_{1} b_{3} + c_{0} c_{2} + c_{1} c_{3} \equiv 2 \pmod{4}$. 
Then we have $\beta \overline{\beta} - \gamma \overline{\gamma} \equiv 16 \pmod{32}$. 
Note that $\beta \overline{\beta} \equiv 4$ or $- 12 \pmod{32}$ since $\beta \overline{\beta} \equiv 4 \pmod{16}$ from Lemma~$\ref{lem:2.8}$. 
If $\beta \overline{\beta} \equiv 4 \pmod{32}$, then $\gamma \overline{\gamma} \equiv 4 - 16 \equiv - 12 \pmod{32}$. 
This implies that $\gamma \overline{\gamma}$ has at least one prime factor of the form $8 k - 3$. 
If $\beta \overline{\beta} \equiv - 12 \pmod{32}$, then $\beta \overline{\beta}$ has at least one prime factor of the form $8 k - 3$. 
\end{proof}

\begin{lem}\label{lem:5.6}
Let $b_{0} + b_{2} \equiv b_{1} + b_{3} \equiv 1 \pmod{2}$, 
$D_{4}(\bm{b}) D_{4}(\bm{c}) \in 2^{8} \mathbb{Z}_{{\rm odd}}$ and $\beta \overline{\beta} \gamma \overline{\gamma} \in 2^{7} \mathbb{Z}_{{\rm odd}}$. 
Then $D_{4}(\bm{b}) D_{4}(\bm{c}) \beta \overline{\beta} \gamma \overline{\gamma} \in \left\{ 2^{15} p (2 m + 1) \mid p \in P, \: m \in \mathbb{Z} \right\}$. 
\end{lem}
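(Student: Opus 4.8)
The plan rests on the factorization $D_{4\times4}(\bm a)=D_4(\bm b)D_4(\bm c)\,\beta\overline\beta\,\gamma\overline\gamma$. Under the hypotheses the right-hand side lies in $2^{15}\mathbb Z_{\rm odd}$, so it suffices to show that its odd part $\dfrac{D_4(\bm b)D_4(\bm c)}{2^{8}}\cdot\dfrac{\beta\overline\beta\,\gamma\overline\gamma}{2^{7}}$ is divisible by some prime $p\in P$; then, writing the complementary odd cofactor as $2m+1$, the value equals $2^{15}p(2m+1)$, and since any odd multiple of a prime in $P$ is again such a value we are done. Because a prime dividing any one factor divides the whole product, it is enough to exhibit one odd divisor of this product that is a sum of two squares and $\equiv-3\pmod 8$: Corollary~\ref{cor:2.11} then supplies the required prime factor. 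The divisors I would track are the sum-of-two-squares factors produced by Corollary~\ref{cor:2.1} and Lemma~\ref{lem:2.5}, namely $\dfrac{(b_0-b_2)^2+(b_1-b_3)^2}{2}$, $\dfrac{(c_0-c_2)^2+(c_1-c_3)^2}{2}$, and the odd parts of $\beta\overline\beta$ and of $\gamma\overline\gamma$.

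First I would fix the $2$-adic picture. From $b_0+b_2\equiv b_1+b_3\equiv1\pmod 2$ and $D_4(\bm b)D_4(\bm c)\in 2^{8}\mathbb Z_{\rm odd}$, Lemma~\ref{lem:5.2}~(4) forces $(b_0+b_2)(b_1+b_3)\equiv\pm3$ and $(c_0+c_2)(c_1+c_3)\equiv\pm3\pmod 8$, whence Corollary~\ref{cor:2.1} gives $D_4(\bm b),D_4(\bm c)\in 2^{4}\mathbb Z_{\rm odd}$ with $D_4(\bm b)=2^{4}\cdot\dfrac{(b_0+b_2)^2-(b_1+b_3)^2}{8}\cdot\dfrac{(b_0-b_2)^2+(b_1-b_3)^2}{2}$, the last factor being an odd sum of two squares, hence $\equiv1\pmod 4$ (similarly for $\bm c$). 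From $b_0+b_2\equiv b_1+b_3\equiv1\pmod2$ and $\beta\overline\beta\gamma\overline\gamma\in2^{7}\mathbb Z_{\rm odd}$, Lemma~\ref{lem:5.3}~(2) gives $d\equiv2\pmod4$ and, after the reduction of Remark~\ref{rem:2.6}, leaves one of the two parity configurations of its proof; I will carry out the case $\beta\overline\beta\in2^{2}\mathbb Z_{\rm odd}$, $\gamma\overline\gamma\in2^{5}\mathbb Z_{\rm odd}$, the other being analogous. In this case Lemma~\ref{lem:2.5} exhibits $\beta\overline\beta/4$ and $\gamma\overline\gamma/2^{5}$ each as a product of two odd sums of two squares, so both are odd sums of two squares $\equiv1\pmod4$.

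The heart of the argument is a congruence computation deciding which of these odd sums of two squares is $\equiv-3\pmod 8$. Using Lemma~\ref{lem:2.8} to write $\beta\overline\beta$ and $\gamma\overline\gamma$ in the $d_i$, Lemma~\ref{lem:2.9} to reduce $\beta\overline\beta-\gamma\overline\gamma$ to $8\big(\sum_i d_i^{2}\big)(d_0d_2+d_4d_6+d_1d_3+d_5d_7)$ plus the controlled correction coming from the two cross products in Lemma~\ref{lem:2.8}, and Lemma~\ref{lem:2.10}~(1) to replace $d_0d_2+d_4d_6+d_1d_3+d_5d_7$ by $b_0b_2+b_1b_3+c_0c_2+c_1c_3$, one should compute $\beta\overline\beta\pmod{32}$ and $\gamma\overline\gamma\pmod{256}$ precisely enough, using the parity data above (which makes $\sum_i d_i^{2}$ odd and pins down mod $4$ the residues of the relevant $d_i\pm d_j$). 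The expected outcome mirrors the dichotomy in the proof of Lemma~\ref{lem:5.5}, where $\beta\overline\beta\equiv4\pmod{16}$ already forces one of $\beta\overline\beta$, $\gamma\overline\gamma$ to be $\equiv-3\pmod 8$ after the $2$'s are removed: at least one of $\beta\overline\beta/4$, $\gamma\overline\gamma/2^{5}$ — or, should a residual sub-case require it, one of $\dfrac{(b_0-b_2)^2+(b_1-b_3)^2}{2}$, $\dfrac{(c_0-c_2)^2+(c_1-c_3)^2}{2}$ via the conditions $(b_0+b_2)(b_1+b_3)\equiv\pm3$, $(c_0+c_2)(c_1+c_3)\equiv\pm3\pmod 8$ — is $\equiv-3\pmod 8$. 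Applying Corollary~\ref{cor:2.11} to that factor produces a prime $p\in P$ dividing $D_{4\times4}(\bm a)/2^{15}$, completing the proof.

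The step I expect to be the main obstacle is precisely this last modular bookkeeping: it branches on which of $\beta\overline\beta,\gamma\overline\gamma$ carries $2^{2}$ versus $2^{5}$ and on the sign patterns of $(b_0+b_2)(b_1+b_3)$ and $(c_0+c_2)(c_1+c_3)\pmod 8$, and in each branch one must propagate enough $2$-adic precision through Lemmas~\ref{lem:2.8}--\ref{lem:2.10} to land on residue $-3$ rather than $1\pmod 8$. I expect Remarks~\ref{rem:2.2} and~\ref{rem:2.6} to collapse most branches, leaving a short finite check.
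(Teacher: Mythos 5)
Your high-level strategy is the same as the paper's: extract the $2$-adic data from Lemma~\ref{lem:5.2}~(4) and Lemma~\ref{lem:5.3}~(2) (so $(b_{0}+b_{2})(b_{1}+b_{3})\equiv\pm 3$, $(c_{0}+c_{2})(c_{1}+c_{3})\equiv\pm 3\pmod 8$ and $d\equiv 2\pmod 4$), and then locate, among the sum-of-two-squares factors of $D_{4}(\bm{b})$, $D_{4}(\bm{c})$, $\beta\overline{\beta}$, $\gamma\overline{\gamma}$, one whose odd part is $\equiv -3\pmod 8$, which forces a prime factor in $P$. That framing, and the final packaging as $2^{15}p(2m+1)$, are correct.

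The gap is that the decisive step is never carried out, and the tools you name would not carry it. You propose to settle the question by computing $\beta\overline{\beta}-\gamma\overline{\gamma}$ via Lemma~\ref{lem:2.9} and Lemma~\ref{lem:2.10}~(1); but those track the cross-term $d_{0}d_{2}+d_{4}d_{6}+d_{1}d_{3}+d_{5}d_{7}$, i.e.\ the quantity relevant to Lemma~\ref{lem:5.5}, and it is not the one that decides this lemma. The paper instead splits on $(b_{0}b_{3}+b_{2}b_{1},\,c_{0}c_{3}+c_{2}c_{1})$ and $(b_{0}b_{1}+b_{2}b_{3},\,c_{0}c_{1}+c_{2}c_{3})$ modulo $4$: if one of these sums is $\equiv 0\pmod 4$, then $(b_{0}-b_{2})(b_{1}-b_{3})=(b_{0}+b_{2})(b_{1}+b_{3})-2(b_{0}b_{3}+b_{2}b_{1})\equiv\pm 3\pmod 8$, so $(b_{0}-b_{2})^{2}+(b_{1}-b_{3})^{2}\equiv-6\pmod{16}$ and the prime from $P$ sits in $D_{4}(\bm{b})$ (or $D_{4}(\bm{c})$) --- this is exactly the situation in which your ``expected outcome'' that one of $\beta\overline{\beta}/4$, $\gamma\overline{\gamma}/2^{5}$ is $\equiv-3\pmod 8$ can fail, and your fallback to the $D_{4}$ factors comes with no criterion for when it is needed; if instead both sums are $\equiv 2\pmod 4$, the paper uses the parity subcases of Lemma~\ref{lem:5.3}'s proof together with Lemma~\ref{lem:2.10}~(2)--(5) and $d\equiv 2\pmod 4$ to show, via Lemma~\ref{lem:2.8}, that the factor among $\beta\overline{\beta},\gamma\overline{\gamma}$ lying in $2^{2}\mathbb{Z}_{\rm odd}$ is in fact $\equiv-12\pmod{32}$, whence its odd part is $\equiv-3\pmod 8$. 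None of this case analysis (nor the identities of Lemma~\ref{lem:2.10} beyond part~(1)) appears in your plan, and you yourself flag this ``modular bookkeeping'' as the unresolved obstacle; as written, the proposal is an outline whose central claim is asserted rather than proved.
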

\begin{proof}
From Lemmas~$\ref{lem:5.2}$ and $\ref{lem:5.3}$, 
we have 
$$
(b_{0} + b_{2}) (b_{1} + b_{3}) \equiv \pm 3, \quad (c_{0} + c_{2}) (c_{1} + c_{3}) \equiv \pm 3 \pmod{8}, \quad d \equiv 2 \pmod{4}, 
$$
where 
$$
d := \left\{ (d_{0} + d_{2}) (d_{5} + d_{7}) + (d_{4} + d_{6}) (d_{1} + d_{3}) \right\} \left\{ (d_{0} - d_{2}) (d_{1} - d_{3}) + (d_{4} - d_{6}) (d_{5} - d_{7}) \right\}. 
$$
We divide the proof into the following cases: 
\begin{enumerate}
\item[(i)] $(b_{0} b_{3} + b_{2} b_{1}, c_{0} c_{3} + c_{2} c_{1}) \equiv (0, 0), \: \: (0, 2) \: \: \text{or} \: \: (2, 0) \pmod{4}$; 
\item[(ii)] $(b_{0} b_{3} + b_{2} b_{1}, c_{0} c_{3} + c_{2} c_{1}) \equiv (2, 2) \pmod{4}$; 
\item[(iii)] $(b_{0} b_{1} + b_{2} b_{3}, c_{0} c_{1} + c_{2} c_{3}) \equiv (0, 0), \: \: (0, 2) \: \: \text{or} \: \: (2, 0) \pmod{4}$; 
\item[(iv)] $(b_{0} b_{1} + b_{2} b_{3}, c_{0} c_{1} + c_{2} c_{3}) \equiv (2, 2) \pmod{4}$. 
\end{enumerate}
First, we consider the case (i). 
If $b_{0} b_{3} + b_{2} b_{1} \equiv 0 \pmod{4}$, 
then 
$$
(b_{0} - b_{2}) (b_{1} - b_{3}) = (b_{0} + b_{2}) (b_{1} + b_{3}) - 2 (b_{0} b_{3} + b_{2} b_{1}) \equiv \pm 3 \pmod{8}. 
$$
Thus, $(b_{0} - b_{2})^{2} + (b_{1} - b_{3})^{2} \equiv - 6 \pmod{16}$. 
It implies that $(b_{0} - b_{2})^{2} + (b_{1} - b_{3})^{2}$ has at least one prime factor of the form $8 k - 3$. 
That is, $D_{4}(\bm{b})$ has at least one prime factor of the form $8 k - 3$. 
In the same way, we can prove that $D_{4}(\bm{c})$ has at least one prime factor of the form $8 k - 3$ when $c_{0} c_{3} + c_{2} c_{1} \equiv 0 \pmod{4}$. 
Therefore, if (i), then 
$$
D_{4}(\bm{b}) D_{4}(\bm{c}) \in \left\{ 2^{8} p (2 m + 1) \mid p \in P, \: m \in \mathbb{Z} \right\}. 
$$
We can obtain the same conclusion for the case~(iii). 
Next, we consider the case~(ii). 
From $b_{0} + b_{2} \equiv b_{1} + b_{3} \equiv 1 \pmod{2}$ and Remark~$\ref{rem:2.4}$~(1), 
we have $d_{0} + d_{2} + d_{4} + d_{6} \equiv d_{1} + d_{3} + d_{5} + d_{7} \equiv 1 \pmod{2}$. 
From Remark~$\ref{rem:2.6}$, we may assume without loss of generality that $d_{0} + d_{2} \equiv 0$, $d_{4} + d_{6} \equiv 1 \pmod{2}$. 
Then, either one of the following cases holds: 
\begin{enumerate}
\item[(ii-1)] $d_{0} + d_{2} \equiv d_{1} + d_{3} \equiv 0$, \: $d_{4} + d_{6} \equiv d_{5} + d_{7} \equiv 1 \pmod{2}$; 
\item[(ii-2)] $d_{0} + d_{2} \equiv d_{5} + d_{7} \equiv 0$, \: $d_{4} + d_{6} \equiv d_{1} + d_{3} \equiv 1 \pmod{2}$. 
\end{enumerate}
Suppose that (ii) and (ii-1) hold. 
Then, from Lemma~$\ref{lem:2.10}$~(2), 
it follows that 
\begin{align*}
(d_{0} - d_{2}) + (d_{1} - d_{3}) 
&\equiv (d_{0} - d_{2}) (d_{5} - d_{7}) + (d_{4} - d_{6}) (d_{1} - d_{3}) \\ 
&\equiv (d_{0} + d_{2}) (d_{5} + d_{7}) + (d_{4} + d_{6}) (d_{1} + d_{3}) \\ 
&\qquad + 2 (d_{0} d_{7} + d_{2} d_{5} + d_{4} d_{3} + d_{6} d_{1}) \\ 
&\equiv d + (b_{0} b_{3} + b_{2} b_{1}) - (c_{0} c_{3} + c_{2} c_{1}) \\ 
&\equiv 2 \pmod{4}. 
\end{align*}
Therefore, from Lemma~$\ref{lem:2.8}$, we have 
$\gamma \overline{\gamma} \equiv 4 - 16 \equiv - 12 \pmod{32}$. 
Thus, $\gamma \overline{\gamma}$ has at least one prime factor of the form $8 k - 3$. 
Suppose that (ii) and (ii-2) hold. 
Then, from Lemma~$\ref{lem:2.10}$~(3), 
it follows that 
\begin{align*}
(d_{0} + d_{2}) + (d_{5} + d_{7}) 
&\equiv (d_{0} + d_{2}) (d_{1} + d_{3}) + (d_{4} + d_{6}) (d_{5} + d_{7}) \\ 
&\equiv (d_{0} - d_{2}) (d_{1} - d_{3}) + (d_{4} - d_{6}) (d_{5} - d_{7}) \\ 
&\qquad + 2 (d_{0} d_{3} + d_{2} d_{1} + d_{4} d_{7} + d_{6} d_{5}) \\ 
&\equiv d + (b_{0} b_{3} + b_{2} b_{1}) + (c_{0} c_{3} + c_{2} c_{1}) \\ 
&\equiv 2 \pmod{4}. 
\end{align*}
Therefore, from Lemma~$\ref{lem:2.8}$, we have 
$\beta \overline{\beta} \equiv 4 - 16 \equiv - 12 \pmod{32}$. 
Thus, $\beta \overline{\beta}$ has at least one prime factor of the form $8 k - 3$. 
From the above, if (ii), then 
$$
\beta \overline{\beta} \gamma \overline{\gamma} \in \left\{ 2^{7} p (2 m + 1) \mid p \in P, \: m \in \mathbb{Z} \right\}. 
$$
We can obtain the same conclusion for the case~(iv) by using Lemma~$\ref{lem:2.10}$~(4) and (5). 
\end{proof}

\begin{proof}[Proof of Lemma~$\ref{lem:5.1}$]
We prove (1). 
Let $D_{4 \times 4}(\bm{a}) = D_{4}(\bm{b}) D_{4}(\bm{c}) \beta \overline{\beta} \gamma \overline{\gamma} \in 2 \mathbb{Z}$. 
Then, from Lemma~$\ref{lem:2.7}$, we have $D_{4}(\bm{b}) \in 2 \mathbb{Z}$. 
Thus, $b_{0} + b_{2} \equiv b_{1} + b_{3} \pmod{2}$ from Corollary~$\ref{cor:2.1}$. 
Therefore, from Lemmas~$\ref{lem:5.2}$ and $\ref{lem:5.3}$, 
we obtain (1). 
We prove (2). 
Let $D_{4 \times 4}(\bm{a}) \in 2^{15} \mathbb{Z}_{\rm odd}$. 
Then, from Lemmas~$\ref{lem:5.2}$ and $\ref{lem:5.3}$, 
one of the following cases holds: 
\begin{enumerate}
\item[(i)] $b_{0} \equiv b_{1} \equiv b_{2} \equiv b_{3} \equiv 1 \pmod{2}$, $D_{4}(\bm{b}) D_{4}(\bm{c}) \in 2^{11} \mathbb{Z}_{\rm odd}$ and $\beta \overline{\beta} \gamma \overline{\gamma} \in 2^{4} \mathbb{Z}_{\rm odd}$; 
\item[(ii)] $b_{0} \equiv b_{2} \not\equiv b_{1} \equiv b_{3} \pmod{2}$, $D_{4}(\bm{b}) D_{4}(\bm{c}) \in 2^{11} \mathbb{Z}_{\rm odd}$ and $\beta \overline{\beta} \gamma \overline{\gamma} \in 2^{4} \mathbb{Z}_{\rm odd}$; 
\item[(iii)] $b_{0} + b_{2} \equiv b_{1} + b_{3} \equiv 1 \pmod{2}$, $D_{4}(\bm{b}) D_{4}(\bm{c}) \in 2^{8} \mathbb{Z}_{\rm odd}$ and $\beta \overline{\beta} \gamma \overline{\gamma} \in 2^{7} \mathbb{Z}_{\rm odd}$. 
\end{enumerate}
Therefore, from Lemmas~$\ref{lem:5.4}$--$\ref{lem:5.6}$, 
we obtain (2). 
\end{proof}

\section{Possible numbers}
In this section, 
we determine all possible numbers. 
Lemmas~$\ref{lem:4.1}$ and $\ref{lem:5.1}$ imply that $S \left( {\rm C}_{4}^{2} \right)$ does not include every integer that is not mentioned in Lemmas~$\ref{lem:6.1}$--$\ref{lem:6.3}$. 

\begin{lem}\label{lem:6.1}
For any $m \in \mathbb{Z}$, 
the following are elements of $S({\rm C}_{4}^{2})$: 
\begin{enumerate}
\item[$(1)$] $16 m + 1$; 
\item[$(2)$] $2^{16} (4 m + 1)$; 
\item[$(3)$] $2^{16} (4 m - 1)$; 
\item[$(4)$] $2^{16} (2 m)$. 
\end{enumerate}
\end{lem}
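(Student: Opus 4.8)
The plan is to realise each of the four families by an explicit $\bm{a} = (a_{0}, \dots, a_{15}) \in \mathbb{Z}^{16}$ and to read off $D_{4 \times 4}(\bm{a})$ from the factorisation $D_{4 \times 4}(\bm{a}) = D_{4}(\bm{b}) D_{4}(\bm{c}) \beta \overline{\beta} \gamma \overline{\gamma}$ of Section~2 together with Corollary~\ref{cor:2.1}. The common device is to force all but one of the four determinant factors down to $\pm 1$ or to a prescribed power of $2$, and to let the surviving factor run over the required progression as a free integer parameter moves; consistency among $\bm{b}$, $\bm{c}$ and the $d_{i}$ is governed throughout by Remark~\ref{rem:2.4}.

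For part~(1) I would use the rank-one perturbation of the identity: put $a_{0} = m + 1$ and $a_{g} = m$ for every $g \neq e$, so the group matrix is $m J + I$ with $J$ the all-ones matrix; its eigenvalues are $16 m + 1$ (once) and $1$ (with multiplicity $15$), hence the determinant is $16 m + 1$. This may be double-checked against the factorisation: with this $\bm{a}$ one computes $\bm{b} = (4 m + 1, 4 m, 4 m, 4 m)$, $\bm{c} = (1, 0, 0, 0)$, $\alpha_{0} = 1$ and $\alpha_{1} = \alpha_{2} = \alpha_{3} = 0$, so $D_{4}(\bm{c}) = 1$, $\beta = \gamma = 1$, and by Corollary~\ref{cor:2.1} $D_{4}(\bm{b}) = \{(8 m + 1)^{2} - (8 m)^{2}\} \cdot 1 = 16 m + 1$.

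For parts~(2)--(4) every target number is divisible by $2^{16}$; one checks that (2) and (3) together make up $2^{16} \mathbb{Z}_{\rm odd}$ while (4) is $2^{17} \mathbb{Z}$, so together they are $2^{16}\mathbb{Z}$. The plan is to pick $\bm{a}$ whose $2$-adic shape is rigid enough that the valuations of $D_{4}(\bm{b})$, $D_{4}(\bm{c})$, $\beta \overline{\beta}$ and $\gamma \overline{\gamma}$ add up to exactly $16$ (for (2) and (3)) or to at least $17$ (for (4))---this is precisely what the sharp valuation statements of Lemmas~\ref{lem:3.1}, \ref{lem:3.2}, \ref{lem:5.2} and \ref{lem:5.3} let one control---while one factor stays free. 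Since $2^{16} S(C_{4}^{2})$ is a proper subset of $2^{16}\mathbb{Z}$ (scaling the construction of part~(1) only reaches $2^{16}(16m+1)$), one genuinely needs families in which, say, three of the four factors equal $2^{4}$ and the fourth is $2^{4}$ times a quantity $(s^{2} - t^{2})(u^{2} + v^{2})$ (via Corollary~\ref{cor:2.1}) with a prescribed parity pattern on $s,t,u,v$: taking $s,u$ odd and $t,v$ even realises residue $\equiv 1 \pmod 4$ for part~(2) (for instance $D_{4}\bigl(2(j+1),2j,2j,2j\bigr) = 2^{4}(4j+1)$), taking $s,u$ even and $t,v$ odd realises $\equiv -1 \pmod 4$ for part~(3) (for instance $D_{4}(2,2,2,0) = 2^{4}\cdot 3$, which lies outside $2^{4}S(C_{4})$), and part~(4) is obtained by letting the free factor carry an extra factor of $2$; one then solves the linear system of Remark~\ref{rem:2.4} for the $a_{i}$ to arrange that the remaining three factors really do equal $2^{4}$ (e.g.\ $\bm{c} = (2,0,0,0)$ and suitable $\alpha_{i}$ with $\beta\overline{\beta} = \gamma\overline{\gamma} = 2^{4}$).

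The main obstacle is the surjectivity of these families, together with the $2$-adic and consistency bookkeeping: one must show not merely that the constructed values land in the right residue classes, but that the relevant cofactor $(s^{2} - t^{2})(u^{2} + v^{2})$, subject to the parity constraints forced by the construction, actually runs over \emph{every} integer $\equiv 1 \pmod 4$ (resp.\ $\equiv -1 \pmod 4$, resp.\ every even integer), and that for each such value the four prescribed valuations are simultaneously achievable by a single integer vector $(a_{0}, \dots, a_{15})$ through the congruences of Remark~\ref{rem:2.4}. Once this elementary number theory and the bookkeeping are settled, the congruence and valuation assertions follow directly from Section~2 and Lemmas~\ref{lem:3.1}--\ref{lem:3.2}, \ref{lem:5.2}--\ref{lem:5.3}.
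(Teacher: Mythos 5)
Your part (1) is complete and is essentially the paper's own argument (the paper computes $D_{4\times 4}(m+1,m,\dots,m)=D_{4}(4m+1,4m,4m,4m)D_{4}(1,0,0,0)^{3}=16m+1$; your eigenvalue computation for $mJ+I$ is a harmless variant). For parts (2)--(4), however, there is a genuine gap: you describe the right strategy --- make three of the four factors in $D_{4\times4}(\bm{a})=D_{4}(\bm{b})D_{4}(\bm{c})\beta\overline{\beta}\gamma\overline{\gamma}$ fixed powers of $2$ and let one factor carry the $m$-dependence --- but you never exhibit the integer vectors $\bm{a}\in\mathbb{Z}^{16}$ that realize it, and you explicitly defer exactly the point where the work lies. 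The quantities $\bm{b}$, $\bm{c}$ and $(d_{0},\dots,d_{7})$ cannot be prescribed independently: they must satisfy the congruences $b_{i}\equiv c_{i}\equiv d_{i}+d_{i+4}\pmod 2$ and $b_{i}\pm c_{i}\equiv 2d_{i},2d_{i+4}\pmod 4$ of Remark~2.4, so a sentence like ``take $\bm{c}=(2,0,0,0)$ and suitable $\alpha_{i}$ with $\beta\overline{\beta}=\gamma\overline{\gamma}=2^{4}$'' is a wish, not a construction, until one checks that a single tuple $(a_{0},\dots,a_{15})$ produces all four factors simultaneously for every $m$ (note in particular that the parity of the free parameter leaks into the $d_{i}$ through these congruences, so the check is not vacuous).

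You also misplace the difficulty: no ``surjectivity'' argument about which values $(s^{2}-t^{2})(u^{2}+v^{2})$ can attain is needed. In the paper the free factor is linear in $m$ by design --- e.g.\ for (2) the choice $a_{0}=m+2$, $a_{6}=a_{8}=m+1$, all other $a_{g}=m$ gives $D_{4}(\bm{b})=2^{6}(4m+1)$, $D_{4}(\bm{c})=D_{4}(3,0,-1,0)=2^{6}$ and $\beta\overline{\beta}=\gamma\overline{\gamma}=2^{2}$, whence $2^{16}(4m+1)$ --- so every value of the progression is hit automatically as $m$ runs over $\mathbb{Z}$, and analogous explicit tuples handle (3) and (4). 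Your example $D_{4}(2,2,2,0)=2^{4}\cdot 3$ for case (3) is a single value rather than a one-parameter family, which signals that the construction step is genuinely missing rather than merely unwritten. To repair the proposal you must write down, for each of (2), (3), (4), an explicit $m$-parametrized $\bm{a}$ and verify the four factors; as it stands only item (1) is proved.
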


\begin{lem}\label{lem:6.2}
For any $k \in \mathbb{Z}$, $16 l - 3$, $16 m - 3$, $16 n - 3$, 
$16 l + 5$, $16 m + 5$, $16 n + 5 \in P$, 
the following are elements of $S({\rm C}_{4}^{2})$: 
\begin{enumerate}
\item[$(1)$] $(16 k - 3) (16 l + 5) (16 m - 3) (16 n - 3)$; 
\item[$(2)$] $(16 k - 3) (16 l + 5) (16 m + 5) (16 n + 5)$; 
\item[$(3)$] $(16 k + 5) (16 l - 3) (16 m - 3) (16 n - 3)$; 
\item[$(4)$] $(16 k + 5) (16 l - 3) (16 m + 5) (16 n + 5)$. 
\end{enumerate}
\end{lem}

\begin{lem}\label{lem:6.3}
For any $m \in \mathbb{Z}$ and $p \in P$, 
we have $2^{15} p (2 m + 1) \in S({\rm C}_{4}^{2})$. 
\end{lem}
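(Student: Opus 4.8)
The plan is, for each $m \in \mathbb{Z}$ and $p \in P$, to write down an explicit $\bm{a} = (a_{0}, \dots, a_{15}) \in \mathbb{Z}^{16}$ with $D_{4 \times 4}(\bm{a}) = 2^{15} p (2m+1)$, using the factorization $D_{4 \times 4}(\bm{a}) = D_{4}(\bm{b}) D_{4}(\bm{c}) \beta \overline{\beta} \gamma \overline{\gamma}$ recorded just after Corollary~\ref{cor:2.3}. The structural observation that makes this manageable is that $D_{4}(\bm{b}) D_{4}(\bm{c})$ depends on $\bm{a}$ only through the half-sums $a_{i} + a_{i+8}$ $(0 \le i \le 7)$, while $\beta \overline{\beta} \gamma \overline{\gamma}$ depends only on the half-differences $d_{i}$, and the two families of parameters interact only through the parity condition $a_{i} + a_{i+8} \equiv d_{i} \pmod 2$. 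So I will choose the $d_{i}$ to fix $\beta \overline{\beta} \gamma \overline{\gamma} = 2^{4} p$ (this produces the prime $p$) and then choose the half-sums to get $D_{4}(\bm{b}) D_{4}(\bm{c}) = 2^{11}(2m+1)$ (this produces the free factor), so that the product is $2^{15} p (2m+1)$.

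For the first step: as $p \equiv 5 \pmod 8$, write $p = e^{2} + f^{2}$ with $e$ odd, $f \equiv 2 \pmod 4$, and, replacing $e$ by $-e$ if necessary, with $e \equiv 3 \pmod 4$ (so that $e+f \equiv e-f \equiv 1 \pmod 4$). Then a short explicit tuple $(d_{0}, \dots, d_{7})$ — e.g.\ of the shape $d_{0} = d_{2}+1$, $d_{1} = d_{3}$, $d_{4} = d_{6}+1$, $d_{5} = d_{7}$ with $4 d_{1} + 1 = e+f$ and $4 d_{5} + 1 = e-f$ — makes, via Lemma~\ref{lem:2.5}, one of the four sums of two squares equal to $(e+f)^{2} + (e-f)^{2} = 2p$ and the other three equal to $2$, so that $\beta \overline{\beta} = 4p$, $\gamma \overline{\gamma} = 4$, and $\beta \overline{\beta} \gamma \overline{\gamma} = 2^{4} p$. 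Because $\beta \overline{\beta}$ and $\gamma \overline{\gamma}$ are norms from $\mathbb{Z}[\sqrt{-1}]$, the odd part of $\beta \overline{\beta} \gamma \overline{\gamma}$ is necessarily $\equiv 1 \pmod 4$; hence the prime factor $\equiv -3 \pmod 8$ has to come from this product, and, conversely, the full range of the free factor modulo $4$ has to be supplied by $D_{4}(\bm{b}) D_{4}(\bm{c})$.

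For the second step: the parities of the $d_{i}$ above force all of $b_{0}, \dots, b_{3}, c_{0}, \dots, c_{3}$ to be odd and pin down $b_{i} + c_{i} \bmod 4$ (namely $b_{i}+c_{i} \equiv 2 d_{i} \pmod 4$). By Corollary~\ref{cor:2.1} together with Lemma~\ref{lem:3.2}~(2), an all-odd argument lets $D_{4}$ equal $2^{4} \cdot (\text{any odd})$ in the regime giving $2^{4} \mathbb{Z}_{\rm odd}$ (taking the sum of two squares appearing there to be $2$), and equal $2^{7} \cdot (\text{odd})$ in the regime giving $2^{7} \mathbb{Z}_{\rm odd}$; a product of a $2^{4}$-value and a $2^{7}$-value is $2^{11} \cdot (\text{any odd})$. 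Taking $\bm{b}$ to realize the first value and $\bm{c}$ the second, with the residues of $\bm{b}, \bm{c} \bmod 4$ chosen to satisfy $b_{i} + c_{i} \equiv 2 d_{i} \pmod 4$, yields $D_{4}(\bm{b}) D_{4}(\bm{c}) = 2^{11}(2m+1)$. The two residue classes of $2m+1 \bmod 4$ are obtained from the two natural variants of the $d$-construction: changing the sign of $e$ and moving the factor $2p$ to a different one of the four sums of two squares flips the parity of $d_{0} - d_{2}$, which in turn flips whether $b_{0} \equiv b_{2}$ or $b_{0} \not\equiv b_{2} \pmod 4$ is admissible, hence the residue class modulo $4$ of the odd part of $D_{4}(\bm{b})$.

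The step I expect to be the real obstacle is exactly this last matching. Placing the prime $p$ via the $d_{i}$ simultaneously rigidifies the residues $b_{i} + c_{i} \bmod 4$, and one must verify that, within the appropriate $2$-adic regime of Lemma~\ref{lem:3.2}, enough freedom remains to drive the odd part of $D_{4}(\bm{b}) D_{4}(\bm{c})$ across every class in $\mathbb{Z}_{\rm odd}$ while keeping the $2$-adic valuation of the whole product equal to $15$ and no factor vanishing; this is what forces the split into the two variants. Everything else is a direct computation with Corollaries~\ref{cor:2.1} and~\ref{cor:2.3} and Lemma~\ref{lem:3.2}.
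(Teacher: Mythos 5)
Your overall strategy coincides with the paper's: both arguments use the factorization $D_{4\times 4}(\bm{a})=D_{4}(\bm{b})D_{4}(\bm{c})\beta\overline{\beta}\gamma\overline{\gamma}$, force $\beta\overline{\beta}\gamma\overline{\gamma}=2^{4}p$ out of a representation of $2p$ as a sum of two odd squares, and let $D_{4}(\bm{b})D_{4}(\bm{c})=2^{11}\times(\text{odd})$ supply the free cofactor. The difference is the parity configuration: the paper takes $\bm{b},\bm{c}\equiv(0,1,0,1)\pmod{2}$ (a $2^{6}\cdot 2^{5}$ split) and simply writes down two explicit $16$-tuples, computing $D_{4}(4m+2,4m+1,4m+2,4m-1)D_{4}(2,1,0,-1)\cdot 2^{4}p=2^{15}p(4m+1)$ and a companion tuple giving $2^{15}p(4m-1)$; you propose an all-odd $\bm{b},\bm{c}$ with a $2^{4}\cdot 2^{7}$ split, which is a legitimate configuration (it is case (i) in the proof of Lemma~5.1(2)).

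The problem is that the step you yourself flag as "the real obstacle" is precisely the content of the lemma, and you do not close it. Concretely: once the $d_{i}$ are frozen by your shape ($d_{1}=d_{2}=d_{3}$, $d_{5}=d_{6}=d_{7}$, $d_{0}=d_{1}+1$, $d_{4}=d_{5}+1$, $4d_{1}+1=e+f$, $4d_{5}+1=e-f$), the linkage $c_{i}\equiv 2d_{i}-b_{i}\pmod{4}$ is rigid, and requiring $D_{4}(\bm{c})\in 2^{7}\mathbb{Z}_{\rm odd}$ forces $c_{0}+c_{2}\equiv c_{1}+c_{3}\equiv 0\pmod{4}$, hence $b_{0}\equiv b_{2}$ and $b_{1}\not\equiv b_{3}\pmod{4}$; but then the odd part of $D_{4}(\bm{b})/2^{4}$ is automatically $\equiv 1\pmod{4}$, so your claim that the $2^{4}$-factor sweeps all of $\mathbb{Z}_{\rm odd}$ is false under the constraints, and your proposed remedy (flip the parity of $d_{0}-d_{2}$ by changing the sign of $e$ and relocating $2p$) is never verified to be compatible with keeping all $b_{i},c_{i}$ odd, $\gamma\overline{\gamma}=4$, and the total $2$-adic valuation equal to $15$ (also, in the $2^{4}\mathbb{Z}_{\rm odd}$ regime the sum of two squares in Lemma~3.2(2) is odd, so it cannot "be taken to be $2$"). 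The missing residue class can in fact be recovered without touching the $d_{i}$: inside the $2^{7}$-regime one may choose either $|c_{0}+c_{2}|=4$, $c_{1}+c_{3}=0$ or $c_{0}+c_{2}=0$, $|c_{1}+c_{3}|=4$ (with $c_{0}-c_{2},c_{1}-c_{3}=\pm 2$), making the odd part of $D_{4}(\bm{c})/2^{7}$ equal to $+1$ or $-1$, while $\bm{b}=\bigl((N+1)/2,(N+1)/2,(N+1)/2,(N-3)/2\bigr)$ gives $D_{4}(\bm{b})=2^{4}N$ for any $N\equiv 1\pmod{4}$; together these hit every odd cofactor $\pm N$. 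Until some such verification (or an explicit family of $a_{i}$, as in the paper) is written down, the proposal does not prove the lemma.
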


\begin{proof}[Proof of Lemma~$\ref{lem:6.1}$]
We obtain (1) from 
\begin{align*}
D_{4 \times 4}(m + 1, m, \ldots, m) 
&= D_{4}(4 m + 1, 4 m, 4 m, 4 m) D_{4}(1, 0, 0, 0)^{3} \\ 
&= (8 m + 1)^{2} - (8 m)^{2} \\ 
&= 16 m + 1. 
\end{align*}
We obtain (2) from 
\begin{align*}
&D_{4 \times 4}(m + 2, m, m, m, m, m, m + 1, m, m + 1, m, \ldots, m) \\ 
&\qquad = D_{4}(4 m + 3, 4 m, 4 m + 1, 4 m) D_{4}(3, 0, - 1, 0) D_{4}\left(1, 0, \sqrt{- 1}, 0 \right) D_{4}\left(1, 0, - \sqrt{- 1}, 0 \right) \\ 
&\qquad = \left\{ (8 m + 4)^{2} - (8 m)^{2} \right\} \cdot 2^{2} \cdot 2^{2} \cdot 4^{2} \cdot \left( 1 + \sqrt{- 1} \right)^{4} \left( 1 - \sqrt{- 1} \right)^{4} \\ 
&\qquad = 2^{16} (4 m + 1). 
\end{align*}
We obtain (3) from 
\begin{align*}
&D_{4 \times 4}(m + 1, m, m, m - 1, m, m - 1, m, m, m, m, m, m - 1, m, m - 1, m - 1, m) \\ 
&\quad = D_{4}(4 m + 1, 4 m - 2, 4 m - 1, 4 m - 2) D_{4}(1, 2, 1, - 2) D_{4}\left( 1, 0, \sqrt{- 1}, 0 \right) D_{4}\left( 1, 0, - \sqrt{- 1}, 0 \right) \\ 
&\quad = \left\{ (8 m)^{2} - (8 m - 4)^{2} \right\} \cdot 2^{2} \cdot 2^{2} \cdot 4^{2} \cdot \left( 1 + \sqrt{- 1} \right)^{4} \left( 1 - \sqrt{- 1} \right)^{4} \\ 
&\quad = 2^{16} (4 m - 1). 
\end{align*}
We obtain (4) from 
\begin{align*}
&D_{4 \times 4}(m + 1, m, m, m, m, m, m, m, m + 1, m - 1, m, m, m, m - 1, m, m) \\ 
&\qquad = D_{4}(4 m + 2, 4 m - 2, 4 m, 4 m) D_{4}(2, 0, 0, 0) D_{4}\left( 0, 1 + \sqrt{- 1}, 0, 0 \right) D_{4}(0, 1 - \sqrt{- 1}, 0, 0) \\ 
&\qquad = \left\{ (8 m + 2)^{2} - (8 m - 2)^{2} \right\} \cdot 2^{3} \cdot 2^{2} \cdot 2^{2} \cdot (1 + \sqrt{- 1})^{4} (1 - \sqrt{- 1})^{4} \\ 
&\qquad = 2^{16} (2 m). 
\end{align*}
\end{proof}

\begin{rem}\label{rem:6.4}
When 
\begin{align*}
d_{0} &= 2 t - 2 v, & 
d_{1} &= 2 t + 2 w + 1, & 
d_{2} &= 2 t + 2 v + 2 e, & 
d_{3} &= 2 t - 2 w, \\ 
d_{4} &= 2 u + 2 w + 1, & 
d_{5} &= 2 u + 2 v + 1, & 
d_{6} &= 2 u - 2 w, & 
d_{7} &= 2 u - 2 v, 
\end{align*}
where $e \in \{ 0, 1 \}$, 
from Lemma~$\ref{lem:2.5}$, 
we have 
$$
\beta \overline{\beta} \gamma \overline{\gamma} = \left\{ (8 t + 2 e + 1)^{2} + (8 u + 2)^{2} \right\} \left\{ (8 v + 2 e + 1)^{2} + (8 w + 2)^{2} \right\}. 
$$
\end{rem}

\begin{proof}[Proof of Lemma~$\ref{lem:6.2}$]
We remark that for any $16 m - 8 e + 5, 16 n - 8 e + 5 \in P$ with $e \in \{ 0, 1 \}$, 
there exist $t, u, v, w \in \mathbb{Z}$ satisfying 
\begin{equation}\tag{$\ast$}
\begin{split}
16 m - 8 e + 5 &= (8 t + 2 e + 1)^{2} + (8 u + 2)^{2}, \\ 
16 n - 8 e + 5 &= (8 v + 2 e + 1)^{2} + (8 w + 2)^{2}. 
\end{split}
\end{equation}
We prove (1) and (2). 
For any $16 l + 5 \in P$, 
we can take $r, s \in \mathbb{Z}$ satisfying 
\begin{align*}
16 l + 5 &= (8 r + 1)^{2} + (8 s + 2)^{2}. 
\end{align*}
Also, we take $t, u, v, w$ satisfying ($\ast$). 
Let 
\begin{align*}
a_{0} &= k - r + t - v, &
a_{1} &= k - s + t + w, &
a_{2} &= k + r + t + v + e, \\ 
a_{3} &= k + s + t - w, &
a_{4} &= - k + r + u + w + 1, &
a_{5} &= - k + s + u + v + 1, \\ 
a_{6} &= - k - r + u - w, &
a_{7} &= - k - s + u - v, &
a_{8} &= k - r - t + v, \\ 
a_{9} &= k - s - t - w - 1, &
a_{10} &= k + r - t - v - e, &
a_{11} &= k + s - t + w, \\ 
a_{12} &= - k + r - u - w, &
a_{13} &= - k + s - u - v, &
a_{14} &= - k - r - u + w, \\ 
a_{15} &= - k - s - u + v. 
\end{align*}
Then, from Remark~$\ref{rem:6.4}$, we have 
\begin{align*}
D_{4 \times 4}(\bm{a}) &= D_{4}(1, 0, 0, 0) D_{4}(4 k - 4 r - 1, 4 k - 4 s - 2, 4 k + 4 r, 4 k + 4 s) \beta \overline{\beta} \gamma \overline{\gamma} \\ 
&= \left\{ (8 k - 1)^{2} - (8 k - 2)^{2} \right\} \left\{ (8 r + 1)^{2} + (8 s + 2)^{2} \right\} \\ 
&\qquad \times \left\{ (8 t + 2 e + 1)^{2} + (8 u + 2)^{2} \right\} \left\{ (8 v + 2 e + 1)^{2} + (8 w + 2)^{2} \right\} \\ 
&= (16 k - 3) (16 l + 5) (16 m - 8 e + 5) (16 n - 8 e + 5). 
\end{align*}
We prove (3) and (4). 
For any $16 l - 3 \in P$, 
we can take $r, s \in \mathbb{Z}$ satisfying 
\begin{align*}
16 l - 3 &= (8 r + 3)^{2} + (8 s + 2)^{2}. 
\end{align*}
Also, we take $t, u, v, w$ satisfying ($\ast$). 
Let 
\begin{align*}
a_{0} &= k + r + t - v + 1, &
a_{1} &= k + s + t + w + 1, &
a_{2} &= k - r + t + v + e, \\ 
a_{3} &= k - s + t - w, &
a_{4} &= - k - r + u + w, &
a_{5} &= - k - s + u + v, \\ 
a_{6} &= - k + r + u - w, &
a_{7} &= - k + s + u - v, &
a_{8} &= k + r - t + v + 1, \\ 
a_{9} &= k + s - t - w, &
a_{10} &= k - r - t - v - e, &
a_{11} &= k - s - t + w, \\ 
a_{12} &= - k - r - u - w - 1, &
a_{13} &= - k - s - u - v - 1, &
a_{14} &= - k + r - u + w, \\ 
a_{15} &= - k + s - u + v. 
\end{align*}
Then, from Remark~$\ref{rem:6.4}$, we have 
\begin{align*}
D_{4 \times 4}(\bm{a}) &= D_{4}(1, 0, 0, 0) D_{4}(4 k + 4 r + 3, 4 k + 4 s + 2, 4 k - 4 r, 4 k - 4 s) \beta \overline{\beta} \gamma \overline{\gamma} \\ 
&= \left\{ (8 k + 3)^{2} - (8 k + 2)^{2} \right\} \left\{ (8 r + 3)^{2} + (8 s + 2)^{2} \right\} \\ 
&\: \: \qquad \times \left\{ (8 t + 2 e + 1)^{2} + (8 u + 2)^{2} \right\} \left\{ (8 v + 2 e + 1)^{2} + (8 w + 2)^{2} \right\} \\ 
&= (16 k + 5) (16 l - 3) (16 m - 8 e + 5) (16 n - 8 e + 5). 
\end{align*}
\end{proof}

\begin{proof}[Proof of Lemma~$\ref{lem:6.3}$]
For any $p \in P$, 
there exist $r, s \in \mathbb{Z}$ satisfying 
$2 p = (8 r + 3)^{2} + (8 s + 1)^{2}$. 
Let 
\begin{align*}
a_{0} &= m + r + 1, &
a_{1} &= m + r + 1, &
a_{2} &= m + r + 1, &
a_{3} &= m + r, \\ 
a_{4} &= m + s, & 
a_{5} &= m + s, &
a_{6} &= m + s + 1, &
a_{7} &= m + s, \\ 
a_{8} &= m - r + 1, & 
a_{9} &= m - r, &
a_{10} &= m - r, &
a_{11} &= m - r - 1, \\ 
a_{12} &= m - s, &
a_{13} &= m - s, &
a_{14} &= m - s, &
a_{15} &= m - s. 
\end{align*}
Then, from Lemma~$\ref{lem:2.5}$, 
we have $\beta \overline{\beta} \gamma \overline{\gamma} = 2^{3} \left\{ (8 r + 3)^{2} + (8 s + 1)^{2} \right\} = 2^{4} p$. 
Hence, we have 
\begin{align*}
D_{4 \times 4}(\bm{a}) 
&= D_{4}(4 m + 2, 4 m + 1, 4 m + 2, 4 m - 1) D_{4}(2, 1, 0, - 1) \beta \overline{\beta} \gamma \overline{\gamma} \\ 
&= \left\{ (8 m + 4)^{2} - (8 m)^{2} \right\} \cdot 2^{2} \cdot 2^{2} \cdot (2^{2} + 2^{2}) \cdot 2^{4} p \\ 
&= 2^{15} p (4 m + 1). 
\end{align*}
On the other hand, 
let 
\begin{align*}
a_{0} &= m + r, &
a_{1} &= m + r, &
a_{2} &= m + r + 1, &
a_{3} &= m + r, \\ 
a_{4} &= m + s, & 
a_{5} &= m + s, &
a_{6} &= m + s, &
a_{7} &= m + s - 1, \\ 
a_{8} &= m - r, & 
a_{9} &= m - r - 1, &
a_{10} &= m - r, &
a_{11} &= m - r - 1, \\ 
a_{12} &= m - s, &
a_{13} &= m - s, &
a_{14} &= m - s - 1, &
a_{15} &= m - s - 1. 
\end{align*}
Then, from Lemma~$\ref{lem:2.5}$, 
we have $\beta \overline{\beta} \gamma \overline{\gamma} = 2^{3} \left\{ (8 r + 3)^{2} + (8 s + 1)^{2} \right\} = 2^{4} p$. 
Hence, we have 
\begin{align*}
D_{4 \times 4}(\bm{a}) 
&= D_{4}(4 m, 4 m - 1, 4 m, 4 m - 3) D_{4}(0, - 1, 2, 1) \beta \overline{\beta} \gamma \overline{\gamma} \\ 
&= \left\{ (8 m)^{2} - (8 m - 4)^{2} \right\} \cdot 2^{2} \cdot 2^{2} \cdot (2^{2} + 2^{2}) \cdot 2^{4} p \\ 
&= 2^{15} p (4 m - 1). 
\end{align*}
\end{proof}

From Lemmas~$\ref{lem:4.1}$, $\ref{lem:5.1}$ and $\ref{lem:6.1}$--$\ref{lem:6.3}$, 
Theorem~$\ref{thm:1.1}$ is proved.

\clearpage

\bibliography{reference}

\begin{thebibliography}{10}

\bibitem{MR3879399}
Ton Boerkoel and Christopher Pinner.
\newblock Minimal group determinants and the {L}ind-{L}ehmer problem for
  dihedral groups.
\newblock {\em Acta Arith.}, 186(4):377--395, 2018.

\bibitem{MR1510814}
Otto H\"{o}lder.
\newblock Die {G}ruppen der {O}rdnungen {$p^3$}, {$pq^2$}, {$pqr$}, {$p^4$}.
\newblock {\em Math. Ann.}, 43(2-3):301--412, 1893.

\bibitem{MR4363104}
Bishnu Paudel and Chris Pinner.
\newblock Integer circulant determinants of order 15.
\newblock {\em Integers}, 22:Paper No. A4, 21, 2022.

\bibitem{https://doi.org/10.48550/arxiv.2211.09930}
Bishnu Paudel and Christopher Pinner.
\newblock The group determinants for $\mathbb{Z}_n \times {H}$, 2022.
\newblock arXiv:2211.09930v3 [math.NT].

\bibitem{https://doi.org/10.48550/arxiv.2302.11688}
Bishnu Paudel and Christopher Pinner.
\newblock The integer group determinants for ${Q}_{16}$, 2023.
\newblock arXiv:2302.11688v1 [math.NT].

\bibitem{MR4056860}
Christopher Pinner and Christopher Smyth.
\newblock Integer group determinants for small groups.
\newblock {\em Ramanujan J.}, 51(2):421--453, 2020.

\bibitem{https://doi.org/10.48550/arxiv.2203.14420}
Naoya Yamaguchi and Yuka Yamaguchi.
\newblock Generalized {D}edekind's theorem and its application to integer group
  determinants, 2022.
\newblock arXiv:2203.14420v2 [math.RT].

\bibitem{https://doi.org/10.48550/arxiv.2202.06952}
Naoya Yamaguchi and Yuka Yamaguchi.
\newblock Remark on {L}aquer's theorem for circulant determinants.
\newblock {\em International Journal of Group Theory}, 12(4):265--269, 2023.

\bibitem{Yamaguchi}
Yuka Yamaguchi and Naoya Yamaguchi.
\newblock Integer circulant determinants of order 16.
\newblock {\em The Ramanujan Journal}, 2022.

\bibitem{https://doi.org/10.48550/arxiv.2211.14761}
Yuka Yamaguchi and Naoya Yamaguchi.
\newblock Integer group determinants for abelian groups of order 16, 2022.
\newblock arXiv:2211.14761v1 [math.NT].

\bibitem{https://doi.org/10.48550/arxiv.2209.12446}
Yuka Yamaguchi and Naoya Yamaguchi.
\newblock Integer group determinants for ${{\rm C}}_{2}^{4}$, 2022.
\newblock arXiv:2203.14420v3 [math.RT].

\bibitem{yamaguchi2023integer}
Yuka Yamaguchi and Naoya Yamaguchi.
\newblock Integer group determinants for ${{\rm C}}_{2}^{2} \rtimes {{\rm
  C}}_{4}$, 2023.
\newblock arXiv:2303.08489v2 [math.NT].

\bibitem{MR1505615}
J.~W.~A. Young.
\newblock On the {D}etermination of {G}roups {W}hose {O}rder is a {P}ower of a
  {P}rime.
\newblock {\em Amer. J. Math.}, 15(2):124--178, 1893.

\end{thebibliography}
\bibliographystyle{plain}

\medskip
\begin{flushleft}
Faculty of Education, 
University of Miyazaki, 
1-1 Gakuen Kibanadai-nishi, 
Miyazaki 889-2192, 
Japan \\ 
{\it Email address}, Yuka Yamaguchi: y-yamaguchi@cc.miyazaki-u.ac.jp \\ 
{\it Email address}, Naoya Yamaguchi: n-yamaguchi@cc.miyazaki-u.ac.jp 
\end{flushleft}

\end{document}